\numberwithin{equation}{section}
\definecolor{darkgreen}{rgb}{0,0.45,0} 
 \definecolor{lightgrey}{rgb}{0.666666,0.666666,0.666666}
\renewcommand{\epsilon}{\varepsilon}
\renewcommand{\phi}{\varphi}
\newcommand{\cb}{\ensuremath{\mathcal B}\xspace}
\newcommand{\cc}{\ensuremath{\mathcal C}\xspace}
\newcommand{\cd}{\ensuremath{\mathcal D}\xspace}
\newcommand{\cm}{\ensuremath{\mathcal M}\xspace}
\newcommand{\ct}{\ensuremath{\mathcal T}\xspace}
\newcommand{\bz}{\ensuremath{\mathbb Z}\xspace}
\newcommand{\Mod}{\ensuremath{\mathsf{Mod}}\xspace }
\newcommand{\Span}{\ensuremath{\mathsf{Span}}\xspace}
\newcommand{\Cat}{\ensuremath{\mathsf{Cat}}\xspace}
\newcommand{\Set}{\ensuremath{\mathsf{Set}}\xspace}
\newcommand{\Prof}{\ensuremath{\mathsf{Prof}}\xspace}
\newcommand{\Vect}{\ensuremath{\mathsf{vec}}\xspace}
\newcommand{\co}{\ensuremath{^{\textnormal{co}}}}
\newcommand{\op}{\ensuremath{^{\textnormal{op}}}}
\newcommand{\rev}{\ensuremath{^{\textnormal{rev}}}}
\newcommand{\coop}{\ensuremath{^{\textnormal{co,op}}}}
\newcommand{\oprev}{\ensuremath{^{\textnormal{op,rev}}}}
\newcommand{\du} {\ensuremath{_{\textnormal{d}}}}
\newcommand{\oprevdl}{\ensuremath{^{\textnormal{op\, rev}}_{\ \ \textnormal{d}}}}
\DeclareMathOperator{\End}{End}
\newcommand{\dual}[1]{{\overline{#1}}}
\newcommand{\ant}{\sigma}
\newcommand{\ld}[1]{\tensor[^\bot]{#1}{}}
\newcommand{\ox}{\otimes}
\newcommand{\x}{\times}
\renewcommand{\phi}{\varphi}
\def\1c#1{\stackrel{#1}{\to}}
  \newtheorem{proposition}{Proposition}[section]
  \newtheorem{lemma}[proposition]{Lemma}
  \newtheorem{theorem}[proposition]{Theorem}
  \theoremstyle{definition}
  \newtheorem{example}[proposition]{Example}
  \newtheorem{claim}[proposition]{Construction}
\theoremstyle{remark}
  \newtheorem{remark}[proposition]{Remark}
  \newcounter{c}
  \newcommand{\etyk}[1]{\vspace{-7.4mm}$$\begin{equation}\Label{#1}
  \addtocounter{c}{1}}
  \renewcommand{\]}{\ifnum \value{c}=1 $$\else \end{equation}\fi}
\begin{document}

 \title{Hopf comonads on naturally Frobenius map-monoidales}

\author{Gabriella B\"ohm} 
\address{Wigner Research Centre for Physics, H-1525 Budapest 114,
P.O.B.\ 49, Hungary}
\email{bohm.gabriella@wigner.mta.hu}
\author{Stephen Lack}
\address{Department of Mathematics, Macquarie University NSW 2109, Australia}
\email{steve.lack@mq.edu.au}

\date{Nov 2014}

\begin{abstract}
We study monoidal comonads on a naturally Frobenius map-monoidale $M$ in a
monoidal bicategory $\mathcal M$. We regard them as bimonoids in the duoidal
hom-category $\mathcal M(M,M)$, and generalize to that setting various
conditions distinguishing classical Hopf algebras among bialgebras; in
particular, we define a notion of antipode in that context. Assuming the
existence of certain conservative functors and the splitting of idempotent
2-cells in $\mathcal M$, we show all these Hopf-like conditions to be
equivalent. Our results imply in particular several equivalent
characterizations of Hopf algebras in braided monoidal categories, of small
groupoids, of Hopf algebroids over commutative base algebras, of weak Hopf
algebras, and of Hopf monads in the sense of Brugui\`eres and Virelizier. 
\end{abstract}
  
\maketitle


\section{Introduction}

Classical {\em bialgebras} (say, over a field $k$) are the same as comonoids 
in the monoidal category of $k$-algebras; that is, in the
monoidal category of monoids in the category \Vect of vector spaces over $k$.
They are also the same as monoids in the monoidal category of 
$k$-coalgebras (that is, of comonoids in \Vect).  A {\em Hopf algebra} is then 
a bialgebra $A$ admitting a further map called the {\em antipode}, 
which is the convolution inverse of the identity map $A\to A$. Since an 
inverse is unique whenever it exists, its existence is a property rather than 
an additional structure. In fact, this property has a number of equivalent 
reformulations; all of them of different conceptual meaning. For instance,  
$A$ is known to be a Hopf algebra if and only if the monad $A\ox-$ on
$\Vect$, defined using the algebra structure, is a left Hopf monad in
the sense of \cite{BLV:HopfMonad}; equivalently, if the monad $-\ox A$ is a
right Hopf monad. This is further equivalent to the comonad $A\ox-$, defined
using the coalgebra structure, being a left Hopf comonad, and also to the
comonad $-\ox A$ being a right Hopf comonad. (In each case, the monad or
comonad is Hopf in the two-sided sense just when the antipode is invertible.)
Then again, $A$ is Hopf if and only if the fundamental theorem of Hopf modules
holds, meaning that the category of Hopf modules over $A$ is equivalent to the 
category of vector spaces. Finally, $A$ is Hopf if and only if $A$ is an 
$A$-Galois extension of the base field, or equivalently an $A$-Galois 
coextension. 

Replacing the category of vector spaces above with any braided monoidal
category, one still can define bialgebras (or bimonoids) as monoids in the
monoidal category of comonoids, equivalently, as comonoids in the monoidal
category of monoids. Still more generally, the monoid and comonoid structures
can be defined with respect to different, but appropriately related, monoidal
structures. Categories with such structure were considered in
\cite{AguiarMahajan-monoidal} under the original name {\em 2-monoidal
category} though since then the term {\em duoidal category} (suggested in
\cite{Street-chaire}) seems to be more widely used. A duoidal category is
equipped with two monoidal structures which are compatible, in the sense that
the functors and natural transformations describing the first monoidal
structure, are monoidal with respect to the second monoidal
structure. Equivalently, the functors and natural transformations describing
the second monoidal structure are opmonoidal with respect to the first
monoidal structure. (For a more restrictive notion, where these monoidal
structures are required to preserve the unit strictly, see
\cite{BFSV:ItMonCat}; when the monoidal functors are strong we recover the  
notion of braided monoidal category \cite{JoyalStreet-braided}.) 
 
The first monoidal structure $\circ$ in a duoidal category lifts to the
category of monoids with respect to the second monoidal structure $\bullet$ and
so one can define a {\em bimonoid} as a comonoid in this monoidal category of
monoids. Symmetrically, the monoidal structure $\bullet$ lifts to the category
of comonoids with respect to the monoidal structure $\circ$ and a monoid in
this monoidal category of comonoids yields an equivalent definition of
bimonoid \cite{AguiarMahajan-monoidal}.

There seems to be no consensus, however, on how to define a {\em Hopf monoid} 
in a duoidal category. There are several approaches in the literature: Street 
in \cite{Street-chaire} investigated the invertibility of a canonical
morphism associated to a bimonoid. In \cite{BohmChenZhang}, the relationship
between  the Hopf property of the induced bimonad, an appropriate Galois 
condition, and validity of the fundamental theorem of Hopf modules on a
bimonoid is analyzed. (For discussion of a similar question
see also \cite{AguiarChase:GenHopfMod}.) None of these, however, involved a
notion of antipode. 

Examples of bimonoids in duoidal categories include bimonoids in braided
monoidal categories \cite{Majid}, small categories
\cite{AguiarMahajan-monoidal}, bialgebroids over commutative base algebras
(such that the source and target maps land in the center)
\cite{AguiarMahajan-monoidal}, weak bialgebras \cite{BohmSpanish-CatWkBialg},
as well as opmonoidal monads (so-called bimonads) and monoidal comonads
(so-called bicomonads) on monoidal categories with left and right duals
\cite{BV}. In these motivating examples the existence of a (suitably defined)
antipode turns out to be equivalent to the aforementioned Hopf-like
properties; and the main aim of this paper is to find a conceptual explanation
of this common feature. With this motivation, we study a particular class of
duoidal categories, large enough to include the key examples, and prove that
for these duoidal categories all the Hopf-like conditions seen in the examples
are equivalent.  

The duoidal categories in question have the following form. Consider a
monoidal bicategory $\mathcal M$. It was observed in \cite{Street-chaire} that
if $M$ is a {\em map-monoidale} (i.e. map-pseudomonoid) in \cm, then the
convolution product yields a second monoidal structure on the monoidal
hom-category $\mathcal M(M,M)$ rendering it a duoidal category. A bimonoid
therein is precisely the same as a monoidal comonad on $M$ with respect to the
convolution product. We make the additional assumption on $M$ that it is {\em
naturally Frobenius} \cite{Nacho-FormalHopfAlgebraI,dualsinvert}; that is, its
monoidale and dual comonoidale structures satisfy the Frobenius compatibility
relations. Then $M$ becomes a self-dual object in $\mathcal M$ and taking
mates under this duality defines an equivalence $\mathcal M(M,M) \to \mathcal
M(M,M)$. (The condition of being naturally Frobenius was shown in
\cite{Nacho-FormalHopfAlgebraI} to be equivalent to a ``theorem of Hopf
modules'', albeit of a different type to that which we consider below.)  

We then define the  {\em antipode} for a bimonoid $a$ in the duoidal
category $\mathcal M(M,M)$ to be a 2-cell from $a$ to its image $a^-$ under 
this equivalence. We explain in Theorem~\ref{thm:antipode} the 
sense in which the antipode is a ``convolution inverse'' of the identity 
2-cell $a\to a$, analogously to the case of classical bialgebras. 
Whenever an antipode exists, it is unique and a morphism of monoids and of 
comonoids (cf. Theorem \ref{thm:antipode_(co)monoid_map}). 

Generalizing the equivalent characterizations of a Hopf algebra over a field, 
for any naturally Frobenius map-monoidale $M$ in a monoidal bicategory
$\mathcal M$, and any monoidal comonad $a$ on $M$, we prove in Theorem
\ref{thm:antipode} and Theorem \ref{thm:Hopf} the equivalence of the following 
properties: 
\begin{itemize}
\item $a$ admits an antipode,
\item $a$ is a Hopf monad in $\mathcal M$ (in the sense of \cite{CLS}),
\item $a$ is a Hopf comonad in $\mathcal M$ (in the dual sense).
\end{itemize} 
Under the further assumptions of the existence of certain conservative
functors to $\mathcal M(M,M)$ (called the {\em well-(co)pointedness} of $M$)
and the splitting of idempotent 2-cells in \cm, we prove in Theorem
\ref{thm:coGalois}, Theorem \ref{thm:Galois}, Theorem \ref{thm:fthm}, and
Theorem \ref{thm:dualfthm} that the above properties are further equivalent to
the following ones: 
\begin{itemize}
\item $a$ is an $a$-Galois extension of the unit $j$ of the convolution
  product (in the sense of invertibility of a canonical morphism), 
\item $a$ is an $a$-Galois coextension of the unit $i$ of the composition (in
  the dual sense), 
 \item the fundamental theorem of Hopf modules holds for $a$; that is, the
  category of $a$-Hopf modules is equivalent to the category of $j$-comodules,
\item the dual fundamental theorem of Hopf modules holds for $a$; that is, the
  category of $a$-Hopf modules is equivalent to the category of $i$-modules.
\end{itemize}
Applying these conditions to a bimonoid in a braided monoidal category
(regarded as a monoidal comonad on a suitable naturally Frobenius
map-monoidale), we re-obtain the equivalent characterizations of a Hopf monoid
in \cite[Theorem 3.6]{Vercruysse}. Applying these conditions to a small
category $a$ (regarded as a monoidal comonad on a suitable naturally Frobenius
map-monoidale), all of them are equivalent to $a$ being a groupoid. Applying
these conditions to a bialgebroid $a$ over a commutative algebra (regarded as
a monoidal comonad on a suitable naturally Frobenius map-monoidale), all of
them are equivalent to $a$ being a Hopf algebroid
\cite{Ravenel,Bohm:HoA}. Applying these conditions to a weak bialgebra $a$
(regarded \cite{BohmSpanish-CatWkBialg} as a monoidal comonad on a suitable
naturally Frobenius map-monoidale), all of them are equivalent to $a$ being a
weak Hopf algebra \cite{BNSz:WHAI}.  Finally, applying them to a monoidal
comonad on a monoidal category with left and right duals, seen as a monoidal
comonad in the monoidal bicategory $\Prof$, we recover the notion of (left)
antipode of \cite{BV}.

\subsection*{Acknowledgement}
We wish to express our thanks to Ross Street and Ignacio L\'opez Franco for
their highly useful comments on this work. 
We gratefully acknowledge the financial support of the Hungarian Scientific
Research Fund OTKA (grant K108384), the Australian Research Council Discovery
Grant (DP130101969), an ARC Future Fellowship (FT110100385), and the Nefim
Fund of Wigner RCP. We are each grateful to the warm hospitality of the
other's institution during research visits in Nov-Dec 2013 (Sydney) and
Sept-Oct 2014 (Budapest).   

\section{Naturally Frobenius map-monoidales}

\subsection{Monoidal bicategories}

We work in a monoidal bicategory \cm \cite{GPS,DS:Hopf-algebroid}. By the
coherence theorem of \cite{GPS}, we may write as if \cm were a Gray-monoid:
this is a 2-category equipped with a strictly associative and unital tensor
product, but which may not be strictly functorial.

We denote tensor products by juxtaposition, and the unit by $I$. We write
$M^n$ for the $n$-fold tensor power of an object $M$. The composite of
morphisms $f\colon M\to N$ and $g\colon N\to P$ will generally be denoted by
$g.f$ while the identity morphism on an object $M$ will be written as $1$ or 
$M$, whichever seems clearer in the particular context. 

A morphism $f\colon M\to N$ in a bicategory is sometimes called a {\em map} if
it has a right adjoint. In this case, we generally write $f^*$ for the right
adjoint, and write $\eta_f\colon 1\to f^*.f$ and $\epsilon_f\colon f.f^*\to 1$
for the unit and counit of the adjunction.

For a bicategory \cm, monoidal or otherwise, we write $\cm\op$ for the
bicategory obtained by formally reversing the 1-cells, and $\cm\co$ for the
bicategory obtained by formally reversing the 2-cells, with $\cm\coop$ given
by reversing both. For a monoidal bicategory \cm, we write $\cm\rev$ for the
monoidal bicategory obtained by formally reversing the tensor product.

If $f\dashv f^*$ in \cm, then $f^*\dashv f$ in both $\cm\op$ and $\cm\co$,
while $f\dashv f^*$ in $\cm\coop$. 

\subsection{Monoidales}

A {\em monoidale} (also known as {\em pseudomonoid}) in the monoidal
bicategory \cm consists of an object $M\in\cm$ equipped with 1-cells 
$$\xymatrix@R1pc {
MM \ar[r]^-{m} & M && I \ar[r]^-{u} & M }$$
and  invertible 2-cells in the following diagrams
$$
\xymatrix{
MMM \ar[r]^{m1}_{~}="1" \ar[d]_-{1m} & MM \ar[d]^-{m} 
&&
M \ar[r]^{u1}_{~}="5" \ar@{}[dr]|{~}="6" \ar@/_1pc/[dr]_-{1}  & 
MM \ar[d]^-{m} & M \ar[l]_{1u}^{~}="3" \ar@{}[dl]|{~}="4" 
\ar@/^1pc/[dl]^-{1} \\
MM \ar[r]_{m}^{~}="2" & M 
\ar@{=>}"1";"2"^{\alpha}
&&
& M 
\ar@{=>}"3";"4"_{\rho}
\ar@{=>}"5";"6"_{\lambda} 
}
$$
satisfying coherence conditions like those in the definition of monoidal
category \cite{CWM}. A monoidale in \Cat is just a monoidal
category.

We shall generally leave the 2-cells un-named, and simply speak of a monoidale
$(M,m,u)$ or even just $M$. 

\subsection{Map-monoidales}\label{sec:map_monoidale}

A {\em map-monoidale} is a monoidale $(M,m,u)$ for which $m$ and $u$ have
right adjoints $m^*$ and $u^*$. In this case, the associativity isomorphism
for $m$ induces a coassociativity isomorphism  for $m^*$, and similarly $u^*$
is a counit; thus a map-monoidale $(M,m,u)$ in \cm can equally be seen as a
map-monoidale $(M,m^*,u^*)$ in $\cm\op$ or $\cm\oprev$. We shall write
$(M,m,u)^*$, or simply $M^*$, for $(M,m,u)$ seen as a map-monoidale in
$\cm\oprev$.

\subsection{Naturally Frobenius map-monoidales}

We can consider further compatibility conditions between the monoidal and
comonoidal structures on a map-monoidale $M$. The {\em mates} of the
associativity isomorphism $\alpha\colon m.m1\cong m.1m$ and of 
its inverse $\alpha^{-1}$ are the 2-cells $\pi$ and $\pi'$  
$$\xymatrix{
& MMM \ar[r]^-{m1}  \ar[dr]^-{1m} & 
MM \ar[dr]^-{m} \ar[rr]^{1}_{~}="1" \ar@{=>}"1";[dr]^-{\eta_m}
\ar@{=>}[d]^\alpha & & 
MM \\
MM \ar[ur]^-{1m^*} \ar[rr]_1^{~}="2"  \ar@{=>}[ur];"2"^-{1\epsilon_m} && 
MM \ar[r]_-{m} & 
M \ar[ur]_-{m^*} 
}$$
$$\xymatrix{
& MMM \ar[r]^-{1m}  \ar[dr]^-{m1} & 
MM \ar[dr]^-{m} \ar[rr]^{1}_{~}="1" \ar@{=>}"1";[dr]^-{\eta_m} 
\ar@{=>}[d]^-{\alpha^{-1}} & & MM \\
MM \ar[ur]^-{m^*1} \ar[rr]_1^{~}="2"  \ar@{=>}[ur];"2"^{\epsilon_m1} && MM 
\ar[r]_-{m} & M \ar[ur]_-{m^*} 
}$$
obtained by pasting $\alpha$ and its inverse with the unit $\eta_m$ and counit
$\epsilon_m$ of the adjunction $m\dashv m^*$. 

When $\pi$ and $\pi'$ are invertible, the map-monoidale is said to be {\em
naturally Frobenius} \cite{dualsinvert}. 
If $(M,m,u)$ is a naturally Frobenius map-monoidale in \cm, then $(M,m,u)^*$
is a naturally Frobenius map-monoidale in $\cm\oprev$.  

\section{Duoidal categories arising from map-monoidales}

\subsection{Comonads in bicategories}\label{sec:comonads}

If \cm is a bicategory and $M$ an object of \cm, then the hom-category
$\cm(M,M)$ has a monoidal structure given by horizontal 
composition in \cm. We write $\circ$ for the reverse of this tensor product,
and $i$ for the unit (given by the identity morphism $M\to M$). Thus $f\circ
g$ denotes the composite 
$$
\xymatrix{
M \ar[r]^{f} & M \ar[r]^{g} & M. }
$$

A comonoid in $\cm(M,M)$ is the same as a comonad in \cm on the object $M$. It
consists of a morphism $a\colon M\to M$, equipped with 2-cells $\delta\colon
a\to a\circ a$ and $\epsilon\colon a\to i$ satisfying the usual
coassociativity and counit conditions.

Comonads in \cm are the same as comonads in $\cm\op$.

\subsection{Monoidal morphisms in monoidal bicategories}
\label{sec:monoidal_morphisms}

Now suppose that \cm is a monoidal bicategory. 

If $(M,m,u)$ and $(N,n,v)$ are monoidales in \cm, a {\em monoidal morphism}
from $(M,m,u)$ to $(N,n,v)$ consists of a morphism $a\colon M\to N$ in \cm
equipped with 2-cells 
$$\xymatrix{
MM \ar[r]^{aa}_{~}="1" \ar[d]_-{m} & NN \ar[d]^-{n} &
I \ar[d]_-{u} \ar@{=}[r]_{~}="3" & I \ar[d]^-{v} \\
M \ar[r]_{a}^{~}="2" & N & M \ar[r]_{a}^{~}="4" & N 
\ar@{=>}"1";"2"^{a_2}
\ar@{=>}"3";"4"^{a_0} 
}$$
satisfying associativity and unit conditions analogous to those for a monoidal
functor \cite{CWM}; indeed a monoidal morphism in \Cat is just a
monoidal functor between the corresponding monoidal categories.

If $(M,m,u)$ and $(N,n,v)$ are map-monoidales, then various further phenomena
arise. Pasting $a_2$ and $a_0$ with the counits $\epsilon_m\colon
m.m^*\to1$ and $\epsilon_u\colon u.u^*\to1$ of the adjunctions $m\dashv m^*$
and $u\dashv u^*$ gives 2-cells 
\begin{equation}\label{eq:mu-eta}
\xymatrix{
MM \ar[r]^{aa}_{~}="1" & NN \ar[d]^-{n} &
I \ar@{=}[r]_{~}="3" & I \ar[d]^-{v} \\
M \ar[u]^-{m^*} \ar[r]_{a}^{~}="2" & N & M \ar[u]^-{u^*} \ar[r]_{a}^{~}="4" & N 
\ar@{=>}"1";"2"^-{\mu}
\ar@{=>}"3";"4"^-{\eta} 
}
\end{equation}
and this sets up a bijection between pairs of 2-cells $a_2$ and $a_0$ and
pairs of 2-cells $\mu$ and $\eta$. The associativity and unit conditions on
$a_2$ and $a_0$ can be expressed in terms of $\mu$ and $\eta$, and the result
can be expressed in a particularly simple way.  

To do this, first observe that  $\cm(M,N)$ has a convolution monoidal
structure, with tensor product $x\bullet y$ of $x$ and $y$  given by the
composite  
$$\xymatrix{
M \ar[r]^-{m^*} & MM \ar[r]^-{xy} & NN \ar[r]^-{n} & N }$$
while the unit $j$ is the composite 
$$\xymatrix{
M \ar[r]^-{u^*} & I \ar[r]^-{v} & N .}$$

A monoid in $\cm(M,N)$ consists of a morphism $a\colon M\to N$ equipped with
2-cells $\mu$ and $\eta$ as in \eqref{eq:mu-eta} satisfying associativity and
unit conditions which say precisely that the corresponding $a_2$ and $a_0$
make $a$ into a monoidal morphism $(a,a_2,a_0)$ from $(M,m,u)$ to $(N,n,v)$.  

The 2-cell $a_2\colon n.aa\to a.m$ is obtained by pasting the unit $\eta_m$
of $m\dashv m^*$ onto the left of $\mu\colon n.aa.m^*\to a$; if instead we
pasted the unit $\eta_n$ of $n\dashv n^*$  onto the right, we would obtain a
2-cell $a^2\colon aa.m^*\to n^*.a$. Similarly, pasting the unit $\eta_v$ of
$v\dashv v^*$ onto $\eta$ gives a 2-cell $a^0\colon u^*\to v^*.a$, and the
associativity and unit conditions for $\mu$ and $\eta$ say precisely that
$a^2$ and $a^0$ make $a$ into a monoidal morphism $(a,a^2,a^0)\colon
(N,n^*,v^*)\to(M,m^*,u^*)$ in $\cm\oprev$. 

\subsection{Monoidal comonads and duoidal categories}

Now specialize to the case of a single map-monoidale 
$(M,m,u)=(N,n,v)$ in a monoidal bicategory $\cm$. Then $\cm(M,M)$
has two monoidal structures, with tensor products $\circ$ and $\bullet$ 
as in Sections \ref{sec:comonads} and \ref{sec:monoidal_morphisms},
respectively, which we call the {\em composition} and {\em
convolution} monoidal structures.  

We know that a comonad on $M$ is the same as a comonoid in $\cm(M,M)$ with
respect to composition, and we know that a monoidal endomorphism of $M$ is 
the same as a monoid in $\cm(M,M)$ with respect to convolution. A monoidal 
comonad on $M$ is an endomorphism $a\colon M\to M$ equipped with both a 
comonad structure and monoidal structure, and with compatibility conditions 
between the two requiring the comultiplication and counit to be monoidal 
 2-cells. How can this compatibility be expressed in terms of
$\cm(M,M)$?  

To do this, we use the notion of `2-monoidal category' introduced in
\cite{AguiarMahajan-monoidal}; following Street, however, we use the name {\em
duoidal category} for such a structure. This involves two monoidal structures
$(\cd,\bullet,j)$ and $(\cd,\circ,i)$ on the same category, along with
morphisms
$$
\xymatrix @R0pc {
(w\circ x)\bullet(y\circ z) \ar[r]^{\xi_{w,x,y,z}} & 
(w\bullet y)\circ(x\bullet z) \\
j \ar[r]^{\xi^0} & j\circ j \\
i\bullet i \ar[r]^{\xi_0} & i \\
j \ar[r]^{\xi^0_0} & i }
$$
(natural in $w,x,y,z$) subject to  the following axioms. The datum
$(\circ,\xi,\xi^0)$ is a monoidal functor with respect to the monoidal product
$\bullet$, and the unit and associativity isomorphisms of the $\circ$-product
are $\bullet$-monoidal natural transformations. Equivalently,
$(\bullet,\xi,\xi_0)$ is an opmonoidal functor with respect to the monoidal
product $\circ$, and the unit and associativity isomorphisms of the
$\bullet$-product are $\circ$-opmonoidal natural transformations. More
succinctly, a duoidal category is a monoidale (or pseudo-monoid)
in the 2-category $\mathsf{OpMon}$ of monoidal categories, opmonoidal
functors, and opmonoidal natural transformations. Examples arise via the
``looping principle'' (see \cite[Appendix C]{AguiarMahajan-monoidal}): as
hom-categories $\cc(X,X)$, for any object $X$ in a category $\cc$ enriched in
$\mathsf{OpMon}$. For more details see \cite{AguiarMahajan-monoidal}.  

A key observation of \cite{AguiarMahajan-monoidal} was that it is possible to 
define bialgebras internal to a duoidal category: these have a coalgebra 
structure with respect to $\circ$, an algebra structure with respect to 
$\bullet$, and compatibility conditions between the two, expressed using the 
various maps $\xi$ listed above.

Now in any monoidal bicategory $\cm$, the full sub-bicategory whose 
objects are the map-monoidales (and hence its opposite bicategory), is in fact 
$\mathsf{OpMon}$-enriched. The monoidal structure $\bullet$ of $\cm(M,N)$, for 
map-monoidales $M$ and $N$, was discussed in Section 
\ref{sec:monoidal_morphisms}. The composition $\cm(N,P) \times \cm(M,N)\to 
\cm(M,P)$ (and hence its opposite $\circ$), as well as the unit $1\to 
\cm(M,M)$ are opmonoidal functors, and the coherence natural isomorphisms are 
opmonoidal, with respect to $\bullet$. Thus by the ``looping principle'', 
$\cm(M,M)$ is duoidal for the two monoidal structures introduced above; 
this is essentially the example discussed in \cite[Section~4.6]{Street-chaire}. 
The map $\xi^0_0\colon j\to i$ is the counit $\epsilon_u$ of the adjunction 
$u\dashv u^*$, while $\xi^0\colon j\to j\circ j$ is the comultiplication of the 
induced comonad, and $\xi_0$ is the counit $\epsilon_m$ of the adjunction 
$m\dashv m^*$. Finally $\xi_{w,x,y,z}$ is formed as in the diagram 
$$
\xymatrix{
M \ar[r]^-{m^*} & M^2 \ar[r]^-{wy} & M^2 \ar[r]^-{m}
\ar@/_2pc/@{=}[rr]^{~}="1" 
\ar@{=>}"1";[r]_-{\eta_m}  & M \ar[r]^-{m^*} & M^2 \ar[r]^-{xz} 
& M^2 \ar[r]^-{m} & M}
$$
in which $\eta_m$  is the unit of the adjunction $m\dashv m^*$.

Now a bialgebra in the duoidal category $\cm(M,M)$ is precisely a monoidal
comonad in \cm on the monoidale $(M,m,u)$, hence it induces a monoidal comonad
on $\cm(M,M)$ with respect to the monoidal structure involving $\bullet$: see
\cite{Street-chaire} once again.  

\begin{example}\label{ex:trivial}
The unit of any monoidal category has a trivial monoid and comonoid
structure. In particular, the unit object $i$ for the $\circ$-monoidal
structure has a trivial comonoid structure with respect to $\circ$; but in a
duoidal category, $i$ is also a monoid for the $\bullet$-monoidal structure
via $\xi_0$ and $\xi^0_0$, and the compatibility conditions hold, 
so that $i$ is in fact a bialgebra. We call it the {\em $\circ$-trivial
bialgebra}.   

Similarly, $j$ is a bialgebra with the $\bullet$-monoid structure being
trivial; we call it the {\em $\bullet$-trivial bialgebra}.  
\end{example}

\begin{remark}\label{rmk:double} 
A {\em double algebra} in the sense of \cite{Szlachanyi-double} involves two 
monoid structures subject to certain equations relating the two structures. 
Similarly a duoidal category involves two monoidal structures with various 
structure relating them. Thus one could ask to what extent the axioms of 
\cite{Szlachanyi-double} hold for duoidal categories. One of these axioms, 
translated into our notation, says that $((a\bullet i)\circ j)\bullet b=
(a\bullet i)\circ b$ for any elements $a$ and $b$ of the double algebra. For 
any two objects $a$ and $b$ of a duoidal category, there is a natural map  
$$
\xymatrix @C=8pt {
((a\bullet i)\circ j)\bullet b = ((a\bullet i)\circ j)\bullet (i\circ b) 
\ar[r]^-{\raisebox{7pt}{$_{\xi}$}} & 
(a\bullet i\bullet i)\circ (j\bullet b)= 
(a\bullet i\bullet i)\circ b \ar[r]^-{\raisebox{7pt}{$_{(1\bullet\xi_0)\circ 1}$}} & 
(a\bullet i)\circ b 
}$$
and so the axiom of \cite{Szlachanyi-double} holds in the ``lax'' sense that
there is a comparison map between the two sides. Furthermore, this comparison
map is invertible if the duoidal category arises from a naturally Frobenius
map-monoidale, and so the axiom holds up to isomorphism in that
case. Similarly for each of the other seven axioms in \cite{Szlachanyi-double}
there is a comparison map in any duoidal category, and this is invertible in
the case arising from a naturally Frobenius map-monoidale.  
\end{remark}

\subsection{Hopf map}\label{sec:HopfMap}

For a monoidal comonad $a$ on a monoidal category $M$, and any objects
$x,y$ of $M$, we can form the
composite  
$$\xymatrix{
a(x)\ox a(y) \ar[r]^-{\delta\ox1} & a(a(x))\ox a(y) 
\ar[r]^-{a_2} &
a(a(x)\ox y) }$$ 
which is sometimes called the Hopf map. 
The analogue \cite{CLS} in our internal setting is the 2-cell
$\hat{\beta}\colon m.aa\to a.m.a1$ given by the pasting composite below.
\begin{equation}\label{eq:beta-hat}
 \hat{\beta} \quad=\quad  
\raisebox{18pt}{$\xymatrix{
MM \ar[r]^-{aa}_{~}="1" \ar@{=}[d] & 
MM \ar[r]^-{m}_{~}="3" & M \\
MM \ar[r]_-{a1}^{~}="2" & MM \ar[u]_-{aa} \ar[r]_-{m}^{~}="4" & M \ar[u]_a 
\ar@{=>}"1";"2"^{\delta a} 
\ar@{=>}"3";"4"^{a_2} }$}
\end{equation}
We call $\hat{\beta}$ the {\em Hopf map} associated to the monoidal comonad 
$a$ on the monoidale $M$. In the terminology of \cite{CLS}, $a$ is a {\em
right Hopf comonad} whenever $\hat \beta$ is invertible. 

On the other hand, as observed above in Section~\ref{sec:monoidal_morphisms}, 
whenever $M$ is a map-monoidale,  we can
also think of a monoidal comonad on $(M,m,u)$ as a monoidal comonad on
$(M,m,u)^*$. In this case, the Hopf map is the 2-cell  $\hat{\zeta}\colon
aa.m^*\to 1a.m^*.a$ given by the composite appearing below; we call it the
{\em co-Hopf map}.
\begin{equation}\label{eq:zeta-hat}
\hat{\zeta}\quad = \quad
\raisebox{18pt}{$\xymatrix{
M \ar[r]^-{m^*}_{~}="1" \ar[d]_-{a} & MM \ar[d]^-{aa} \ar[r]^-{aa}_{~}="3" & 
MM \ar@{=}[d] \\
M \ar[r]_-{m^*}^{~}="2" & MM \ar[r]_-{1a}^{~}="4" & MM 
\ar@{=>}"1";"2"^{a^2} 
\ar@{=>}"3";"4"^{a\delta} }$}
\end{equation}

\subsection{Modules}\label{sec:Galois}

Let $a$ be a bialgebra in $\cm(M,M)$ for a map-monoidale $M$ in a
monoidal bicategory $\cm$. Since, in particular, $a$ is a
convolution-monoid, we can define (right) actions of $a$ on
objects of $\cm(M,M)$. We define an {\em $a$-module} to be an object
$q\in\cm(M,M)$ equipped with an associative unital action $\gamma\colon
q\bullet a\to q$. Thus $a$-modules are the same as algebras for the monad
$-\bullet a$. Explicitly, the 2-cell $\gamma$ has the form displayed in the
diagram on the left below, 
$$\xymatrix{
M^2 \ar[r]^-{qa}_{~}="1" & M^2 \ar[d]^-{m} & M^2 \ar[r]^-{qa}_{~}="3" \ar[d]_-{m}
& 
M^2 \ar[d]^-{m} & M^2 \ar[r]^-{qa}_{~}="5"  & M^2 \\
M \ar[u]^-{m^*} \ar[r]_-{q}^{~}="2" & M & M \ar[r]_-{q}^{~}="4" & M & 
M \ar[u]^-{m^*} 
\ar[r]_-{q}^{~}="6" & M \ar[u]_-{m^*} 
\ar@{=>}"1";"2"^{\gamma} 
\ar@{=>}"3";"4"^{q_2} 
\ar@{=>}"5";"6"^{q^2} }$$
but pasting with the unit $\eta_m$ of the adjunction $m\dashv m^*$ allows this
to be expressed in terms either of a 2-cell $q_2$ or as a 2-cell $q^2$ as on
the right. 

In any case, the monad $-\bullet a$ is opmonoidal, thanks to the
bialgebra structure on $a$, and so the category of $a$-modules becomes
monoidal \cite{AguiarMahajan-monoidal}. Explicitly, the tensor product of 
$a$-modules $(q,\gamma)$ and $(q',\gamma')$ is $q\circ q'$ equipped with the 
action  
$$\xymatrix{
(q\circ q')\bullet a \ar[r]^-{1\bullet\delta} & 
(q\circ q')\bullet(a\circ a) \ar[r]^-{\xi} & 
(q\bullet a)\circ(q'\bullet a) \ar[r]^-{\gamma\circ\gamma'} & q\circ q'.}$$

For an $a$-module $(q,\gamma)$ there are morphisms $\beta_{q,x}\colon (q\circ
x)\bullet a\to q\circ(x\bullet a)$, natural in the object $x$ of $\cm(M,M)$,
and given by the  composite 
$$
\xymatrix @C1.5pc {
(q\circ x) \bullet a \ar[r]^-{1\bullet\delta} & 
(q\circ x) \bullet(a\circ a) \ar[r]^-{\xi} & 
(q\bullet a)\circ(x\bullet a) \ar[r]^-{\gamma\circ 1} & 
q\circ(x\bullet a)}
$$
or equivalently as 
\begin{equation}\label{eq:beta}
\xymatrix @C4pc {
m.xM.qa.m^* \ar[r]^-{m.xM.q\delta.m^*} & m.xa.qa.m^* \ar[r]^-{m.xa.q^2} & 
m.xa.m^*.q.}
\end{equation}
These maps are called the {\em Galois maps} of $a$. 

\subsection{Comodules}\label{sec:coGalois}

Dually to the previous section, for a map-monoidale $M$ in a monoidal
bicategory $\cm$, and a bialgebra $a$ in $\cm(M,M)$, we define a (right) {\em 
$a$-comodule} to be an object $p\in\cm(M,M)$ equipped with a coassociative
counital coaction $\rho\colon p\to p\circ a$; in other words, a coalgebra for
the comonad $-\circ a$; this time $\rho$ has the simpler form   
$$
\xymatrix{
M\ar[rr]^-p_{~}="1" \ar[rd]_-p & {}\ar@{=>}[d]^{\rho} &
M \\
& M\ar[ru]_-a\ar@{}^{~}="2"}
$$
in terms of \cm.  

Since this comonad is monoidal \cite{AguiarMahajan-monoidal}, the category of 
$a$-comodules is also monoidal, with the tensor product of $(p,\rho)$ and 
$(p',\rho')$ given by $p\bullet p'$ with coaction 
$$\xymatrix{
p\bullet p' \ar[r]^-{\rho\bullet\rho'} & 
(p\circ a)\bullet(p'\circ a) \ar[r]^-{\xi} & 
(p\bullet p')\circ(a\bullet a) \ar[r]^-{1\circ\mu} & 
(p\bullet p')\circ a .}$$

Once again there are maps $\zeta_{p,x}\colon p\bullet (x\circ a)\to (p\bullet
x)\circ a$, natural in  the object $x$ of $\cm(M,M)$, and this time given by   
$$
\xymatrix{
p\bullet(x\circ a)\ar[r]^-{\rho\bullet1} & 
(p\circ a)\bullet (x\circ a)\ar[r]^-{\xi} & 
(p\bullet x)\circ(a\bullet a) \ar[r]^-{1\circ\mu} & 
(p\bullet x)\circ a
}$$
or equivalently by 
\begin{equation}\label{eq:zeta}
\xymatrix @C4pc {
m.Ma.px.m^* \ar[r]^-{m.Ma.\rho x.m^*} & m.aa.px.m^* \ar[r]^-{a_2.px.m^*} & 
a.m.px.m^* }
\end{equation}
and these are called {\em co-Galois maps}.

\section{Duality}

\subsection{Duality principles for duoidal categories}

As observed in \cite[Section~4.3]{Street-chaire} and
\cite{AguiarMahajan-monoidal}, there are various dualities available for
duoidal categories. These are higher-dimensional analogues of the dualities
for {\em double algebras} described in \cite{Szlachanyi-double}. 

For any duoidal category one can obtain new duoidal categories by reversing
either or both of the monoidal structures. For any duoidal category \cd, we
write $\cd\rev$ for the duoidal category obtained from \cd by reversing
both. Thus if we write $f\rev$ for an object $f\in\cd$, seen as lying in
$\cd\rev$, then $f\rev\circ g\rev=(g\circ f)\rev$ and $f\rev\bullet
g\rev=(g\bullet f)\rev$.  

We can also obtain a duoidal structure on $\cd\op$. If we write $f\op$ for an
object $f\in\cd$, seen as lying in $\cd\op$, then $f\op\circ g\op=(f\bullet
g)\op$ and $f\op\bullet g\op=(f\circ g)\op$. 

\subsection{Duality in monoidal bicategories} \label{sect:bicategorical-dual}

Let $X$ be an object of the monoidal bicategory \cm. A {\em right dual for
  $X$} consists of an object $\dual{X}$ equipped with morphisms $n\colon
I\to\dual{X}X$ and $e\colon X\dual{X}\to I$ satisfying the triangle equations
up to coherent isomorphism \cite{DS:Hopf-algebroid}. 

Let $\cm\du$ be the full sub-bicategory of \cm consisting of those objects with
right duals; this is in fact closed under the monoidal structure, with
$\dual{XY}$ naturally isomorphic to $\dual{Y}\,\dual{X}$ and $I$ self-dual. 
Write $\cm\oprevdl$ for $((\cm\op)\du)\rev=((\cm\rev)\du)\op$; this has
objects the objects of \cm with {\em left} duals. There is a monoidal
biequivalence $\cm\du\sim \cm\oprevdl$ of monoidal bicategories sending an
object $X$ to $\dual{X}$ \cite{DS:Hopf-algebroid}. A morphism $f\colon X\to Y$
is sent to the composite
$$\xymatrix{ 
\dual{Y} \ar[r]^-{n1} & 
\dual{X}X\dual{Y} \ar[r]^-{1f1} & 
\dual{X}Y\dual{Y} \ar[r]^-{1e} & \dual{X} }
$$
which we call $f^+$. The inverse sends $g\colon\dual{Y}\to\dual{X}$ to $g^-$
defined by  
$$\xymatrix{
X \ar[r]^-{1n} & 
X\dual{Y}Y \ar[r]^-{1g1} & 
X\dual{X}Y \ar[r]^-{e1} & Y . }
$$

In particular, for any object $X\in\cm$ with a right dual $\dual X$, we have 
a monoidal equivalence $\cm(X,X)\simeq\cm(\dual{X},\dual{X})\rev$.

\subsection{Duality and map-monoidales}\label{sec:duality&map-monoidales}

Of course a monoidal biequivalence preserves (in an up-to-equivalence sense)
any structure expressible in a monoidal bicategory, such as map-monoidales,
morphisms between them, and composition and convolution products.  

Thus if $M$ is a map-monoidale, which as an object of \cm has a right dual
$\dual{M}$, then it is a map-monoidale in $\cm\du$, and so
$\dual{M}$ is a map-monoidale in $\cm\oprevdl$, and the induced
equivalence $\cm\du(M,M)\simeq \cm\oprevdl(\dual{M},\dual{M})$ is a strong
duoidal equivalence. (Recall that a functor between duoidal categories is {\em
strong duoidal}, or {\em 2-strong monoidal} in the original nomenclature of
\cite{AguiarMahajan-monoidal}, if it preserves all the duoidal structure up to
coherent natural isomorphism; this means in particular that it is strong
monoidal with respect to both monoidal structures, but also that these
isomorphisms are compatible with the structure maps $\xi$, $\xi^0$, $\xi_0$,
and $\xi^0_0$.) Since $\cm\du$ is a full sub-bicategory of \cm, we may write
this strong duoidal equivalence more simply as
$\cm(M,M)\simeq\cm(\dual{M},\dual{M})\rev$. 

Recall that if the map-monoidale $(M,m,u)$ is naturally Frobenius, the object
$M$ is self-dual in the monoidal bicategory \cm, with unit and counit 
$$\xymatrix{ I \ar[r]^-{u} & M \ar[r]^-{m^*} & MM
&&
MM \ar[r]^-{m} & M \ar[r]^-{u^*} & I . }
$$
Thus a morphism $f\colon M\to M$ has mates $f^+$ and $f^-$ given by  
$$
\xymatrix @R 0pc {
M \ar[r]^-{u1} & M^2 \ar[r]^-{m^*1} & M^3 \ar[r]^-{1f1} & M^3 \ar[r]^-{1m} & 
M^2 \ar[r]^-{1u^*} & M \\
M \ar[r]^-{1u} & M^2 \ar[r]^-{1m^*} & M^3 \ar[r]^-{1f1} & M^3 \ar[r]^-{m1} & 
M^2 \ar[r]^-{u^*1} & M }
$$
and these assignments are mutually inverse, in the sense that $(f^-)^+ \cong f
\cong (f^+)^-$. 

These form part of a duoidal equivalence $\cm(M,M)\simeq\cm(M,M)\rev$, thanks 
to the monoidal biequivalence $\cm\du\simeq{\cm\oprevdl}$ of
Section~\ref{sect:bicategorical-dual}. We shall need notation for the
structure maps. In the case of the composition structure, we write
$\Xi=\Xi_{f,g}\colon g^-\circ f^-\cong (f\circ g)^-$ and $\Xi_0\colon i\cong
i^-$ for the structure maps. For the convolution structure we write
$\Upsilon=\Upsilon_{f,g}\colon g^-\bullet f^-\cong(f\bullet g)^-$ and
$\Upsilon_0\colon j\cong j^-$. Their explicit forms can be found in
Appendix \ref{app:Xi-Upsilon}. 

For a naturally Frobenius map-monoidale $(M,m,u)$ in a monoidal bicategory
\cm, consider the naturally Frobenius map-monoidale $(M,m^*,u^*)$ in $\cm
\oprev$. The monoidal biequivalence $(-)^-:{\cm\oprevdl}\simeq \cm\du$ of
Section~\ref{sect:bicategorical-dual} takes it to the naturally Frobenius
map-monoidale $(M,m^{*-},u^{*-})$ in \cm, but the identity morphism $1\colon
M\to M$ underlies a  monoidal equivalence $(M,m^{*-},u^{*-})\simeq(M,m,u)$,
and we generally identify these monoidales. We shall write $\chi\colon
m^{*-}\to m$ for the isomorphism involved in this monoidal equivalence.  

\begin{remark}
Some double algebras, in the sense of \cite{Szlachanyi-double}, possess an
endomorphism $S$ called an antipode, and defined equationally. In a duoidal
category arising from a naturally Frobenius map-monoidale, the functor $S$
sending $f$ to $f^-$ satisfies these ``antipode axioms'' up to natural
isomorphism. 

In the particular class of double algebras, obtained in
\cite[Section 8.5]{Szlachanyi-double} as endomorphism algebras of Frobenius
extensions, the explicit expressions of $S$ and $S^{-1}$ are direct analogues 
of our formulae for $(-)^-$ and $(-)^+$.
\end{remark}

\subsection{Duality for monoidal comonads}\label{sect:minus}

In light of the duoidal equivalence between $\cm(M,M)$ and $\cm(M,M)\rev$,
if $a$ is a monoidal comonad on a naturally Frobenius map-monoidale
$M$, then $a^-$ also has a monoidal comonad structure on $M$.
 
This construction will play a crucial role in our analysis of antipodes. For a
Hopf algebra $H$, the antipode can be seen as a coalgebra homomorphism from
$H$ to the coalgebra $H\op$  obtained from $H$ by using the
reversed comultiplication (and, likewise, as an algebra homomorphism). In our
context, the antipode will have the form of a morphism $a\to a^-$ of
bialgebras. This time, however, even if we are not interested in the
preservation of bialgebra structure we are still forced to work with $a^-$,
since there is no analogue of the fact that the Hopf algebras $H$ and $H\op$
have the same underlying vector space. 

The Hopf map $\hat{\beta}$ for the monoidal comonad $a^-$ is in fact the
co-Hopf map $\hat{\zeta}$ for $a$; more precisely, there is a
commutative diagram  
\begin{equation}\label{eq:beta-zeta_duality}
\xymatrix{
m.a^- a^- \ar[d]_{\hat{\beta}} \ar[r] &
m^{*-}.a^-a^- \ar[r]   &  m^{*-}.(aa)^- \ar[r]  &
(aa.m^*)^-\ar[d]^-{\hat \zeta ^-} \\
a^-.m.a^-1 \ar[r] &
 a^-.m^{*-}.a^-1^- \ar[r] &  a^-.m^{*-}.(1a)^- \ar[r]  &
(1a.m^*.a)^-}
\end{equation}
where the un-named arrows are isomorphisms arising from $\chi\colon m^{*-}\to
m$ and the various preservation properties of the monoidal biequivalence
$\cm\oprevdl\sim\cm\du$.  

\subsection{Further structure in the naturally Frobenius case} 
\label{sec:more-on-NatFrob}

For any two morphisms $f\colon M\to M$ and $g\colon M\to M$, there is a 2-cell 
$$
\phi_{f,g}\colon f\circ g^-\to ((f\bullet g)\circ j)\bullet i  
$$
natural in $f$ and $g$, and given by the following pasting composite.
$$
\xymatrix{
& M \ar[r]^-{1u} & M^2 \ar[r]^-{1m^*} & M^3 \ar[r]^-{1g1} & 
M^3 \ar[d]^-{m1} \\
M \ar[ur]^-{f}  \ar[r]^-{1u} \ar@{=}@/_2pc/[drr] &
M^2 \ar@{=}[r]_(0.79){~}="1" \ar[dr]_-{m}^(0.7){~}="2"  & 
M^2 \ar[r]^-{1m^*} & 
M^3 \ar[u]^-{f11} & M^2 \ar[r]^-{u^*1} \ar[dr]_-{j1} & 
M \ar[d]^-{u1} \ar@{=}[dr] \\
&& M \ar[u]_-{m^*} \ar[r]^-{m^*} 
\ar@/_2pc/[rrrr]_-{((f\bullet g)\circ j)\bullet i} & 
M^2 \ar[u]^-{m^*1} \ar[ur]_-{(f\bullet g)1} 
\ar[rr]_-{((f\bullet g)\circ j)1} && 
M^2 \ar[r]^-{m} & M 
\ar@{=>}"1";"2"_{\eta_m} 
}
$$

We can play the same game when we consider the naturally Frobenius
map-monoid\-ale  $(M,m,u)^*=(M,m^*,u^*)$ in $\cm\oprev$. Since
this interchanges $m$ and $m^*$, and $u$ and $u^*$, as well as reversing the
order of composition and the order of tensoring, the morphism $f^-$ defined
above does not depend on whether we work with $(M,m,u)$ or $(M,m,u)^*$. On the
other hand, the maps $\phi_{f,g}$ do so depend: the morphism $\phi_{f,g}$
defined using $(M,m,u)^*$ is a morphism  
$$
\psi_{f,g}\colon  f^-\circ g\to i\bullet(j\circ(f\bullet g))
$$
in $\cm(M,M)$, constructed in a dual manner to that given above for $\phi$. 

\begin{lemma}\label{lem:phi-psi-identities}
For the 2-cells $\phi$ and $\psi$ above, and for any 1-cells $f,g,h,k:M\to M$,
the following diagrams commute. 
$$ 
\xymatrix@R=5pt @C=45pt{ 
(f^-\circ g)\bullet (h^-\circ k)\ar[r]^-{\psi_{f,g}\bullet 1}
\ar[ddd]_-\xi &
i\bullet (j\circ (f\bullet g)) \bullet (h^-\circ k)
\ar[dd]^-{1\bullet \xi}
\\
\\
& i\bullet (h^-\circ(f\bullet g \bullet k)) 
\ar[dd]^-{1 \bullet \psi_{h,f\bullet g \bullet k}} 
\\
(f^-\bullet h^-) \circ (g\bullet k) \ar[ddd]_-{\Upsilon_{h,f}\circ1} &
\\
& i\bullet i \bullet (j\circ(h\bullet f \bullet g\bullet k))
\ar[dd]^-{\xi_0 \bullet 1}
\\
\\
(h\bullet f)^- \circ (g\bullet k) \ar[r]_-{\psi_{h\bullet f,g\bullet k}}&
i\bullet (j\circ(h\bullet f \bullet g\bullet k)) }
$$
$$
\xymatrix@R=5pt @C=45pt{ 
(f\circ g^-)\bullet (h\circ k^-) \ar[r]^-{1\bullet \phi_{h,k}}
\ar[ddd]_-\xi&
(f\circ g^-)\bullet ((h\bullet k)\circ j) \bullet i 
\ar[dd]^-{\xi \bullet 1}
\\
\\
&((f\bullet h\bullet k)\circ g^-)\bullet i
\ar[dd]^-{\phi_{f\bullet h \bullet k,g} \bullet 1}
\\
(f\bullet h)\circ (g^-\bullet k^-)\ar[ddd]_-{1\circ\Upsilon_{k,g}} & 
\\
& ((f\bullet h \bullet k\bullet g)\circ j) \bullet i\bullet i
\ar[dd]^-{1\bullet\xi_0}
\\
\\
(f\bullet h)\circ (k\bullet g)^- \ar[r]_-{\phi_{f\bullet h,k\bullet g}}&
((f\bullet h \bullet k\bullet g)\circ j) \bullet i  .}
$$
\end{lemma}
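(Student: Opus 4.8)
\emph{Overall strategy.} The plan is to prove the first square directly, by expanding every edge into its pasting presentation and reducing both composites to a common normal form, and then to deduce the second square from the first by duality. Since $\psi_{f,g}$ is by definition the 2-cell $\phi_{f,g}$ constructed for the naturally Frobenius map-monoidale $(M,m,u)^*$ in $\cm\oprev$, the duality principle for duoidal categories that reverses both monoidal structures carries the first display, after a relabelling of $f,g,h,k$, exactly onto the second: it interchanges $\circ$-reversal with $\bullet$-reversal, sends $i\bullet(-)$ to $(-)\bullet i$ and $j\circ(-)$ to $(-)\circ j$, and turns each $\psi$ into the corresponding $\phi$ and each $\Upsilon_{h,f}$ into an $\Upsilon_{k,g}$. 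Hence it suffices to treat the first square.

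\emph{Reduction to a structural identity.} First I would record explicit pasting presentations of every edge of the first square: the instances $\psi_{f,g}$ and $\psi_{h,f\bullet g\bullet k}$ and the boundary map $\psi_{h\bullet f,g\bullet k}$ from the (dualised) definition of $\phi$ in Section~\ref{sec:more-on-NatFrob}; the convolution comparison $\xi$ from its defining diagram in terms of $\eta_m$; the duality structure map $\Upsilon_{h,f}\colon f^-\bullet h^-\cong(h\bullet f)^-$ and the unit $\xi_0=\epsilon_m$ from Appendix~\ref{app:Xi-Upsilon}. Every one of these is natural in the 1-cells $f,g,h,k$, which therefore enter each pasting only through their mates, as labels carried unchanged along wires. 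The commutativity of the square is thus reduced to an identity among the purely structural 2-cells assembled from $m,m^*,u,u^*$, the units and counits $\eta_m,\epsilon_m$ (and $\eta_u,\epsilon_u$) of the two adjunctions, and the self-duality data $m^*.u$ and $u^*.m$ of $M$.

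\emph{The computation.} With both paths drawn as a single large pasting diagram, I would simplify by repeatedly applying the triangle identities for $m\dashv m^*$ and $u\dashv u^*$ to cancel the adjunction units and counits introduced by the two copies of $\psi$ and by $\Upsilon$, using naturality of $\eta_m$ and $\epsilon_m$ to bring those cancellations into position, and invoking monoidale coherence (the pentagon for $\alpha$ and the unit triangles for $\lambda,\rho$, suppressed by the Gray-monoid convention) to reconcile the bracketings hidden in expressions such as $i\bullet(j\circ(f\bullet g))\bullet(h^-\circ k)$. Wherever the self-duality forces $m$ to be commuted past $m^*$, the Frobenius relations (invertibility of $\pi$ and $\pi'$) supply the required exchange. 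Both composites should then collapse to the same canonical 2-cell with codomain $i\bullet(j\circ(h\bullet f\bullet g\bullet k))$.

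\emph{Main obstacle.} The one genuinely delicate edge is $\Upsilon_{h,f}$, the single map that mixes the duality $(-)^-$ with the convolution $\bullet$; its expansion contributes several copies of the self-duality units and counits that must be matched against those produced by the two separate applications $\psi_{f,g}$ and $\psi_{h,f\bullet g\bullet k}$ on the opposite path. Arranging this cancellation so that the duality data created on the $\xi$-then-$\Upsilon$-then-$\psi$ side align precisely with those on the $\psi$-then-$\xi$-then-$\psi$ side, all while keeping the suppressed associativity coherences correct, is where essentially all of the work lies; once the bookkeeping is organised, the individual reductions are forced by the triangle identities and coherence.
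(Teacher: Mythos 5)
Your proposal is correct and follows essentially the same route as the paper: the paper likewise proves the first diagram by a direct computation using the explicit forms of $\psi$, $\Upsilon$, and $\xi$, together with unitality and associativity of $m$, a triangle identity for the adjunction $m\dashv m^*$, and pseudo-naturality, and then obtains the second diagram by the same symmetry you invoke (replacing $(M,m,u)$ by $(M,m,u)^*$ in $\cm\oprev$, which interchanges $\phi$ and $\psi$).
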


\proof
In order to see commutativity of the first diagram, use the explicit forms of
$\psi$, $\Upsilon$, and  $\xi$;
unitality and associativity of $m:M^2\to M$; a triangle identity on the
adjunction $m\dashv m^*$; and pseudo-naturality of the occurring 2-cells.
Commutativity of the second diagram follows symmetrically.
\endproof

\begin{lemma} \label{lem:theta}
For any 1-cells $f,g,h:M\to M$, there is a 2-cell
$$
\vartheta_{f,g,h}:f\circ ((g\circ j)\bullet i)\circ h^- \to  
((f\bullet h)\circ g\circ j)\bullet i
$$  
obeying the following properties.
\begin{itemize}
\item[{(i)}] $\vartheta$ is natural in each of the 1-cells $f,g,h$.
\item[{(ii)}] For any 1-cells $f$ and $h$ the equality
$\vartheta_{f,i,h}=\phi_{f,h}$ holds (modulo the isomorphisms $i\circ j\cong
j$ and $j\bullet i\cong i$). 
\item[{(iii)}] For any 1-cells $f,g,h,k$, the following diagram commutes. 
$$
\xymatrix{
f\circ g\circ h^-\circ k^-\ar[r]^-{1\circ \phi_{g,h} \circ 1} 
\ar[d]_-{1\circ 1\circ\Xi_{k,h}} &
f\circ ((g\bullet h)\circ j)\bullet i) \circ k^- 
\ar[dd]^-{\vartheta_{f,g\bullet h,k}}\\
f\circ g \circ (k\circ h)^-\ar[d]_-{\phi_{f\circ g,k\circ h}}\\
(((f\circ g)\bullet (k\circ h))\circ j)\bullet i
\ar[r]_-{(\xi\circ 1)\bullet 1}&
((f\bullet k)\circ (g\bullet h)\circ j)\bullet i }
$$
\item[{(iv)}] For any 1-cells $f,g,h$, the following diagram commutes.
$$
\xymatrix{
f \circ ((g\circ j)\bullet i)\circ h^- \ar[r]^-{\vartheta_{f,g,h}}
\ar[d]_-{1 \circ ((1\circ \xi^0)\bullet 1)\circ 1} &
((f\bullet h)\circ g \circ j)\bullet i 
\ar[d]^-{(1 \circ 1 \circ \xi^0)\bullet 1 } \\
f \circ ((g\circ j\circ j)\bullet i)\circ h^- 
\ar[r]_-{\vartheta_{f,g\circ j,h}} &
((f\bullet h)\circ g \circ j\circ j)\bullet i }
$$
\end{itemize}
\end{lemma}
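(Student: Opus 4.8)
The plan is to construct $\vartheta_{f,g,h}$ directly as a pasting composite in $\cm$, obtained by generalizing the pasting that defines $\phi_{f,h}$ in Section~\ref{sec:more-on-NatFrob}. Recall that $\phi_{f,h}\colon f\circ h^-\to((f\bullet h)\circ j)\bullet i$ is assembled from the self-duality data $u,m^*,m,u^*$ together with a single instance of the unit $\eta_m$ of $m\dashv m^*$; the passage from $\phi_{f,h}$ to $\vartheta_{f,g,h}$ amounts to threading the two extra strands $g$ and $j$, coming from the middle factor $(g\circ j)\bullet i$ of the domain, through the region of that diagram lying between $f$ and $h^-$, so that they emerge inside the $\circ g\circ j$ of the codomain. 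I would record the domain $f\circ((g\circ j)\bullet i)\circ h^-$ and codomain $((f\bullet h)\circ g\circ j)\bullet i$ as explicit composites in $\cm$ and define $\vartheta$ by the evident rewiring using one copy of $\eta_m$.

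Properties (i) and (ii) are then immediate. Naturality in $f$, $g$, and $h$ follows from the pseudo-naturality of every 2-cell appearing in the pasting, exactly as for $\phi$ and $\psi$ and as invoked in the proof of Lemma~\ref{lem:phi-psi-identities}. For (ii), setting $g=i$ turns the threaded $g$-strand into an identity; under the isomorphisms $i\circ j\cong j$ and $j\bullet i\cong i$ the domain collapses to $f\circ h^-$ and the codomain to $((f\bullet h)\circ j)\bullet i$, and the pasting defining $\vartheta_{f,i,h}$ reduces verbatim to the one defining $\phi_{f,h}$; this is a direct inspection.

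The content is in (iii) and (iv), which are coherence diagrams to be checked by diagram chase. For (iv) I would insert the comultiplication $\xi^0\colon j\to j\circ j$, which was identified above as the comultiplication of the comonad out of which the convolution unit $j$ is built, and observe that the two ways of introducing the extra $j$-strand (into the domain's $g\circ j$ versus the codomain's $g\circ j$) agree by naturality of the pasting together with coassociativity; this is the lighter of the two. The main obstacle is (iii): here one route applies $\phi_{g,h}$ to produce $((g\bullet h)\circ j)\bullet i$ and then $\vartheta_{f,g\bullet h,k}$, while the other first merges the two mates via $\Xi_{k,h}\colon h^-\circ k^-\cong(k\circ h)^-$ and then applies $\phi_{f\circ g,k\circ h}$. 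Matching these requires carefully tracking how the mate construction $(-)^-$, assembled from $u,m^*,m,u^*$, interleaves with both $\circ$ and $\bullet$ on the two sides, and invoking the explicit forms of $\Xi$ and $\xi$ from Appendix~\ref{app:Xi-Upsilon} together with the associativity and unit coherence of $m$ and a triangle identity for $m\dashv m^*$. I expect this verification to be the longest and most delicate part, with the compatibilities recorded in Lemma~\ref{lem:phi-psi-identities} serving as the principal tools.
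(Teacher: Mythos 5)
Your overall strategy coincides with the paper's: define $\vartheta_{f,g,h}$ as an explicit pasting composite generalizing the one for $\phi$, read off (i), (ii), (iv) from the construction, and prove (iii) by a diagram chase using the explicit forms of $\Xi$, $\xi$ and the coherence of $m$. The paper likewise treats (i), (ii), (iv) as immediate from the construction and reserves the real work for (iii).

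However, there is a genuine gap at the central step: you never actually produce the 2-cell $\vartheta_{f,g,h}$, and the recipe you give --- ``the evident rewiring using one copy of $\eta_m$'' --- is not sufficient. Unwinding the definitions, the domain is $h^-.\bigl(m.((j.g)\otimes 1).m^*\bigr).f$ and the codomain is $m.\bigl((j.g.(f\bullet h))\otimes 1\bigr).m^*$, so the middle factor $(-)\bullet i$, i.e.\ an $m.(\,\cdot\,)M.m^*$ sandwich sitting \emph{between} $f$ and $h^-$, must migrate to the \emph{outside}, past the entire mate $h^-=u^*1.m1.1h1.1m^*.1u$. Commuting an $m^*$ past an $m1$ (or $1m$) in this way is exactly the content of the Frobenius 2-cell $\pi'\colon 1m.m^*1\to m^*.m$, and the paper's pasting for $\vartheta$ indeed contains an instance of $\pi'1$ (together with an instance of $\epsilon_u$ and the usual unit/associativity constraints); a single $\eta_m$, which in the definition of $\phi$ merely splits an identity into $m^*.m$, cannot effect this repositioning. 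Since properties (ii)--(iv) depend on the precise choice of pasting, and since your described wiring does not visibly typecheck, the existence claim --- which is the heart of the lemma --- remains unestablished in your argument. (A secondary point: for (iii) the paper works from the explicit forms of $\Xi$, $\psi$, $\xi$ and $\vartheta$ together with a triangle identity for $u\dashv u^*$, rather than from Lemma~\ref{lem:phi-psi-identities}, which concerns $\Upsilon$ and the $\bullet$-products and is not obviously applicable to the $\Xi$/$\circ$ situation arising in (iii).)
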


\proof 
We construct $\vartheta_{f,g,h}$ as the pasting composite
$$
\xymatrix{
M \ar@{=}@/^1.5pc/[rr] \ar[r]_{m^*} \ar[dd]^{m^*} & 
M^2 \ar[r]^{1u^*}_(0.5){~}="1" \ar@{=}@/_1pc/[dr]^(0.59){~}="2" 
\ar@{=>}"1";"2"^{1\epsilon_u}  & 
M \ar[r]^f \ar[d]^{1u} &
M \ar[r]^{m^*} & 
M^2 \ar[r]^{g1} \ar[dddrr]^{1h^-} \ar[d]_{11u} & 
M^2 \ar[r]^{u^*1} & 
M  \ar[r]^-{u1}\ar@{=}@/_1pc/[rr] &
M^2\ar[r]^-{m} &
M \ar[d]^{h^-}\\
&&
M^2 \ar[d]^{1m^*} &&
M^3 \ar[d]_{11m^*} &&&&
M \ar[d]_-{u1}\ar@{=}@/^1pc/[dd]
\\
M^2 \ar[rr]^{m^*1} \ar@/_1pc/[rrrrdd]_{(f\bullet h)1} 
&&
M^3 \ar[r]^{f11} \ar[dr]_{fh1} & 
M^3 \ar[r]^{m^*11} \ar[d]^{1h1} &
M^4 \ar[d]^{11h1} &&&
M \ar[ru]^{u^*1} &
M^2 \ar[d]_-{m}
\\
&&& 
M^3 \ar[r]^{m^*11} \ar[dr]_{m1}  & 
M^4 \ar[r]^{1m1} \ar@{=>}[d]^(0.4){\pi'1} &
M^3 \ar[r]^{1u^*1} &
M^2 \ar[ru]^{g1} & & 
M\\
&&&&
M^2 \ar[ur]_{m^*1} \ar@{=}@/_1pc/[urr] }
$$
where the undecorated regions denote the associativity and the
unit constraints of $m$; the counitality and coassociativity 
constraints of $m^*$, an identity 2-cell from the definition of
$h^-$, and some middle-four interchange laws in $\cm$. Assertions (i), (ii),
and (iv) are immediate by the construction of $\vartheta$. Part (iii) follows 
by the explicit forms of $\Xi$, $\psi$, $\xi$, and $\vartheta$, using a
triangle identity on the adjunction $u\dashv u^*$, unitality and associativity
of $m$, and pseudo-naturality.
\endproof

\section{Examples}\label{sec:examples}

\subsection{Bialgebras in braided monoidal categories}

A monoidal category $\cc$ can be regarded as a bicategory $\cm$ with a
single object $\ast$. The hom category $\cm(\ast,\ast)$ is $\cc$
and the horizontal composition is provided by the  reverse monoidal
product $\ox\rev$ of $\cc$. Now if $\cc$ is in addition braided,
then this bicategory $\cm$ is monoidal via the
monoidal product also given by $\ox$. The interchange law, between the
horizontal composition $\ox\rev$ and the monoidal product $\ox$, is provided
by the braiding $\beta$ in $\cc$ as   
$$
\xymatrix{
(x\ox\rev y)\ox (z \ox\rev v) =
y\ox x \ox v \ox z \ar@{|->}[r]^-{1\ox \beta \ox 1} &
y\ox v \ox x\ox z  =
(x \ox  z) \ox\rev  (y  \ox  v).
}$$
Clearly, the single object $\ast$ is a trivial naturally Frobenius
map-monoidale in $\cm$ rendering $\cc\cong \cm(\ast,\ast)$ a duoidal category. This is the duoidal structure discussed
in \cite[Section 6.3]{AguiarMahajan-monoidal}: both the composition product
$\circ$ and the convolution product $\bullet$ are equal to $\ox$. We conclude
that this duoidal category arises from a suitable naturally Frobenius
map-monoidale in a monoidal bicategory. Thus we obtain the following. 

\begin{example}\label{ex:braided_bialg}
Regard a braided monoidal category $\cc$ as a monoidal bicategory $\cm$ with a single object. Monoidal comonads in $\cm$ on the
trivial naturally Frobenius map-monoidale are the usual bialgebras in the
braided monoidal category $\cc$.
\end{example}

\subsection{Bialgebroids}

In our next example we take \cm to be the monoidal bicategory
\Mod: an object of \Mod is a ring, a morphism is a bimodule, and a 2-cell is a 
homomorphism of bimodules. Morphisms $R\to S$ and $S\to T$ are composed by
tensoring over $S$; the monoidal structure is given by the usual tensor
product $\ox$ of rings (and of modules and their
homomorphisms). The unit object $I$ is $\bz$ (or the base ring, if one is
working over some other commutative ring).

If $R$ is a {\em commutative} ring, then the multiplication $R\ox R\to R$ is a
homomorphism of rings; of course the unit also determines a ring homomorphism
$I\to R$, and so one has a map-monoidale in \Mod.  

We now analyze what being naturally Frobenius means in this case. The
multiplication $m\colon RR\to R$ is $R$, seen as a left $R\ox R$, right
$R$-module; all the actions are regular. We write this as
$
{}_{\bullet\bullet}R_\bullet$ or sometimes
$
{}_{ab}R_c$. The adjoint $m^*$ is then
$
{}_{\bullet}R_{\bullet\bullet}$, and so the composite $m^*.m$ is
given by  
$$\tensor[_a_b]{R}{_x} \tensor[_x]\ox{_x} \tensor[_x]{R}{_c_d} \cong
\tensor[_a_b]{R}{_c_d} \cong \tensor[_a_c]{R}{_b_d}$$ 
where the last step uses commutativity of $R$ to allow left and right actions
to be interchanged. On the other hand the composite $1m.m^*1$ is given by 
\begin{align*}
\tensor[_a]{R}{_x_y} \ox \tensor[_b]{R}{_z} \tensor[_x_y_z]\ox{_x_y_z} 
\tensor[_x]{R}{_c} \ox \tensor[_y_z]{R}{_d}  &\cong 
\tensor[_a]{R}{_c_y} \ox\tensor[_b]{R}{_z} \tensor[_y_z]{\ox}{_y_z} 
\tensor[_y_z]{R}{_d}  \\
&\cong \tensor[_a]{R}{_c_y} \tensor[_y]{\ox}{_y} \tensor[_y_b]{R}{_d} \\
&\cong \tensor[_a_c]{R}{_y} \tensor[_y]{\ox}{_y} \tensor[_y]{R}{_b_d} \\
&\cong \tensor[_a_c]{R}{_b_d}
\end{align*}
and so $1m.m^*1\cong m^*.m$; one can check that the composite
isomorphism we have constructed is indeed $\pi'$. 
Similarly $\pi\colon m1.1m^*\to m^*.m$ is invertible. 

Thus any commutative ring $R$ determines a naturally Frobenius 
map-monoidale in \Mod, giving rise to a duoidal category as follows. 

\begin{example}\label{ex:bialgebroid}
The hom-category $\Mod(R,R)$ is the category of $(R,R)$-bimodules. The
$\circ$-tensor is given by tensoring over $R$, and $i$ is the the regular
bimodule $R$. Since $R$ is commutative, $(R,R)$-bimodules can be regarded as
$R\ox R$-modules, and $R\ox R$ is itself commutative, thus tensoring over
$R\ox R$ defines the second monoidal structure $\bullet$ on $\Mod(R,R)$ with
unit $R\ox R$. This duoidal category was studied in
\cite[Example~6.18]{AguiarMahajan-monoidal}. A bimonoid in this duoidal
category is precisely an $R$-bialgebroid $A$ for which the maps $s,t\colon
R\to A$ land in the centre of $A$: see \cite[Example
6.44]{AguiarMahajan-monoidal} and \cite[Section~4.3]{BohmChenZhang}.  
\end{example}

\subsection{Weak bialgebras}

Our next example also involves the monoidal bicategory \Mod, but the naturally
Frobenius monoidale will be of a different type.  

If $R$ is a ring, we may of course regard it as a right $R\op\ox R$-module,
and so as a morphism $n\colon I\to R\op R$ in \Mod. Similarly we may regard it
as a left $R\ox R\op$-module and so as a morphism $e\colon RR\op\to I$ in
\Mod. These satisfy the triangle equations for an adjunction with $n$ as the
unit and $e$ the counit. It follows that $R\op R$ becomes a monoidale in \Mod
with multiplication  
$$\xymatrix{
R\op RR\op R \ar[r]^-{1e1} & R\op R }$$
and unit $n$.

In general, of course, $e$ and $n$ are not maps, but they are so when $R$ is
separable Frobenius; that is, its multiplication has an $R$-bimodule
section $R\to R \ox R$ which is in addition a counital
comultiplication. So in this case we obtain a duoidal category as
follows. 

\begin{example}\label{ex:weak-bialgebra}
The category $\Mod(R\op\ox R, R\op\ox R)$ is duoidal for a separable
Fro\-benius algebra $R$, with both the $\circ$-product and the
$\bullet$-product  given by tensoring over $R\op\ox R$; though built on
different actions in both cases. This duoidal category was studied in
\cite{BohmSpanish-CatWkBialg}, where it was shown that a bimonoid therein was
the same as a weak bialgebra whose separable Frobenius base algebra is
isomorphic to $R$.
\end{example}

\subsection{Categories}

Our next example involves the monoidal bicategory $\Span$. An object of \Span
is a set, a morphism from $X$ to $Y$ is a {\em span} $(u,E,v)$ from $X$ to
$Y$, consisting of a set $E$ equipped with functions $u\colon E\to X$ and
$v\colon E\to Y$. These are composed via pullback. A 2-cell in \Span from
$E$ to $F$ is a function from $E$ to $F$, commuting with the maps into $X$ and
$Y$.  

Any  function $f\colon X\to Y$ determines a span $f_*=(1,X,f)$ from $X$ to
$Y$. Such a span has a right adjoint $f^*=(f,X,1)$ from $Y$ to $X$;
furthermore, every left adjoint in \Span is isomorphic to one of the form
$f_*$.  

The cartesian product of sets makes \Span into a monoidal bicategory (but the
tensor product is not the product in \Span). 

Every set $X$ has a unique comonoid structure in \Set, obtained using the
diagonal $\Delta\colon X\to X\x X$ and the unique map $X\to 1$. Now $\Delta^*$
makes $X$ into a monoidale in \Span. It fails to be a map-monoidale since
$\Delta^*$ is a right adjoint rather than a left adjoint.  
We fix this by moving from \Span to $\Span\co$, in which the 2-cells are
formally reversed; thus the left adjoints in $\Span\co$ are the right adjoints
in \Span. In conclusion, every set $X$ is a map-monoidale in $\Span\co$. 

Furthermore, these map-monoidales are naturally Frobenius; the isomorphisms
$m1.1m^*\cong m^*.m\cong 1m.m^*1$ essentially amount to the fact that the
square 
$$\xymatrix @R1pc @C1pc { 
& X \ar[dr]^-{\Delta} \ar[dl]_-{\Delta} \\
XX \ar[dr]_-{\Delta1} && XX \ar[dl]^-{1\Delta} \\
& XXX }$$
is a pullback in \Set. 

Thus any set $X$ determines a naturally Frobenius map-monoidale in $\Span\co$,
giving rise to a duoidal category as follows. 

\begin{example}\label{ex:category}
The hom-category $\Span\co(X,X)$ is by definition $\Span(X,X)\op$, which in
turn is the opposite  $(\Set/X\x X)\op$ of the slice category $\Set/X\x
X$. The convolution product $\bullet$ is just the product in $\Set/X\x X$,
given by pulling back morphisms into $X\x X$. Every object has a unique
$\bullet$-monoid structure, and every morphism is a homomorphism of
$\bullet$-monoids. The unit $j$ is $X\x X$. The other tensor product $\circ$
is also defined by a pullback, as in the following diagram  
$$\xymatrix @R1pc @C1pc {
&& E\circ F \ar[dr] \ar[dl] \\
& E \ar[dl] \ar[dr] && F \ar[dl] \ar[dr] \\
X && X && X. }$$
A $\circ$-comonoid is precisely a category with object-set $X$; since
$\bullet$-monoid structure is automatic, the bimonoids are also just the
categories with object-set $X$: see \cite[Examples 6.17 and
6.43]{AguiarMahajan-monoidal} or \cite[Section~4.2]{BohmChenZhang}. 
\end{example}

\subsection{Monoidal comonads on autonomous monoidal categories} 

The bicategory \Prof has categories as objects, profunctors $A\to B$
(also known as distributors or modules) as morphisms, and natural
transformations as 2-cells. Recall that a profunctor form $A$ to $B$ is a
functor $B\op\x A\to\Set$, and that the composite of profunctors $f\colon A\to
B$ and $g\colon B\to C$ is given by the coend $(g\circ f)(c,a)=\int^{b\in B}
g(c,b)\x f(b,a)$. Recall further that every functor $f\colon A\to B$ gives
rise to a profunctor, called $f_*$ or just $f$, given by $f_*(b,a)=B(b,fa)$,
and that this has an adjoint $f\dashv f^*$, given by $f^*(a,b)=B(fa,b)$.  In
fact, these constructions are the object maps of pseudofunctors $(-)_*:\Cat\to
\Prof$ and $(-)^*:\Cat\coop\to \Prof$, respectively. 

The bicategory \Prof is monoidal, with tensor product being the
cartesian product of categories (but the resulting monoidal structure on \Prof
is not itself  cartesian). A
monoidale in \Prof is a promonoidal category in the sense of Day
\cite{Day-convolution}, while a map-monoidale is essentially just a monoidal
category. The monoidal category is naturally Frobenius, as a map-monoidale in
\Prof, just when it has left and right duals: see
\cite[Theorem~6.4]{Nacho-FormalHopfAlgebraI} or 
\cite[Remark~6.3]{dualsinvert}. There are also enriched variants of this 
example; see \cite{Nacho-FormalHopfAlgebraI} once again. 

If $M$ is a monoidal category with left and right duals, $N$ is another
monoidal category, and $f\colon N\to M$ a strong monoidal functor, then the
functor $f$ has an adjoint $f\dashv f^*$ in \Prof, and the induced comonad
$ff^*$ is monoidal in \Prof, and so it can be regarded as a bimonoid in 
the duoidal category  $\Prof(M,M)$.

In particular, if a strong monoidal functor $f$  has a right
adjoint in \Cat, then it induces  a monoidal comonad in \Cat on $M$; thus it
gives rise to a monoidal comonad in \Prof. 
If a functor $f$ not only has a right adjoint but is comonadic,
then to say that $f$ is strong monoidal is equivalent to saying that the
induced comonad is monoidal. 

We record this as:

\begin{example}\label{ex:Prof}
If $M$ is a monoidal category with left and right duals, then the category 
$\Prof(M,M)$ of profunctors from $M$ to $M$ is duoidal. Any monoidal comonad 
on $M$ can be seen as a bimonoid in $\Prof(M,M)$. 
\end{example}

\section{Transforms}

In this section we describe a ``transform'' process relating two
isomorphic categories. It is analogous to the isomorphism between the algebra
of $H$-module and $H$-comodule homomorphisms $H\ox H \to H\ox H$, and 
the convolution algebra $\End(H)$, for a Hopf algebra $H$. It will play a key
role in our treatment of antipodes in the following section. 

We suppose throughout this section that $M=(M,m,u)$ is a naturally Frobenius
map-monoidale in the monoidal bicategory \cm, that $(b,\mu,\eta)$ is a monoid
with respect to the convolution $\bullet$, and that $(c,\delta,\varepsilon)$
is a comonoid with respect to the composition $\circ$ in the duoidal category
$\cm(M,M)$.

\begin{claim}[{\bf A category of mixed algebras}] \label{sec:BTG} 
Let \cb be the category $\cm(M^2,M)$ of all morphisms from $M^2$ to $M$. 
There is an induced comonad $\cm(cM,M)$ on \cb sending
$x\colon M^2\to M$ to $x.cM$. We call this comonad $G$, and
write $\cb^{G}$ for the category of $G$-coalgebras.  

There is also a monad $T$ on \cb sending $x\colon M^2\to M$ to the composite 
$$\xymatrix{
M^2 \ar[r]^-{Mm^*} & M^3 \ar[r]^-{xM} & M^2 \ar[r]^-{Mb} & 
M^2 \ar[r]^-{m} & M}$$
and with multiplication and unit given by the 2-cells
$$\xymatrix{
M^3 \ar[r]^-{Mm^*M} & M^4 \ar[r]^-{xM^2} & M^3 \ar[r]^-{Mbb}_{~}="1" & 
M^3 \ar[r]^-{mM} \ar[d]^-{Mm} & M^2 \ar[d]^-{m} \\
M^2 \ar[u]^-{Mm^*} \ar[r]_-{Mm^*} & M^3 \ar[u]^-{M^2m^*} \ar[r]_-{xM} & 
M^2 \ar[u]^-{Mm^*} \ar[r]_-{M b }^{~}="2"  & M^2 \ar[r]_-{m} & M 
\ar@{=>}"1";"2"^{M\mu} }$$
$$\xymatrix{ 
& M^2 \ar[r]^-{x} & M \ar@{=}[r]_{~}="1" & 
M \ar@{=}@/^1.2pc/[rd] \ar[d]^-{Mu} & 
\\
M^2 \ar@{=}@/^1.2pc/[ur] \ar[r]_-{Mm^*} & M^3 \ar[u]^-{M^2u^*} 
\ar[r]_-{xM} & 
M^2 \ar[u]^-{Mu^*} \ar[r]_-{Mb}^{~}="2"  & M^2 \ar[r]_-{m} & M  .
\ar@{=>}"1";"2"^{M\eta} }$$
We write $\cb^{T}$ for the category of algebras for this monad. An algebra
structure on $x$ translates to a 2-cell  
$$\xymatrix{
M^3 \ar[r]^-{xb}_{~}="1" \ar[d]_-{Mm} & M^2 \ar[d]^-{m} \\ 
M^2 \ar[r]_-{x}^{~}="2" & M \ar@{=>}"1";"2" }$$
satisfying associativity and unit conditions.

By functoriality of the tensor in \cm, there is an isomorphism $TG\cong GT$ of
functors, and a straightforward calculation shows that this defines a mixed
distributive law between the monad $T$ and comonad $G$. We write $\cb^{(T,G)}$
for the category of mixed algebras with respect to this distributive law. By 
the general theory of mixed distributive laws, $G$ lifts to a comonad on 
$\cb^{T}$ whose category of coalgebras is $\cb^{(T,G)}$, and $T$ lifts to a 
monad on $\cb^{G}$ whose category of algebras is $\cb^{(T,G)}$.  

In particular, let $x$ be the object $Mu^*\colon M^2\to M$. Then $GTx$ is the
composite   
$$\xymatrix{
M^2 \ar[r]^-{c  M} & M^2 \ar[r]^-{Mm^*} & M^3 \ar[r]^-{Mu^*M} & 
M^2 \ar[r]^-{Mb } &
M^2 \ar[r]^-{m} & M }$$
which, by counitality of $m^*$, is isomorphic to $m.cb \colon
M^2\to M$.  

Now let $y$ be the object $u.u^*.m\colon M^2\to M$. Then $GTy$ is the upper
composite in the diagram 
$$\xymatrix{
& M^3 \ar[r]^-{mM} & M^2 \ar[r]^-{u^*M} & M \ar[r]^-{uM} \ar[dr]_-{b } & 
M^2 \ar[r]^-{Mb } & M^2 \ar[d]^-{m} \\
M^2 \ar[r]_-{c  M} & M^2 \ar[u]^-{Mm^*} \ar[r]_-{m} & 
M \ar[u]^-{m^*} \ar@{=}[ur] && 
M \ar[ur]^-{uM} \ar@{=}[r] & M 
}$$
but using pseudofunctoriality of tensor in \cm, unitality of $m$, counitality
of $m^*$, and one of the Frobenius isomorphisms, we see that this is in fact
isomorphic to $b .m.cM$.  
\end{claim}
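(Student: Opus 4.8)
The plan is to verify the assertions of the Construction in the order in which they are stated, treating the comonad, monad, distributive-law, and lifting claims as essentially formal, and reserving the real work for the two explicit identifications of $GT$. I would first record that $G$ is a comonad: tensoring on the right by $M$ is a strong monoidal functor $\cm(M,M)\to\cm(M^2,M^2)$ for the composition products, so the comonoid $(c,\delta,\varepsilon)$ is carried to a comonoid $cM$ in $\cm(M^2,M^2)$, and a comonoid acting on the right module category $\cb=\cm(M^2,M)$ by precomposition induces a comonad $G=(-).cM$, with comultiplication and counit arising from $\delta M$ and $\varepsilon M$. For the monad $T$, I would check associativity and unitality of the displayed multiplication and unit by a pasting computation relying on associativity and unitality of the monoid multiplication $\mu$ of $b$, the associativity and unit constraints of $m$, counitality of $m^*$, and a triangle identity for $m\dashv m^*$; the reduction of a $T$-algebra structure to the simpler 2-cell $m.xb\Rightarrow x.Mm$ is obtained by pasting with the unit $\eta_m$, exactly as in the bijection between $\mu$ and $a_2$ of Section~\ref{sec:monoidal_morphisms}. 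For the distributive law, the isomorphism $TG\cong GT$ is the interchange expressing that $c$ acting on the first factor commutes with $m^*$ acting on the second; I would then confirm the four compatibility axioms of a mixed distributive law, which reduce to coherence since every 2-cell in sight is built from interchange and the (co)monad structure maps. The lifting statements and the identification of $\cb^{(T,G)}$ as coalgebras over $\cb^T$ and as algebras over $\cb^G$ are then immediate from the general theory.

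The substance lies in the two computations. For $x=Mu^*$ I would observe that in $GT(Mu^*)=m.Mb.(Mu^*M).Mm^*.cM$ the composite $(Mu^*M).Mm^*$ is the image under $M\otimes-$ of $(u^*\otimes 1).m^*$, hence collapses to $1_{M^2}$ by counitality of $m^*$, while $(Mb).(cM)=c\otimes b$ by interchange; what remains is $m.cb$. For $y=u.u^*.m$ I would expand $yM=(uM).(u^*M).(mM)$ and simplify $GT(y)=m.Mb.(uM).(u^*M).(mM).Mm^*.cM$ in three moves: rewrite $(mM).(Mm^*)=(m\otimes 1)(1\otimes m^*)$ as $m^*.m$ using the invertibility of one of the Frobenius cells $\pi,\pi'$; cancel $(u^*M).m^*\cong 1_M$ by counitality of $m^*$; and collapse $m.(Mb).(uM)=m.(u\otimes b)\cong b$ using left unitality $\lambda$ of $m$ together with interchange. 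This yields $b.m.cM$, and the pasting diagram in the Construction is precisely the bookkeeping that threads these three reductions.

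The step I expect to be the main obstacle is the first reduction in the computation of $GT(y)$, since it is exactly here that the naturally Frobenius hypothesis is used, through the invertibility of $\pi$ (equivalently $\pi'$) to replace $(m\otimes 1)(1\otimes m^*)$ by $m^*.m$. Tracking which tensor factor each of $u,u^*,b,c$ and each structure constraint acts upon, and routing the Frobenius isomorphism correctly through the surrounding data, is the delicate part; by comparison the monad axioms and the distributive-law axioms, though they demand some patience, raise no conceptual difficulty once the interchange isomorphism $TG\cong GT$ is written out explicitly.
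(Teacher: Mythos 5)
Your proposal is correct and follows essentially the same route as the paper: the comonad, monad, and distributive-law assertions are treated as formal consequences of pseudofunctoriality and interchange, and the two identifications of $GT$ are carried out by exactly the reductions the paper indicates (counitality of $m^*$ and interchange for $GTx$; the Frobenius isomorphism $mM.Mm^*\cong m^*.m$, counitality of $m^*$, interchange, and unitality of $m$ for $GTy$). You also correctly locate the only use of the naturally Frobenius hypothesis in the computation of $GTy$.
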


\begin{proposition}\label{prop:transform}
The full subcategory of $\cb^{(T,G)}$, determined by the two objects $GTx$ and
$GTy$ of Construction \ref{sec:BTG}, is isomorphic to a category
$\ct=\ct^c_b$ with objects $X$ and $Y$ in 
which:
\begin{itemize}
\item a morphism $X\to X$ is a morphism $c  \to i\bullet(j \circ b)$ in
  $\cm(M,M)$   
\item a morphism $X\to Y$ is a morphism $c  \to b $ in $\cm(M,M)$
\item a morphism $Y\to X$ is a morphism $c  \to b^-$ in $\cm(M,M)$   
\item a morphism $Y\to Y$ is a morphism $c  \to (b\circ j)\bullet i$ in
  $\cm(M,M)$.  
\end{itemize}
The identity on $X$ is given by 
$$\xymatrix{
c  \ar[r]^-{\epsilon} & 
i \ar@{=}[r] & i\bullet j \ar[r]^-{i\bullet \xi^0} & 
i\bullet(j\circ j) \ar[r]^-{i\bullet (j\circ\eta)} & 
i\bullet (j  \circ b) }$$
and the identity on $Y$ by 
$$\xymatrix{
c  \ar[r]^-{\epsilon} & 
i \ar@{=}[r] & j\bullet i \ar[r]^-{\xi^0\bullet i} & 
(j\circ j)\bullet i \ar[r]^-{(\eta\circ j )\bullet i} & 
(b\circ j ) \bullet i . }$$
The composites of $\sigma\colon X\to Y$ and $\tau\colon Y\to X$ are given by 
$$
\xymatrix{
c  \ar[r]^-{\delta} & 
c \circ c  \ar[r]^-{\sigma\circ\tau} & 
b \circ b ^- \ar[r]^-{\phi_{b ,b }} & 
((b\bullet b)\circ j)\bullet i\ar[r]^-{(\mu\circ j)\bullet i} &  
(b\circ j)\bullet i
}$$
and
$$\xymatrix{
c  \ar[r]^-{\delta} & 
c \circ c  \ar[r]^-{\tau\circ\sigma} & 
b ^-\circ b  \ar[r]^-{\psi_{b ,b }} & 
i\bullet  (j\circ (b\bullet b) ) \ar[r]^-{i\bullet(j\circ\mu)} & 
i\bullet (j\circ b) . }$$
\end{proposition}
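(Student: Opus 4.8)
The plan is to exhibit the claimed isomorphism by computing the four hom-sets of the full subcategory of $\cb^{(T,G)}$ on $GTx$ and $GTy$, and then transporting the identities and composition. Throughout I use the mixed-distributive-law structure recalled in Construction~\ref{sec:BTG}: $G$ lifts to a comonad $\tilde G$ on $\cb^T$ whose cofree functor is right adjoint to the forgetful functor $\cb^{(T,G)}\to\cb^T$, and $T$ lifts to a monad $\tilde T$ on $\cb^G$ whose free functor is left adjoint to the forgetful functor $\cb^{(T,G)}\to\cb^G$. The point is that $GTx$ and $GTy$ are of the special ``cofree-over-free'' form: each is simultaneously a cofree $G$-coalgebra on a free $T$-algebra $Tx$, $Ty$ and, via the distributive-law isomorphism $TG\cong GT$, a free $T$-algebra on a cofree $G$-coalgebra $Gx$, $Gy$.

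First I would reduce each $\cb^{(T,G)}$-hom-set to a hom-set in $\cb=\cm(M^2,M)$ by two adjunction steps. Realizing the source as a free $T$-algebra on a cofree $G$-coalgebra and using that $\tilde T$ is left adjoint to the forgetful functor $\cb^{(T,G)}\to\cb^G$ turns the hom-set into a $\cb^G$-hom-set out of a cofree $G$-coalgebra; since the underlying $G$-coalgebra of a cofree mixed algebra is again cofree (the Beck--Chevalley isomorphism for the lifting square of Construction~\ref{sec:BTG}), the cofree adjunction then collapses this to a plain $\cb$-hom-set of the form $\cm(M^2,M)(Gp,Tq)$ with $p,q\in\{x,y\}$.

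Next I would identify these $\cb$-hom-sets with $\cm(M,M)(c,-)$. Using the explicit forms $GTx\cong m.cb$ and $GTy\cong b.m.cM$ from Construction~\ref{sec:BTG}, together with counitality of $m^*$ and $u^*$ and the triangle identities for $m\dashv m^*$ and $u\dashv u^*$, the extra tensor factor of $M$ collapses and each hom-set becomes a set of $2$-cells out of $c$. Three corners land immediately on the stated targets: $X\to X$ on $i\bullet(j\circ b)$, $X\to Y$ on $b$, and $Y\to Y$ on $(b\circ j)\bullet i$. The corner $Y\to X$, namely $\cm(M^2,M)(Gy,Tx)$, is the subtle one: the asymmetry between $y=u.u^*.m$ and $x=Mu^*$ is exactly what the self-duality of the naturally Frobenius $M$ converts into the mate, so the transform produces $b^-$ and the hom-set becomes $\cm(M,M)(c,b^-)$. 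I expect this corner — and the careful bookkeeping of the Frobenius isomorphisms, the (co)units, and the distributive-law isomorphism needed to land precisely on $b^-$ — to be the main obstacle, since it is here that the naturally Frobenius hypothesis genuinely enters.

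Finally I would transport the categorical structure. Tracing the units of the two adjunctions through the bijections yields the stated formulas for the identities on $X$ and $Y$. For composition, ordinary composition of mixed-algebra maps corresponds, after the transform, to comultiplying by $\delta$, applying the two factors in parallel, and reassembling by the structure $2$-cells: the cross composites $Y\to Y$ and $X\to X$ pick up $\phi_{b,b}$ and $\psi_{b,b}$ respectively, followed by the multiplication $\mu$, giving exactly the displayed composites. Verifying that these satisfy associativity and the unit laws is the remaining, coherence-heavy part; it is precisely here that Lemma~\ref{lem:theta} is used, its $2$-cell $\vartheta$ mediating the triple composites, with property~(iii) reconciling the $\phi$- and $\psi$-forms via $\Xi$, $\xi$ and $\Upsilon$, property~(iv) handling the interaction with $\xi^0$ coming from $\delta$, and properties~(i)--(ii) supplying naturality and the reduction to $\phi$, while Lemma~\ref{lem:phi-psi-identities} provides the compatibilities of $\phi$ and $\psi$ with $\xi$ and $\Upsilon$.
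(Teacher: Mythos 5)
Your reduction is essentially the paper's: the two adjunction steps collapse each hom-set $\cb^{(T,G)}(GTw,GTz)$ to $\cb(Gw,Tz)$ (the paper applies the cofree adjunction on the codomain first and then the free one on the domain, you do it in the other order; both work), and the further adjunctions $u\dashv u^*$ and $Mu\dashv Mu^*$, together with the self-duality of $M$, produce the four stated hom-sets, after which identities and composites are obtained by transport. Two corrections, though. First, a point of emphasis: \emph{both} corners with domain $Y$ --- not only $Y\to X$ --- land in $\cm(M^2,I)$ and hence pass through the self-duality $\cm(M^2,I)\simeq\cm(M,M)$ supplied by the naturally Frobenius structure; what distinguishes $Y\to X$ is only that the resulting codomain is $b^-$ rather than $(b\circ j)\bullet i$. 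Second, and more substantively, your final paragraph is misconceived: since $\ct$ is \emph{defined} by transporting the structure of the honest category $\cb^{(T,G)}$ along bijections, associativity and the unit laws hold automatically and need no verification; the only remaining work is the (routine but tedious) computation showing that the transported identities and the composites of $\sigma\colon X\to Y$ with $\tau\colon Y\to X$ are given by the displayed formulas involving $\phi_{b,b}$ and $\psi_{b,b}$. In particular, Lemmas~\ref{lem:phi-psi-identities} and~\ref{lem:theta} play no role in this proposition; they are used only later, in Propositions~\ref{prop:T-bullet} and~\ref{prop:T-circ}, to manipulate composites inside $\ct$ once it has been constructed. This does not break your argument, but it mislocates where the real work lies and what the quoted lemmas are for.
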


\proof 
The hom-sets of the full subcategory of $\cb^{(T,G)}$  in question 
each have the form $\cb^{(T,G)}(GTw,GTz)$ for suitable $w$ and $z$. By the
universal property of the cofree coalgebra $GTz$, this is isomorphic to
$\cb^T(GTw,Tz)$; but $GTw\cong TGw$ which is free on $Gw$, and so this in turn
is isomorphic to $\cb(Gw,Tz)$. We may now use the isomorphisms
$\cb^{(T,G)}(GTw,GTz)\cong \cb(Gw,Tz)$ to construct an isomorphic category
$\ct'$ with hom-sets given by the $\cb(Gw,Tz)$.  

Write $X'$ and $Y'$ for the objects of $\ct'$ corresponding to $GTx$ and 
$GTy$. 
Since $Gx\cong c .Mu^*$, $Tx\cong m.Mb $,
$Gy\cong u.u^*.m.c  M$, and $Ty\cong b .m$, the morphisms of 
$\ct'$ may be described as follows: 
\begin{itemize}
\item a morphism $X'\to X'$ is a 2-cell $c .Mu^*\to m.Mb$; 
\item a morphism $X'\to Y'$ is a 2-cell $c .Mu^*\to b.m$; 
\item a morphism $Y'\to X'$ is a 2-cell $u.u^*.m.c  M\to m.Mb $; 
\item a morphism $Y'\to Y'$ is a 2-cell $u.u^*.m.c  M\to b .m$. 
\end{itemize}

We now use various adjunctions to obtain a further isomorphic category
$\ct''$, with objects $X''$ and $Y''$ corresponding to $X'$ and $Y'$, in 
which  
\begin{itemize}
\item a morphism $X''\to X''$ is a 2-cell $c  \to m.Mb.Mu$ in \cm 
\item a morphism $X''\to Y''$ is a 2-cell $c  \to b$ in \cm 
\item a morphism $Y''\to X''$ is a 2-cell $u^*.m.c  M\to u^*.m.Mb $ in \cm 
\item a morphism $Y''\to Y''$ is a 2-cell $u^*.m.c  M\to u^*.b .m$ in
  $\cm$.  
\end{itemize}
First of all, because of the adjunction $u\dashv u^*$, 2-cells
$u.u^*.m.c  M\to m.Mb $ are in bijection with
2-cells $u^*.m.c  M\to u^*.m.Mb $, and similarly
2-cells $u.u^*.m.c  M\to b .m$ are in bijection
with 2-cells $u^*.m.c  M\to u^*.b .m$. Next use
the adjunction $Mu\dashv Mu^*$ to see that 2-cells $c .Mu^*\to m.Mb $ are in
bijection with 2-cells $c \to m.Mb .Mu=i\bullet (j\circ b)$, and
similarly that 2-cells $c.Mu^*\to b .m$ are in bijection with 2-cells $c \to
b.m.Mu$, and finally use a unitality isomorphism $m.Mu\cong 1$ for $M$.  

The desired category $\ct$ has morphisms $X\to X$ and $X\to Y$ as in
$\ct''$. The morphisms in $\ct''$ with domain $Y''$ are morphisms in the
hom-category $\cm(M^2,I)$. In light of the duality $M\dashv M$, the
hom-category $\cm(M^2,I)$ is equivalent to $\cm(M,M)$. 
A morphism $\tau\colon Y''\to X''$ determines a 2-cell 
$$\xymatrix{
&& M \ar[dr]_-{m^*} \ar@{=}[drr] \\
M \ar[r]_-{Mu} \ar@{=}[urr] & M^2 \ar[r]_-{Mm^*} \ar[ur]_-{m} & 
M^3 \ar[r]_-{mM} & 
M^2 \ar[r]_-{u^*M} \ar@{=>}[d]^-{\tau M} & M \\
M \ar[r]_-{Mu} \ar[u]^-{c } & 
M^2 \ar[r]_-{Mm^*} & 
M^3 \ar[u]^-{c  M^2} \ar[r]_-{Mb  M} & 
M^3 \ar[r]_-{mM} & M^2 \ar[u]_-{u^*M} 
}$$
from $c $ to $b ^-$ and this process is
bijective. Similarly a morphism $\sigma\colon Y''\to Y''$ determines a 2-cell  
$$\xymatrix{
&& M \ar[dr]_-{m^*} \ar@{=}[drr] \\
M \ar[r]_-{Mu} \ar@{=}[urr] & M^2 \ar[r]_-{Mm^*} \ar[ur]_-{m} & 
M^3 \ar[r]_-{mM} & 
M^2 \ar[r]_-{u^*M} \ar@{=>}[d]^-{\sigma M} & M \\
M \ar[r]^-{Mu} \ar[u]^-{c } \ar@{=}[drr] & 
M^2 \ar[r]^-{Mm^*} \ar[dr]^-{m} & 
M^3 \ar[u]^-{c  M^2} \ar[r]^-{mM} & 
M^2 \ar[r]^-{b  M} & M^2 \ar[u]_-{u^*M} \\
&& M \ar[ur]^-{m^*} 
}$$
from $c $ to $u^*M.b  M.m^*= (b \circ j)\bullet i$, and this
process is once again bijective.

The bijections described above yield the morphism map of the stated
category isomorphism; the resulting compositions (of the particular morphisms
$X\to Y$ and $Y\to X$) and the identity morphisms in $\ct$ come out as in the
claim. 
\endproof

The morphism in $\ct=\ct^c_b$ corresponding to a morphism $f$ in
$\cb^{(T,G)}$ will be called the {\em transform} of $f$.  

\begin{remark}\label{rem:symmetry_of_T}
In Proposition \ref{prop:transform}, we associated a category $\mathcal T^c_b$
to a naturally Frobenius map-monoidale $M$ in a monoidal bicategory $\mathcal
M$ equipped with a convolution monoid $b$ and a composition comonoid $c$ in
the induced duoidal category $\mathcal M(M,M)$. Replacing the naturally
Frobenius map-monoidale $M=(M,m,u)$ in $\mathcal M$ with $M^*=(M,m^*,u^*)$ in
$\mathcal M^{\oprev}$, we may regard $b$ as a convolution monoid, and we may
regard $c$ as a composition comonoid in the duoidal category $\mathcal
M(M^*,M^*)$. Hence there is an associated category as in  Proposition
\ref{prop:transform}; which is in turn the opposite of the category $\mathcal
T^c_b$.
\end{remark}

The category $\ct^c_b$ in Proposition \ref{prop:transform} depended on a
comonoid $c$ and a monoid $b$. Next we observe that it is functorial in these
inputs. Since all of the constructions involved in the definition of the
transform category are clearly natural in $c$ and $b$, we deduce:

\begin{proposition}\label{prop:T-functoriality}
If $f\colon(c,\delta,\epsilon)\to(c',\delta',\epsilon')$ is a morphism of
comonoids, then there is an induced functor $\ct^{c'}_b\to \ct^c_b$ which
fixes the objects $X$ and $Y$, and which acts on morphisms by composition with
$f$. Similarly if $g\colon(b,\mu,\eta)\to(b',\mu',\eta')$ is a morphism of
monoids, then there is an induced functor $\ct^c_b\to \ct^c_{b'}$ which fixes
the objects; it acts on morphisms by composition with $i\bullet (j\circ g)$,
with $g$, with $g^-$, or with $(g\circ j)\bullet i$, as the case may be.  
\end{proposition}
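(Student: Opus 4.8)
The plan is to reduce the claim to the two sentences that precede it: that the transform category $\ct^c_b$ is built by a chain of natural isomorphisms, and that every ingredient of that chain is natural in $c$ and $b$. So the proof should be a matter of unwinding what the functors act on and checking naturality at the level of the underlying data, rather than of constructing anything new. First I would treat the two halves separately, since $c$ and $b$ enter the construction in genuinely different ways.

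For the comonoid input, a morphism $f\colon(c,\delta,\epsilon)\to(c',\delta',\epsilon')$ must be turned into a functor $\ct^{c'}_b\to\ct^c_b$ — note the variance: comonoid maps go one way, the induced functor the other. I would trace $f$ through the description in Proposition~\ref{prop:transform}: each hom-set of $\ct^c_b$ is a set of 2-cells with domain $c$ (namely $c\to i\bullet(j\circ b)$, $c\to b$, $c\to b^-$, or $c\to(b\circ j)\bullet i$), so a comonoid map $f\colon c\to c'$ induces the obvious precomposition map on each hom-set, sending a 2-cell out of $c'$ to one out of $c$. The objects $X,Y$ depend only on the pair $(x,y)=(Mu^*,\,u.u^*.m)$ and not on $c$, so they are fixed. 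What requires checking is that this precomposition is functorial, i.e. that it respects the composite formulae for $\sigma\circ\tau$ and $\tau\circ\sigma$ and the identity 2-cells displayed in the proposition. For composition this reduces to the interaction of $f$ with the comultiplications: one uses that $f$ is a comonoid map, so $\delta'.f = (f\circ f).\delta$, to slide $f$ past the $\sigma\circ\tau$ (respectively $\tau\circ\sigma$) step; for the identities one uses counitality, $\epsilon = \epsilon'.f$, to absorb $f$ into the leading $\epsilon$.

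For the monoid input, a morphism $g\colon(b,\mu,\eta)\to(b',\mu',\eta')$ should give $\ct^c_b\to\ct^c_{b'}$, covariantly. Here $b$ appears in the \emph{codomains} of the defining 2-cells, so $g$ acts by postcomposition, but — and this is the point of the four clauses — it appears in four different guises ($i\bullet(j\circ b)$, $b$, $b^-$, $(b\circ j)\bullet i$), so the induced map on each hom-set is postcomposition with $i\bullet(j\circ g)$, with $g$, with $g^-$, or with $(g\circ j)\bullet i$ respectively. Functoriality on identities again follows from counitality and the fact that the unit maps $\eta$ satisfy $g.\eta=\eta'$. Functoriality on composites is where the only real content lies: one must check that postcomposing the $\sigma\circ\tau$ formula by $(g\circ j)\bullet i$ agrees with first applying $g^-$ to $\tau$ and $g$ to $\sigma$. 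This amounts to compatibility of $g$ with the maps $\phi$ and $\psi$ — concretely, that $\phi_{b',b'}.(g\circ g^-) = ((g\bullet g)\circ j)\bullet i\,.\,\phi_{b,b}$ and the dual identity for $\psi$ — together with $g$ being a monoid map so that $\mu'.(g\bullet g)=g.\mu$.

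The main obstacle, then, is exactly this last naturality square for $\phi$ and $\psi$ together with the appearance of the \emph{mate} $g^-$: one needs that $(-)^-$ is functorial (so that $g^-$ composes correctly) and that $\phi_{f,g}$, being \emph{natural in $f$ and $g$} as asserted when it was introduced, is compatible with replacing $g$ by $g^-$ for a monoid map $g$. Since both $\phi$ and $\psi$ were defined as explicit pasting composites and were already stated to be natural, and since $(-)^-$ is a functor coming from the monoidal biequivalence of Section~\ref{sect:bicategorical-dual}, these compatibilities hold; but verifying them is the step that is not purely formal, and it is the one I would write out. Everything else is bookkeeping, which is why I would state at the outset that \emph{all of the constructions involved in the definition of the transform category are natural in $c$ and $b$} and then confine the argument to displaying the four clauses and confirming the two composition squares.
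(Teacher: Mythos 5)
Your proposal is correct and takes essentially the same route as the paper, which dispatches this proposition in a single sentence by observing that all the constructions entering the definition of $\ct^c_b$ are natural in $c$ and $b$. Your explicit verification — precomposition for $f$ using the comonoid axioms, postcomposition with the four incarnations of $g$ using the monoid axioms together with the asserted naturality of $\phi$ and $\psi$ and the functoriality of $(-)^-$ — is precisely what that sentence compresses.
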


As well as the functoriality condition given in Proposition
\ref{prop:T-functoriality}, we shall also need to look at transforms
involving tensored monoids or comonoids.

\begin{proposition}\label{prop:T-bullet}
Let $c$ and $d$ be composition-comonoids, and let $b$ be a convolution-monoid
in the duoidal category $\cm(M,M)$ associated to a naturally Frobenius map-
monoidale $M$ in a monoidal bicategory $\cm$. Suppose that $\sigma\colon c\to
b$ and $\sigma'\colon c\to b^-$ define an inverse pair  $X\cong Y$ in
$\ct^c_b$, and similarly that $\tau\colon d\to b$ and $\tau'\colon d\to b^-$
define an inverse pair in $\ct^d_b$. Then the composites  
$$\xymatrix{
c\bullet d \ar[r]^-{\sigma\bullet\tau} & b\bullet b\ar[r]^-{\mu} & b &
c\bullet d \ar[r]^-{\sigma'\bullet\tau'} & b^-\bullet b^- 
\ar[r]^-{\Upsilon_{b,b}} & 
(b\bullet b)^- \ar[r]^-{\mu^-} & b^- }$$
define an inverse pair in $\ct^{c\bullet d}_b$. 
\end{proposition}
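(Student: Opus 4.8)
The plan is to verify directly that the two composites $c\bullet d\to b$ and $c\bullet d\to b^-$ defined in the statement are inverse to each other in $\mathcal T^{c\bullet d}_b$, by unwinding the two composition laws of the transform category given in Proposition~\ref{prop:transform}. Since $\sigma,\sigma'$ and $\tau,\tau'$ are assumed to be inverse pairs, I would use both the identity-on-$X$ and identity-on-$Y$ formulas from that proposition as the two targets to match. The strategy is to set up the relevant composite in $\mathcal T^{c\bullet d}_b$ and rewrite it using (a) the coherence identities of Lemma~\ref{lem:phi-psi-identities} relating $\phi$, $\psi$ and $\xi$, $\Upsilon$, and (b) the compatibility of comultiplication $\delta$ with the tensor $\bullet$ coming from the duoidal structure, so that the $c\bullet d$-level data factors through the separate $c$-level and $d$-level inversion hypotheses.

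First I would write out, for the proposed morphism $\sigma\bullet\tau$ followed by $\mu$ (a morphism $X\to Y$ in $\mathcal T^{c\bullet d}_b$) and the proposed morphism $\Upsilon_{b,b}^{-1}$-style composite $\mu^-.\Upsilon_{b,b}.(\sigma'\bullet\tau')$ (a morphism $Y\to X$), their composite $X\to X$ using the $\psi$-formula of Proposition~\ref{prop:transform}: start from $(c\bullet d)$, apply $\delta$ for the comonoid $c\bullet d$, then the tensor of the two given maps, then $\psi_{b,b}$, then $i\bullet(j\circ\mu)$. The key move is that $\delta_{c\bullet d}$ is, by the duoidal compatibility, expressible through $\delta_c\bullet\delta_d$ together with the interchange $\xi$; this lets me split the computation into the $c$-factor and $d$-factor. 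I would then invoke Lemma~\ref{lem:phi-psi-identities} (the first diagram, involving $\psi$, $\Upsilon$, $\xi$, and $\xi_0$) to reorganize $\psi_{b,b}$ against $\Upsilon_{b,b}$ and the interchange maps, reducing the whole expression to a tensor (over $\bullet$) of the two separate composites $\psi_{b,b}\circ(\sigma'\circ\sigma)\circ\delta_c$ and its $d$-analogue. By hypothesis each of these equals the identity-on-$X$ morphism of $\mathcal T^c_b$ (resp.\ $\mathcal T^d_b$), and I would then recombine, using the unit coherence $\xi^0$, $\xi^0_0$, and the monoid unit $\eta$, to recognize the result as the identity-on-$X$ of $\mathcal T^{c\bullet d}_b$. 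The composite in the other order, giving the identity-on-$Y$, is handled symmetrically using the second diagram of Lemma~\ref{lem:phi-psi-identities}.

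The main obstacle I expect is the bookkeeping in the middle step: matching the interchange isomorphism $\xi$ coming from $\delta_{c\bullet d}$ against the $\Upsilon_{b,b}$ appearing in the definition of the $Y\to X$ morphism, so that the mates $(-)^-$ distribute correctly over $\bullet$. This is precisely where Lemma~\ref{lem:phi-psi-identities} is designed to intervene, so the real work is orienting that coherence diagram correctly and checking that the $\xi_0$ and $\xi^0$ terms land exactly where the identity morphisms of Proposition~\ref{prop:transform} require them; once the factorization through the two separate inversion hypotheses is established, the remaining verifications are routine applications of naturality and the duoidal unit axioms.
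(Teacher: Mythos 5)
Your proposal matches the paper's proof in all essentials: the same starting point $\delta_{c\bullet d}=\xi.(\delta_c\bullet\delta_d)$, the same key input (Lemma~\ref{lem:phi-psi-identities}) to reconcile $\psi$ (resp.\ $\phi$) with $\Upsilon$ and $\xi$, and the same sequential use of the two inverse hypotheses together with associativity of $\mu$ and the unit coherences. The only cosmetic difference is that the paper verifies just one inverse law by the diagram chase of Figure~\ref{fig} and obtains the other from the symmetry of Remark~\ref{rem:symmetry_of_T}, whereas you propose to run the dual chase explicitly with the second diagram of Lemma~\ref{lem:phi-psi-identities}.
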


\proof
Consider the upper diagram in Figure~\ref{fig}. The upper path gives one
composite in $\ct^{c\bullet d}_b$ of the two displayed morphisms. All of the
small quadrilaterals commute by functoriality of $\bullet$ or $\circ$, or by
naturality of $\psi$ or $\xi$. The large central region commutes by
Lemma~\ref{lem:phi-psi-identities}. The large region on the left commutes by
the fact that $\tau$ and $\tau'$ are mutually inverse in $\ct^d_b$. The
square-shaped pentagon on the right commutes by associativity of $\mu$. The
two triangular regions commute because $\eta$ is a unit for $\mu$, and
counitality of $\xi^0$. Now use the fact that $\sigma$ is inverse to $\sigma'$
in $\ct^c_b$ to show that the lower path is equal to an identity in
$\ct^{c\bullet d}_b$.

This gives one of the inverse laws; the other follows by  the symmetry
described in Remark~\ref{rem:symmetry_of_T}. 
\endproof

\begin{proposition}\label{prop:T-circ}
Let $c$ be a composition-comonoid, and let $a$ and $b$ be convolution-monoids
in the duoidal category $\cm(M,M)$ associated to a naturally Frobenius map-
monoidale $M$ in a monoidal bicategory $\cm$. Suppose that $\sigma\colon c\to
a$ and $\sigma'\colon c\to a^-$ define an inverse pair in $\ct^c_a$, and
similarly that $\tau\colon c\to b$ and $\tau'\colon c\to b^-$ define an
inverse pair in $\ct^c_b$. Then the composites 
$$
\xymatrix{
c \ar[r]^-{\delta} & c\circ c \ar[r]^-{\sigma\circ\tau} & a\circ b &
c \ar[r]^-{\delta} & c\circ c \ar[r]^-{\tau'\circ\sigma'} & b^-\circ a^-
\ar[r]^-{\Xi} & (a\circ b)^- }
$$ 
define an inverse pair in $\ct^c_{a\circ b}$.   
\end{proposition}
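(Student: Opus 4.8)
The plan is to prove the two inverse laws separately, establishing the first by an explicit pasting diagram and obtaining the second by duality. First I regard $a\circ b$ as a convolution-monoid: since $(\circ,\xi,\xi^0)$ is monoidal for $\bullet$, the product $a\circ b$ acquires the multiplication $(a\circ b)\bullet(a\circ b)\xrightarrow{\xi}(a\bullet a)\circ(b\bullet b)\xrightarrow{\mu\circ\mu}a\circ b$ and unit $j\xrightarrow{\xi^0}j\circ j\xrightarrow{\eta\circ\eta}a\circ b$. Writing $\Sigma\colon X\to Y$ and $\Sigma'\colon Y\to X$ for the two composites in the statement, I shall show that $\Sigma\Sigma'=1_Y$ in $\ct^c_{a\circ b}$; that is, I expand the $Y\to Y$ composition formula of Proposition~\ref{prop:transform} with $b$ replaced by $a\circ b$, using coassociativity of $\delta$ to split $c$ into four factors carrying $\sigma,\tau,\tau',\sigma'$.

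This is the $\circ$-analogue of Proposition~\ref{prop:T-bullet}, and here Lemma~\ref{lem:theta} plays the part that Lemma~\ref{lem:phi-psi-identities} played there. The heart of the argument is a large diagram, analogous to the one in Figure~\ref{fig}, whose central region is part~(iii) of Lemma~\ref{lem:theta} with $f=k=a$ and $g=h=b$. This converts the combined map $\phi_{a\circ b,a\circ b}$ precomposed with $1\circ1\circ\Xi_{a,b}$ into the inner map $\phi_{b,b}$ followed by $\vartheta_{a,b\bullet b,a}$, the residual map $(\xi\circ1)\bullet1$ being exactly the $\xi$-component of the multiplication of $a\circ b$. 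Naturality of $\vartheta$ in its middle variable (part~(i)) then lets me slide $\mu\colon b\bullet b\to b$ past $\vartheta$, so that the inner string $c\xrightarrow{\delta}c\circ c\xrightarrow{\tau\circ\tau'}b\circ b^-\xrightarrow{\phi_{b,b}}((b\bullet b)\circ j)\bullet i\xrightarrow{(\mu\circ j)\bullet i}(b\circ j)\bullet i$ becomes precisely the composite of $\tau$ and $\tau'$ in $\ct^c_b$. The hypothesis that $\tau,\tau'$ are mutually inverse collapses this to the counit $\epsilon$, and counitality of $\delta$ removes the now-redundant comultiplication; the $b$-layer has disappeared.

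After this collapse the surviving $\vartheta$ has middle argument built from $\eta\colon j\to b$ together with the unit-coherence $i\circ j\cong j$; parts~(iv) and~(ii) of Lemma~\ref{lem:theta} — the former governing the $\xi^0$-insertion and the latter giving $\vartheta_{a,i,a}=\phi_{a,a}$ — identify it with $\phi_{a,a}$. What remains is exactly the composite of $\sigma$ and $\sigma'$ in $\ct^c_a$, so the hypothesis that $\sigma,\sigma'$ are inverse collapses the $a$-layer as well, and the two units assemble through $\xi^0$ into the unit $\eta\circ\eta$ of $a\circ b$, yielding $1_Y$. The remaining regions of the diagram commute by functoriality of $\circ$ and $\bullet$, naturality of $\xi,\xi^0,\Xi,\phi,\vartheta$, and the comonoid axioms for $(c,\delta,\epsilon)$. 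This gives one of the inverse laws; the remaining law $\Sigma'\Sigma=1_X$ then follows by the symmetry of Remark~\ref{rem:symmetry_of_T}, applied to the naturally Frobenius map-monoidale $M^*=(M,m^*,u^*)$ in $\cm\oprev$, which reverses both tensor products and passes to the opposite transform category.

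The step I expect to be the main obstacle is the two-stage collapse inside the central region: arranging the structure maps $\xi,\xi^0,\mu,\eta$ of $a\circ b$ together with the coassociativity and counit of $\delta$ so that the inverse law for $b$ can be applied first and that for $a$ afterwards, and so that once the $b$-layer is removed the hypotheses of Lemma~\ref{lem:theta}(ii),(iv) are literally met. The degeneration $\vartheta_{a,i,a}=\phi_{a,a}$ has to be routed through the unit-coherence isomorphisms $i\circ j\cong j$ and $j\bullet i\cong i$ and the maps $\xi^0,\xi^0_0$, and it is here that the diagram is most delicate.
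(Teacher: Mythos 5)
Your proposal is correct and matches the paper's own argument essentially step for step: the paper's proof is precisely the diagram chase you describe (the lower diagram of Figure~\ref{fig}), with Lemma~\ref{lem:theta}(iii) instantiated at $f=k=a$, $g=h=b$ as the pivotal region, the $\tau,\tau'$ inverse law applied first and the $\sigma,\sigma'$ law second after routing through parts (i), (ii) and (iv) of Lemma~\ref{lem:theta}, and the second inverse law obtained from the symmetry of Remark~\ref{rem:symmetry_of_T}. No substantive differences to report.
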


\proof
Consider the lower diagram in Figure~\ref{fig}. The upper path gives one
composite in $\ct^c_{a\circ b}$ of the two displayed morphisms. All of the
small quadrilaterals commute by functoriality of $\bullet$ or $\circ$, or by
naturality of $\vartheta$. The large central region commutes by the fact that
$\tau$ is inverse to $\tau'$ in $\ct^c_b$. The large upper region, the
lower region with the curved arrow, and the quadrilateral region just above
it commute by Lemma~\ref{lem:theta}. The irregular region on the left
commutes by coassociativity of $\delta$, and the triangular region next
to it by counitality of $\delta$. Now use the fact that $\sigma$ is inverse to
$\sigma'$ in $\ct^c_a$ to show that the lower path is equal to an identity in
$\ct^c_{a\circ b}$.

This gives one of the inverse laws; the other follows by the symmetry
described in Remark~\ref{rem:symmetry_of_T}. 
\endproof

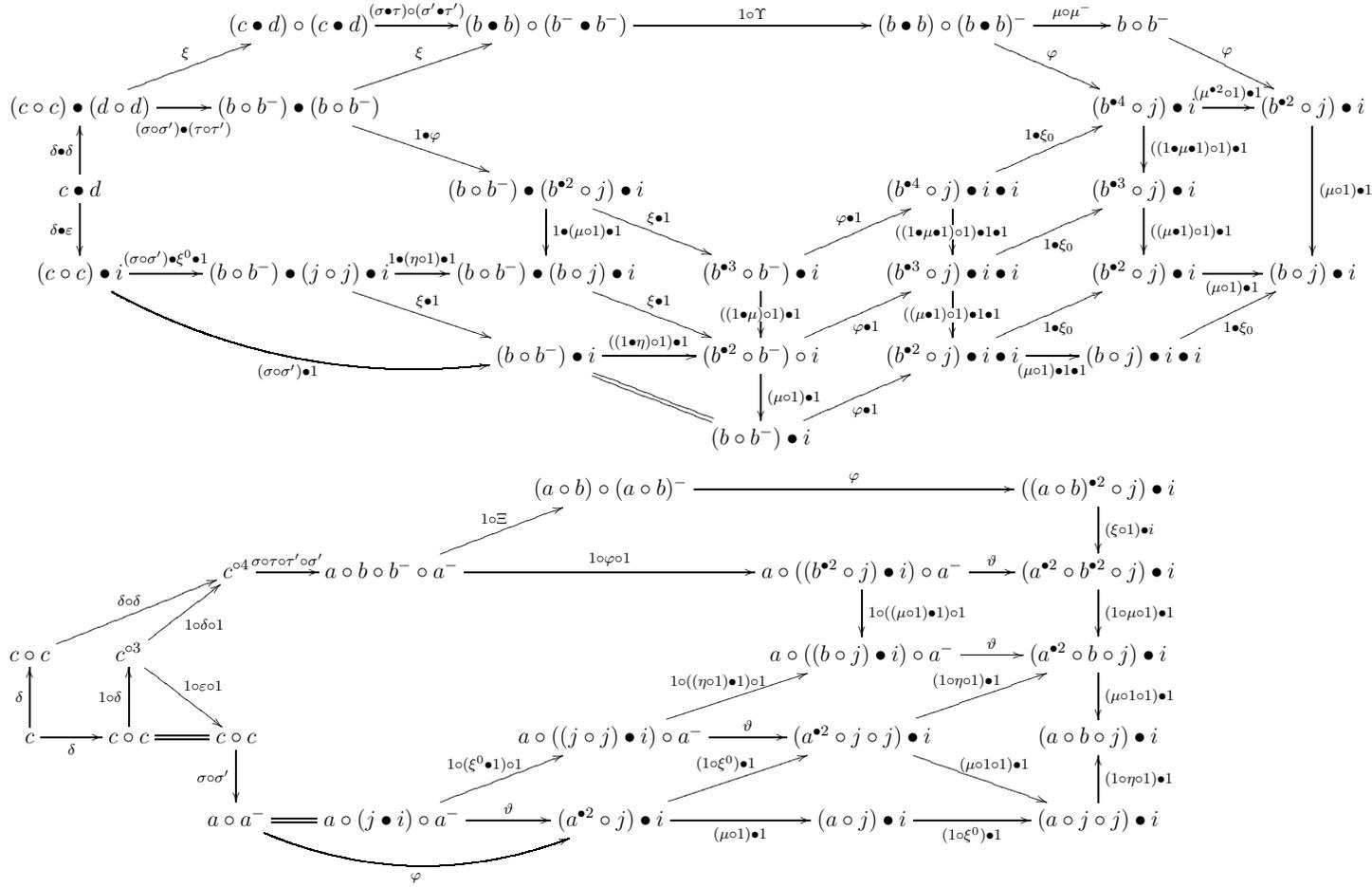
\begin{figure}
  \centering
  \begin{sideways}
\scalebox{0.75}{
\xymatrix{
& (c\bullet d)\circ (c\bullet d) 
\ar[r]^-{(\sigma\bullet\tau)\circ(\sigma'\bullet\tau')} 
& (b\bullet b)\circ(b^-\bullet b^-) \ar[rr]^-{1\circ\Upsilon} 
&& (b\bullet b)\circ(b\bullet b)^- \ar[r]^-{\mu\circ\mu^-} 
\ar[dr]^-{\phi} 
& b\circ b^- \ar[dr]^-{\phi} \\
(c\circ c)\bullet(d\circ d) \ar[ur]^-{\xi} 
\ar[r]_-{\raisebox{-8pt}{${}_{(\sigma\circ\sigma')\bullet(\tau\circ\tau')}$}} 
& (b\circ b^-)\bullet(b\circ b^-) \ar[ur]^-{\xi} 
\ar[dr]^-{1\bullet\phi} &&&&
(b^{\bullet 4}\circ j)\bullet i 
\ar[r]^-{(\mu^{\bullet 2}\circ 1)\bullet1} 
\ar[d]^-{((1\bullet\mu\bullet1)\circ1)\bullet1} 
& (b^{\bullet 2}\circ j)\bullet i \ar[dd]^-{(\mu\circ1)\bullet1} \\
c\bullet d \ar[u]^-{\delta\bullet\delta} \ar[d]_-{\delta\bullet\epsilon}
&&(b\circ b^-)\bullet(b^{\bullet 2}\circ j)\bullet i  
\ar[d]^-{1\bullet(\mu\circ 1)\bullet1} \ar[dr]^-{\xi\bullet1} 
&& (b^{\bullet 4}\circ j)\bullet i\bullet i  
\ar[d]|{((1\bullet\mu\bullet1) \circ 1)\bullet1\bullet1} 
\ar[ur]^-{1\bullet\xi_0} & (b^{\bullet 3}\circ j)\bullet i  
\ar[d]^-{((\mu\bullet1)\circ1)\bullet1}  \\
(c\circ c)\bullet i 
\ar[r]^-{(\sigma\circ\sigma')\bullet\xi^0\bullet 1} 
\ar@/_2pc/[drr]_-{(\sigma\circ\sigma')\bullet1} & 
(b\circ b^-)\bullet(j\circ j)\bullet i 
\ar[dr]^-{\xi\bullet1} \ar[r]^-{1\bullet(\eta\circ1)\bullet1} & 
(b\circ b^-)\bullet(b\circ j)\bullet i  \ar[dr]^-{\xi\bullet1} & 
(b^{\bullet 3}\circ b^-)\bullet i 
\ar[d]|-{((1\bullet\mu)\circ1)\bullet1} \ar[ur]^-{\phi\bullet1} & 
(b^{\bullet 3}\circ j)\bullet i\bullet i 
\ar[d]|{((\mu\bullet1)\circ1)\bullet1\bullet1} 
\ar[ur]_-{1\bullet\xi_0} & 
(b^{\bullet 2}\circ j)\bullet i 
\ar[r]_-{(\mu\circ1)\bullet 1} &
(b\circ j)\bullet i \\
&& (b\circ b^-)\bullet i 
\ar[r]^-{((1\bullet\eta)\circ1)\bullet1} \ar@{=}[dr] 
& (b^{\bullet 2}\circ b^-)\circ i 
\ar[ur]_-{\phi\bullet1} \ar[d]^-{(\mu\circ1)\bullet 1} 
& (b^{\bullet 2}\circ j)\bullet i\bullet i 
\ar[ur]_-{1\bullet\xi_0} \ar[r]_-{(\mu\circ1)\bullet1\bullet1} & 
(b\circ j)\bullet i\bullet i \ar[ur]_-{1\bullet\xi_0} \\
&&& (b\circ b^-)\bullet i \ar[ur]_-{\phi\bullet1} 
}   }
\end{sideways}
\begin{sideways}
\scalebox{0.75}{
\xymatrix{
&&&& (a\circ b)\circ(a\circ b)^- \ar[rr]^-{\phi} && 
((a\circ b)^{\bullet 2}\circ j)\bullet i \ar[d]^-{(\xi\circ1)\bullet i} \\
&& c^{\circ 4} \ar[r]^-{\sigma\circ\tau\circ\tau'\circ\sigma'} 
& a\circ b\circ b^-\circ a^- \ar[ur]^-{1\circ\Xi} 
\ar[rr]^-{1\circ\phi\circ1} 
&& a\circ((b^{\bullet 2}\circ j)\bullet i)\circ a^-  \ar[r]^-{\vartheta} 
\ar[d]^-{1\circ((\mu\circ1)\bullet1)\circ1} & 
(a^{\bullet 2}\circ b^{\bullet 2}\circ j)\bullet i 
\ar[d]^-{(1\circ\mu\circ 1)\bullet 1} \\
c\circ c \ar[urr]^-{\delta\circ\delta} 
& c^{\circ 3} \ar[ur]_-{1\circ\delta\circ1} \ar[dr]^-{1\circ\epsilon\circ 1} 
&&&& a\circ((b\circ j)\bullet i)\circ a^- \ar[r]^-{\vartheta} 
& (a^{\bullet 2}\circ b\circ j)\bullet i 
\ar[d]^-{(\mu\circ1\circ1)\bullet 1} \\
c \ar[r]_-{\delta} \ar[u]^-{\delta} & c\circ c \ar[u]^-{1\circ\delta} 
\ar@{=}[r] 
& c\circ c \ar[d]_-{\sigma\circ\sigma'} 
&& a\circ((j\circ j)\bullet i)\circ a^- 
\ar[ur]^-{1\circ((\eta\circ1)\bullet 1)\circ1} \ar[r]^-{\vartheta} & 
(a^{\bullet 2}\circ j\circ j)\bullet i
\ar[ur]^-{(1\circ\eta\circ1) \bullet 1} 
\ar[dr]^-{(\mu\circ1\circ1)\bullet1} & 
(a\circ b\circ j)\bullet i \\
&& a\circ a^- \ar@{=}[r] \ar@/_2pc/[rr]_-{\phi} & 
a\circ (j\bullet i)\circ a^- 
\ar[ur]^-{1\circ(\xi^0\bullet1)\circ1} \ar[r]^-{\vartheta} & 
(a^{\bullet 2}\circ j)\bullet i \ar[ur]^-{(1\circ\xi^0)\bullet1} 
\ar[r]_-{(\mu\circ1)\bullet1} & (a\circ j)\bullet i 
\ar[r]_-{(1\circ\xi^0)\bullet 1} & 
(a\circ j\circ j)\bullet i \ar[u]_-{(1\circ\eta\circ1)\bullet1} 
}   }
\end{sideways}
\caption{Diagrams for Propositions~\ref{prop:T-bullet}
   and~\ref{prop:T-circ}}\label{fig} 
\end{figure}

Let us take now a monoidal comonad $a$ on  a naturally Frobenius
map-monoidale $M$; it provides us with a
convolution-monoid $(a,\mu,\eta)$ and a composition-comonoid
$(a,\delta,\varepsilon)$ in $\cm(M,M)$. 

\begin{proposition} \label{prop:beta-hat_mixed_morphism}
The Hopf map $\hat{\beta}$ in \eqref{eq:beta-hat} is a morphism $GTx\to
GTy$ in the category $\cb^{(T,G)}$ of Construction \ref{sec:BTG}. 
\end{proposition}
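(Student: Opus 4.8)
The plan is to first observe that, under the isomorphisms $GTx\cong m.aa$ and $GTy\cong a.m.a1$ recorded in Construction~\ref{sec:BTG}, the Hopf map of \eqref{eq:beta-hat} is already a morphism $GTx\to GTy$ in the underlying category $\cb$: it is the pasting of $m.\delta a\colon m.aa\to m.aa.a1$ with the whiskered constraint $a_2.a1\colon m.aa.a1\to a.m.a1$, so that $\hat\beta$ reads as ``comultiply the first tensor factor, then recombine through the monoidal constraint $a_2$''. Since a morphism of $\cb^{(T,G)}$ is precisely a morphism of $\cb$ that is simultaneously a morphism of $G$-coalgebras and of $T$-algebras for the canonical structures on $GTx$ and $GTy$, the task splits into these two verifications. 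The relevant structures are: the $G$-coaction, which comultiplies the $c$-slot (the first tensor factor) by means of $\delta$; and the $T$-action, which multiplies the $b$-slot (the second tensor factor) by means of $\mu$ after the reshuffling by $m^*$ and $m$ built into $T$, the lifting of $T$ to $G$-coalgebras being governed by the mixed distributive law $TG\cong GT$ of Construction~\ref{sec:BTG}.

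For the $G$-coalgebra condition I would note that both coactions comultiply the first tensor factor via $\delta$, and that the leading factor $m.\delta a$ of $\hat\beta$ does exactly this. The coalgebra-morphism square then collapses, after whiskering, to the coassociativity of $\delta$ together with the pseudonaturality of $a_2$ with respect to the duplicated factor. This is a routine pasting calculation, and I expect it to present no real difficulty.

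The $T$-algebra condition is the crux. Here I would paste out the two legs of the square comparing $\hat\beta$ with the $T$-actions and reduce them using associativity and unitality of $m$, a triangle identity for $m\dashv m^*$, associativity of $\mu$, the explicit form of the distributive law (functoriality of the tensor in $\cm$), and a middle-four interchange separating the first-factor comultiplication $\delta$ from the second-factor multiplication $\mu$. The essential input, however, is the compatibility that makes $a$ a genuine monoidal comonad rather than merely a comonad and a convolution monoid: that $\delta$ is a monoidal $2$-cell, equivalently the bialgebra compatibility between $\delta$ and $\mu$ (and $a_2$) encoded by the duoidal structure map $\xi$. It is exactly this axiom that lets ``comultiply the first factor and recombine through $a_2$'' commute past ``multiply the second factor by $\mu$''. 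Marshalling this bialgebra axiom inside the large pasting diagram so that the two legs coincide is the main obstacle; once it is in place, the remaining steps are the bookkeeping coherences listed above.
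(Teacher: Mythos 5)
Your overall strategy is the same as the paper's: identify the $T$-algebra and $G$-coalgebra structures on $GTx\cong m.aa$ and $GTy\cong a.m.a1$, and verify the two compatibility squares separately. Your treatment of the $G$-coalgebra half is correct and matches the paper: both coactions comultiply the first tensor factor, and the square reduces to coassociativity of $\delta$ together with interchange/naturality of $a_2$.

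However, your diagnosis of the $T$-algebra half is wrong. You claim the essential input is the bimonoid compatibility between $\delta$ and $\mu$ (i.e.\ that $\delta$ is a monoidal 2-cell, via $\xi$), and that the main obstacle is marshalling this axiom inside the pasting diagram. In fact that axiom is never needed, and if you go looking for a place to use it you will not find one. Unwind the two legs of the square: the $T$-action on $GTx$ multiplies the incoming copy of $a$ into the \emph{second} tensor factor via $a_2$, the $T$-action on $GTy$ multiplies it into the outer slot via $a_2$, and in $\hat\beta$ the comultiplication $\delta$ is applied only to the \emph{first} factor. So $\delta$ simply slides past the multiplication by functoriality of the tensor and naturality of the associator --- at no point is $\delta$ applied to a product, which is the only situation in which the bimonoid axiom could enter. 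After this sliding, the two legs differ exactly by the two bracketings of ``(copy of first factor)$\,\cdot\,$(second factor)$\,\cdot\,$(incoming element)'', and their equality is precisely the associativity axiom for the monoidal morphism structure $a_2$ (equivalently, associativity of $\mu$). This is what the paper's proof asserts: compatibility with the algebra structures follows from associativity of $a_2$ alone, just as compatibility with the coalgebra structures follows from coassociativity of $\delta$ alone. The proposition is thus true for any $a$ carrying independently a composition-comonoid and a convolution-monoid structure related by $a_2$ being associative; the bimonoid compatibility plays no role here, and your proof as planned would stall hunting for a use of $\xi$ that does not exist.
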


\proof
The algebra and coalgebra structures on $GTx$ are given by the 2-cells
$$\xymatrix{
M^3 \ar[r]^-{aaa}_{~}="1" \ar[d]_-{Mm} & M^3 \ar[r]^-{mM}_{~}="3" \ar[d]^-{Mm} &
M^2 \ar[d]^-{m} && 
M^2 \ar[r]^-{aa}_{~}="5" \ar@{=}[d] & M^2 \ar[r]^-{m} & M \\
M^2 \ar[r]_-{aa}^{~}="2" & M^2 \ar[r]_-{m}^{~}="4" & M && 
M^2 \ar[r]_-{aM}^{~}="6" & M^2 \ar[u]_-{aa} 
\ar@{=>}"1";"2"^{aa_2} 
\ar@{=>}"3";"4"^{\alpha} 
\ar@{=>}"5";"6"^{\delta a} 
}$$
while the algebra and coalgebra structures on $GTy$ are given by the 2-cells
$$\xymatrix{
M^3 \ar[r]^-{aMM} \ar[d]_-{Mm} & 
M^3 \ar[r]^-{mM}_{~}="1" \ar[d]_-{Mm} & M^2 \ar[r]^-{aa}_{~}="3" \ar[d]^-{m} &
M^2 \ar[d]^-{m} &
M^2 \ar[r]^-{aM}_{~}="5" \ar@{=}[d] & M^2 \ar[r]^-{m} & M \ar[r]^-{a} & M \\
M^2 \ar[r]_-{aM} & 
M^2 \ar[r]_-{m}^{~}="2" & M \ar[r]_-{a}^{~}="4" & M &
M^2 \ar[r]_-{aM}^{~}="6" & M^2 \ar[u]_-{aM} 
\ar@{=>}"3";"4"^{a_2} 
\ar@{=>}"1";"2"^{\alpha} 
\ar@{=>}"5";"6"^{\delta M} 
}$$
Compatibility of $\hat{\beta}$ with the algebra structures follows by
associativity of $a_2$; compatibility with the coalgebra structures follows by
coassociativity of $\delta$.  
\endproof

\section{Results}

We continue to suppose that $M=(M,m,u)$ is a map-monoidale in the monoidal
bicategory \cm and that $a$ is a monoidal comonad on $M$. For many results we
shall also need to suppose that $M$ is naturally Frobenius.

\subsection{Antipodes}\label{sect:antipodes}

First we establish the relevant notion of antipode. Recall from
Proposition~\ref{prop:beta-hat_mixed_morphism} that -- using the
notation from Construction \ref{sec:BTG} --  the Hopf morphism
$\hat{\beta}$ is a morphism in $\cb^{(T,G)}$ from $GTx$ to $GTy$. 

\begin{proposition}\label{prop:beta-transform-identity}
For a monoidal comonad $a$ on a naturally Frobenius map-monoidale $M$,
the transform of the Hopf morphism $\hat{\beta}\colon GTx\to GTy$ in
$\cb^{(T,G)}$ of Proposition \ref{prop:beta-hat_mixed_morphism} is the
identity 2-cell $a\to a$.
\end{proposition}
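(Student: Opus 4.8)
The plan is to trace the Hopf morphism $\hat{\beta}$ through the explicit chain of isomorphisms used in the proof of Proposition~\ref{prop:transform} to identify the transform category $\ct^a_a$. Recall that, for the full subcategory of $\cb^{(T,G)}$ on $GTx$ and $GTy$, one computes $\cb^{(T,G)}(GTx,GTy)\cong\cb^T(GTx,Ty)\cong\cb(Gx,Ty)$, after which the adjunction $Mu\dashv Mu^*$ and the unitality isomorphism $m.Mu\cong1$ identify a morphism $X\to Y$ with a $2$-cell $a\to a$ in $\cm(M,M)$. So I would compute the image of $\hat{\beta}$ at each stage and verify that it simplifies to the identity $2$-cell.

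First I would compose $\hat{\beta}\colon GTx\to GTy$ with the counit $\varepsilon^G_{Ty}\colon GTy=a.m.a1\to Ty=a.m$ of the cofree $G$-coalgebra, which applies the counit $\epsilon\colon a\to i$ of the comonad $a$ to the trailing factor. Writing $\hat{\beta}$ as in~\eqref{eq:beta-hat}, namely as the vertical composite of $m.\delta a$ with $a_2.a1$, the whiskered $\epsilon$ meets the factor of $a$ created by $\delta a$; commuting it past $a_2$ by middle-four interchange and then invoking a counit axiom of the comonad $a$ to cancel $\epsilon$ against $\delta$, this composite collapses to $a_2\colon m.aa\to a.m$, now regarded as a morphism $GTx\to Ty$.

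Next I would transpose along the second isomorphism $\cb^T(GTx,Ty)\cong\cb^T(TGx,Ty)\cong\cb(Gx,Ty)$, which uses $GTx\cong TGx$ and precomposes with the unit $\eta^T_{Gx}\colon Gx\to TGx\cong GTx$ of the free $T$-algebra. By the explicit form of the unit of $T$ in Construction~\ref{sec:BTG}, this inserts the unit $\eta\colon j\to a$ of the convolution monoid $a$ into the tensor factor feeding $a_2$; the unit axiom for the convolution monoid $a$ then reduces $a_2$ composed with $\eta$ to a unitor. Finally I would carry out the remaining transposition across $Mu\dashv Mu^*$ and apply the unitality isomorphism $m.Mu\cong1$, so that the triangle identities for this adjunction together with the unit coherence of $M$ leave precisely the identity $2$-cell $a\to a$.

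The conceptual content is that $\hat{\beta}$ is ``comultiply, then multiply one factor against the identity'', and the transform strips this down by a counit and a unit: the comultiplication $\delta$ cancels against the comonad counit $\epsilon$, and the multiplication $a_2$ cancels against the convolution unit $\eta$, exactly as for a classical bialgebra where $h\otimes k\mapsto h_{(1)}\otimes h_{(2)}k$ becomes $\epsilon(h_{(1)})h_{(2)}=h$ once $k$ is set to the unit. I expect the main obstacle to be bookkeeping rather than conceptual: one must track the Gray-monoid interchange $2$-cells and the several adjunction transpositions (free, cofree, and $Mu\dashv Mu^*$) carefully enough to confirm that it is the correct copy of $a$ that is counited and the correct slot into which $\eta$ is inserted, so that the two cancellations really are the genuine counit and unit axioms and all the intervening coherence isomorphisms compose to the identity.
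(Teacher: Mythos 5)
Your proposal follows essentially the same route as the paper's proof: compose $\hat{\beta}$ with the counit of $G$ and cancel $\epsilon$ against $\delta$ by counitality to obtain $a_2$, then precompose with the unit of $T$ so that the convolution unit $\eta$ cancels against $a_2$ by the unit axiom, and finally transpose across $Mu\dashv Mu^*$ using a triangle identity and unitality of the monoidale. The key identities you invoke at each stage are exactly those used in the paper, so the argument is correct.
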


\proof
The morphism in $\ct'$  corresponding to $\hat{\beta}$ will be the morphism
$Gx\to Ty$ given by composing with the unit of $T$ and the counit of
$G$. Composing $\hat{\beta}$ with the counit $GTy\to Ty$ gives $a_2\colon
m.aa\to a.m$ by counitality of $\delta$. Composing this with the unit $Gx\to
GTx$ gives the diagram below on the left 
$$
\xymatrix{
M^2\ar@{=}[rr]_{~}="1"\ar@{<=}[rd];"1"_{\eta_m}\ar[rd]_-m&&
M^2 \ar[rd]^-{Mu^*}\\
& M \ar[ru]_-{m^*}\ar@{=}[rr]&&
M\ar[r]^-a &
M}\quad
\xymatrix{
& M \ar@{=}[rr] \ar[dr]^-{Mu} & & M \ar[r]^-{a} & M \\
M^2 \ar[ur]^-{Mu^*} \ar@{=}[rr]^{~}="1" \ar@{=>}[ur];"1"^{M\epsilon_u} && M^2
\ar[ur]_-{m} }
$$
which, by the unitality of the monoidale $M$, is equal to the diagram on the right. 
Finally, to obtain from this the transform of $\hat \beta$ in $\ct$,
paste with the unit $M\eta_u\colon 1\to Mu^*.Mu$ and use one of the
triangle equations, together with the unitality of the monoidale $M$
once again, to obtain the identity $a\to a$ as the transform of
$\hat{\beta}$. \endproof  

The isomorphism of Proposition~\ref{prop:transform} now allows a description,
in terms of the transformed morphisms, of when the Hopf map
$\hat{\beta}$ in \eqref{eq:beta-hat}  is invertible. 

\begin{theorem}\label{thm:antipode}
For a monoidal comonad $a$ on a naturally Frobenius map-monoidale $M$,
the Hopf morphism $\hat{\beta}$ in \eqref{eq:beta-hat} is
invertible if and only if there exists a 2-cell $\ant\colon a\to a^-$ making
the following diagrams commute. 
$$\xymatrix{
a \ar[r]^-{\delta} \ar[d]_-{\epsilon} & a\circ a \ar[r]^-{a\circ \ant} & 
a\circ a^-\ar[r]^-{\phi_{a,a}} & 
((a\bullet a)\circ j)\bullet i \ar[d]^-{(\mu\circ j)\bullet i} \\
i \ar@{=}[r] & j \bullet i \ar[r]^-{\xi^0\bullet i } & 
(j\circ j)\bullet i \ar[r]^-{(\eta\circ j)\bullet i} & (a\circ j)\bullet i \\
a \ar[r]^-{\delta} \ar[d]_-{\epsilon} & a\circ a \ar[r]^-{\ant\circ a} & 
a^-\circ a \ar[r]^-{\psi_{a,a}}  & 
i\bullet(j\circ(a\bullet a)) \ar[d]^-{i\bullet(j\circ\mu)} \\
i \ar@{=}[r] & i\bullet j \ar[r]^-{i\bullet\xi^0} & 
i\bullet(j\circ j) \ar[r]^-{i\bullet(j\circ\eta)} & i\bullet(j\circ a)
}$$
\end{theorem}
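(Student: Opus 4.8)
The plan is to convert the statement about invertibility of $\hat\beta$ into a statement about the category $\ct=\ct^a_a$ of Proposition~\ref{prop:transform}, and then read off what the defining diagrams of an antipode mean. By Proposition~\ref{prop:beta-hat_mixed_morphism}, $\hat\beta$ is a morphism $GTx\to GTy$ in $\cb^{(T,G)}$, and by the isomorphism of Proposition~\ref{prop:transform} this corresponds to a morphism $X\to Y$ in $\ct$, that is, to a $2$-cell $a\to a$ in $\cm(M,M)$. Proposition~\ref{prop:beta-transform-identity} identifies this transform as the identity $2$-cell $a\to a$. Since the transform is an \emph{isomorphism of categories}, $\hat\beta$ is invertible in $\cb^{(T,G)}$ if and only if its transform is invertible in $\ct$; that is, if and only if the identity morphism $X\to Y$ in $\ct$ admits a two-sided inverse $Y\to X$.

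**Reading off the antipode.** First I would note that $\hat\beta$ invertible in $\cb^{(T,G)}$ is exactly what is wanted: the category of mixed $(T,G)$-algebras embeds into \cb, whose morphisms are honest $2$-cells in \cm, so invertibility there is invertibility of the underlying $2$-cell. By Proposition~\ref{prop:transform}, a morphism $Y\to X$ in $\ct$ is precisely a $2$-cell $\ant\colon a\to a^-$ in $\cm(M,M)$. So an inverse to the transform of $\hat\beta$ is exactly a $2$-cell $\ant\colon a\to a^-$ whose two composites with the identity $X\to Y$ equal the identity endomorphisms of $X$ and of $Y$ in $\ct$. The task therefore reduces to writing out these two composite-equals-identity equations using the explicit composition and identity formulas supplied in Proposition~\ref{prop:transform}.

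**Spelling out the two equations.** Take $\sigma\colon X\to Y$ to be the transform of $\hat\beta$, namely the identity $2$-cell $a\to a$ (Proposition~\ref{prop:beta-transform-identity}), and take $\tau=\ant\colon Y\to X$. The formula in Proposition~\ref{prop:transform} for the composite $X\to Y\to X$ is
$$
a\xrightarrow{\delta} a\circ a\xrightarrow{\sigma\circ\tau} a\circ a^-
\xrightarrow{\phi_{a,a}}((a\bullet a)\circ j)\bullet i
\xrightarrow{(\mu\circ j)\bullet i}(a\circ j)\bullet i,
$$
and since $\sigma$ is the identity this is exactly the top path of the first displayed diagram in the theorem; requiring it to equal the identity endomorphism of $X$ — whose formula is the bottom path $a\xrightarrow{\epsilon}i=j\bullet i\xrightarrow{\xi^0\bullet i}(j\circ j)\bullet i\xrightarrow{(\eta\circ j)\bullet i}(a\circ j)\bullet i$ — is precisely commutativity of that first diagram. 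Symmetrically, the composite $Y\to X\to Y$ together with the stated identity endomorphism of $Y$ gives the second diagram. Thus $\ant$ is a two-sided inverse to the transform of $\hat\beta$ in $\ct$ if and only if both displayed diagrams commute, and by the isomorphism of categories this is equivalent to invertibility of $\hat\beta$.

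**The main obstacle.** The conceptual content is entirely carried by the two cited propositions, so the work is bookkeeping: I must check that the two equations characterizing an inverse of the identity $X\to Y$ in $\ct$ match, arrow for arrow, the two diagrams in the statement, using the exact composition and identity formulas of Proposition~\ref{prop:transform} and the fact that one of the two factors is literally the identity $2$-cell. The only genuine subtlety I anticipate is keeping the roles straight: the inverse is sought \emph{on both sides}, and since $\sigma$ is the identity, the left- and right-inverse conditions collapse to the two displayed diagrams rather than to four separate equations. I expect no hard calculation beyond substituting $\sigma=1$ into the transform composition formulas and invoking that the transform of Proposition~\ref{prop:transform} is an isomorphism of categories, so that invertibility is preserved and reflected.
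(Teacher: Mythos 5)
Your proposal is correct and follows essentially the same route as the paper: identify the transform of $\hat\beta$ as the identity $a\to a$ via Proposition~\ref{prop:beta-transform-identity}, use the category isomorphism of Proposition~\ref{prop:transform} to reduce invertibility of $\hat\beta$ to the existence of a two-sided inverse $Y\to X$ of that identity in $\ct^a_a$, and read off the two inverse equations as the two displayed diagrams. Your explicit remark that invertibility in $\cb^{(T,G)}$ coincides with invertibility of the underlying 2-cell (since the forgetful functor to $\cb$ creates isomorphisms) is a point the paper leaves implicit, and is a worthwhile addition.
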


\proof
We have seen in Proposition~\ref{prop:beta-transform-identity} that the
transform of the morphism $\hat\beta\colon GTx\to GTy$ in Proposition
\ref{prop:beta-hat_mixed_morphism} is the identity $1\colon a\to a$.  
The transform of a morphism $\hat\beta'\colon GTy\to GTx$ will be a
2-cell $\ant\colon a\to a^-$. The two conditions in the theorem are transforms
to $\ct$ of the two equations for $\hat\beta'$ to be inverse to
$\hat\beta$ in the category $\cb^{(T,G)}$.
\endproof

We call a 2-cell $\ant\colon a\to a^-$ satisfying the conditions in
Theorem~\ref{thm:antipode}  an {\em antipode} for the monoidal comonad 
$a$.

\begin{example}\label{ex:trivial-antipode}
Consider the $\circ$-trivial bialgebra $i$ of Example~\ref{ex:trivial} 
in a duoidal category $\cm (M,M)$ induced by a naturally Frobenius
map-monoidale $(M,m,u)$. The Hopf map $\hat\beta$ is in fact the
identity 2-cell $1\colon m\to m$, seen as a morphism $GTx\to GTy$; this is of
course invertible. By Theorem~\ref{thm:antipode}, therefore, there is an
antipode $i\to i^-$. This can be calculated by transforming $1\colon m\to m$,
now seen as a morphism $GTy\to GTx$. An explicit calculation shows that this
gives $\Xi_0$.

Now consider the $\bullet$-trivial bialgebra $j$ of
Example~\ref{ex:trivial}. The Hopf map $\hat\beta$ has the form  
$$\xymatrix{
MM \ar[r]^-{u^*u^*} \ar[dd]_-{u^*1} & 
I \ar[r]^-{uu} 
\ar@{=}
[rrrrrdd]&
MM \ar[rrrr]^-{m}  &&&& 
M \\
&& MM \ar[lu]^-{u^*u^*}
\\
M \ar[r]_{u1}^(.6){~}="2" \ar[ruu]^-{u^*}_{~}="1"& 
MM \ar[r]_{m}^(.6){~}="4"  \ar@{=}[ru]_-{~}="3"
&
M \ar[rrrr]_-{u^*} \ar[u]_-{m^*}&&&&
I \ar[uu]_-{u} 
\ar@{=>}"1";"2"^{\eta_u u^*}
\ar@{=>}"3";"4"^{\eta_m}
}$$
and by the unitality of the monoidale $M$ this is equal to the canonical
isomorphism  
$$\xymatrix{
MM \ar[r]^-{u^*u^*} \ar[d]_-{u^*1} & 
I \ar[r]^-{uu}_{~}="1" \ar@{=}[rrrrrd]&
MM \ar[rrrr]^-{m}  &&&&
M \\
M \ar[r]_-{u1}\ar@{=}@/^1.5pc/[rr] & 
MM \ar[r]_-{m}^{~}="2" &
M \ar[rrrr]_-{u^*} &&&& 
I \ar[u]_-{u} 
}$$ 
involving two copies of the unit isomorphism of  the monoidale $M$. Thus
by Theorem~\ref{thm:antipode} once again, there is an antipode $j\to j^-$, 
given by transforming the inverse. An explicit calculation shows that this is
$\Upsilon_0$.
\end{example}

We observed in Section \ref{sec:more-on-NatFrob}  that the meaning of $a^-$ is
unchanged whether we regard $a$ as a monoidal comonad on $(M,m,u)$ or a
monoidal comonad on $(M,m,u)^*$, but that the roles of $\phi_{a,a}$ and
$\psi_{a,a}$ are interchanged. Given this, it is straightforward to see that
moving from $(M,m,u)$ to $(M,m,u)^*$ interchanges the roles of the two
equations for an antipode.

We deduce:

\begin{theorem}\label{thm:Hopf}
Let $(M,m,u)$ be a naturally Frobenius map-monoidale in a monoidal bicategory
\cm, and let $a$ be a monoidal comonad on $(M,m,u)$. The following conditions
are equivalent: 
\begin{enumerate}[(a)]
\item the Hopf map $\hat{\beta}$ of \eqref{eq:beta-hat} is invertible; 
\item the co-Hopf map $\hat{\zeta}$ of \eqref{eq:zeta-hat} is
invertible;  
\item there exists an antipode $\ant\colon a\to a^-$. 
\end{enumerate}
\end{theorem}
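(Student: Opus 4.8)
The plan is to reduce everything to Theorem~\ref{thm:antipode}, which already establishes the equivalence of (a) and (c): the Hopf map $\hat{\beta}$ is invertible if and only if an antipode $\ant\colon a\to a^-$ exists. So the only remaining work is to fold (b) into this equivalence, and this I would do by exploiting the self-duality of the situation under passage to $(M,m,u)^*$.

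First I would recall, from Section~\ref{sec:HopfMap}, that the co-Hopf map $\hat{\zeta}$ of \eqref{eq:zeta-hat} associated to $a$ on $(M,m,u)$ is by definition nothing but the Hopf map $\hat{\beta}$ associated to $a$ when the latter is regarded as a monoidal comonad on the map-monoidale $(M,m,u)^*=(M,m^*,u^*)$ in $\cm\oprev$. Since $(M,m,u)^*$ is again naturally Frobenius (as noted in the section on naturally Frobenius map-monoidales), Theorem~\ref{thm:antipode} applies verbatim with $(M,m,u)^*$ in place of $(M,m,u)$: it tells us that $\hat{\zeta}$ is invertible if and only if there exists a 2-cell $\ant\colon a\to a^-$ satisfying the two antipode diagrams built from the $(M,m,u)^*$-versions of $\phi$ and $\psi$.

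Second, I would invoke the observation of Section~\ref{sec:more-on-NatFrob}, recalled in the paragraph immediately preceding the theorem: passing from $(M,m,u)$ to $(M,m,u)^*$ leaves the morphism $a^-$ unchanged while interchanging $\phi_{a,a}$ and $\psi_{a,a}$, and hence merely swaps the two antipode equations. Because the antipode condition asks for a \emph{single} 2-cell $\ant\colon a\to a^-$ making \emph{both} diagrams commute, this swap has no effect whatsoever: the $(M,m,u)^*$-antipode condition is literally condition (c). Combining, (b) holds iff (c) holds, and together with the equivalence (a)$\iff$(c) supplied by Theorem~\ref{thm:antipode} this yields the desired three-way equivalence.

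The only genuinely delicate points are the identification of $\hat{\zeta}$ with the Hopf map of the dual monoidale and the assertion that the two antipode equations coincide up to the swap; but both have already been prepared in Sections~\ref{sec:HopfMap} and~\ref{sec:more-on-NatFrob}, so once these are cited the argument is purely formal. I would expect the temptation toward a heavier alternative route to be the main thing to resist: one could instead run through the commutative diagram \eqref{eq:beta-zeta_duality}, identifying $\hat{\zeta}$ for $a$ with $\hat{\beta}$ for $a^-$ and then applying Theorem~\ref{thm:antipode} to $a^-$, but this forces a comparison of an antipode $a^-\to a^{--}$ with $\ant\colon a\to a^-$, which is considerably more awkward than the self-dual bookkeeping above and is best avoided.
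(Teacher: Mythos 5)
Your proposal is correct and follows essentially the same route as the paper: the paper likewise deduces the theorem from Theorem~\ref{thm:antipode} by regarding $a$ as a monoidal comonad on $(M,m,u)^*$, noting that $\hat{\zeta}$ is the Hopf map in that setting, that $a^-$ is unchanged, and that the interchange of $\phi_{a,a}$ and $\psi_{a,a}$ merely swaps the two antipode equations. Your closing remark about avoiding the detour through \eqref{eq:beta-zeta_duality} is also well judged.
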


The well-known fact, that the antipode of a Hopf algebra is an algebra and
coalgebra anti-homomorphism, takes the following form in our setting. 

\begin{theorem}\label{thm:antipode_(co)monoid_map}
Let $(M,m,u)$ be a naturally Frobenius map-monoidale in a monoidal bicategory
\cm, and let $a$ be a monoidal comonad on $(M,m,u)$ obeying the equivalent
conditions in Theorem \ref{thm:Hopf}. Then the antipode $\ant$ is a monoid
morphism $(a,\mu,\eta)\to (a^-,\mu^-,\eta^-)$ and a comonoid morphism
$(a,\delta,\varepsilon)\to (a^-,\delta^-,\varepsilon^-)$. 
\end{theorem}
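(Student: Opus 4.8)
The plan is to exploit the transform machinery of Section~\ref{sec:BTG} together with the uniqueness of inverses in a category. By Theorem~\ref{thm:antipode}, saying that $a$ admits an antipode $\ant\colon a\to a^-$ is precisely saying that the identity $2$-cell (the transform of $\hat\beta$, by Proposition~\ref{prop:beta-transform-identity}) as a morphism $X\to Y$, and $\ant$ as a morphism $Y\to X$, constitute a mutually inverse pair $X\cong Y$ in the transform category $\ct^a_a$. Each of the four conditions defining a monoid/comonoid morphism will be obtained by producing, in a single transform category, two inverses of one and the same isomorphism and invoking their uniqueness. Throughout I use that the bialgebra structure maps $\mu\colon a\bullet a\to a$ and $\varepsilon\colon a\to i$ are morphisms of $\circ$-comonoids, while $\delta\colon a\to a\circ a$ and $\eta\colon j\to a$ are morphisms of $\bullet$-monoids; moreover $\eta$ and $\varepsilon$ are full bialgebra morphisms (to and from the $\bullet$- and $\circ$-trivial bialgebras $j$ and $i$ of Example~\ref{ex:trivial}).

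For multiplicativity, I would apply Proposition~\ref{prop:T-bullet} with $c=d=b=a$, taking both inverse pairs to be the antipode pair $(1,\ant)$ in $\ct^a_a$. Its output is an inverse pair in $\ct^{a\bullet a}_a$ whose $X\to Y$ leg is $\mu$ and whose $Y\to X$ leg is $\mu^-\cdot\Upsilon_{a,a}\cdot(\ant\bullet\ant)$, that is, exactly $\mu_{a^-}\cdot(\ant\bullet\ant)$. On the other hand, since $\mu$ is a comonoid morphism, the first part of Proposition~\ref{prop:T-functoriality} yields a functor $\ct^a_a\to\ct^{a\bullet a}_a$ carrying the antipode pair to an inverse pair in $\ct^{a\bullet a}_a$ with the same $X\to Y$ leg $\mu$ and with $Y\to X$ leg $\ant\cdot\mu$. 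As two two-sided inverses of the isomorphism $\mu$ in $\ct^{a\bullet a}_a$ they must coincide, giving $\ant\cdot\mu=\mu_{a^-}\cdot(\ant\bullet\ant)$. The comultiplicativity $\delta_{a^-}\cdot\ant=(\ant\circ\ant)\cdot\delta$ follows by the dual argument: compare the inverse pair produced by Proposition~\ref{prop:T-circ} (with the monoid taken to be $a$ and both pairs the antipode pair), whose $Y\to X$ leg is $\Xi_{a,a}\cdot(\ant\circ\ant)\cdot\delta$, against the image of the antipode pair under the functor $\ct^a_a\to\ct^a_{a\circ a}$ induced by the monoid morphism $\delta$ via the second part of Proposition~\ref{prop:T-functoriality}, whose $Y\to X$ leg is $\delta^-\cdot\ant$; uniqueness of inverses in $\ct^a_{a\circ a}$ finishes it.

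The unit and counit conditions use Example~\ref{ex:trivial-antipode}, which identifies the antipodes of $j$ and $i$ as $\Upsilon_0$ and $\Xi_0$, so that $(1,\Upsilon_0)$ is the antipode pair in $\ct^j_j$ and $(1,\Xi_0)$ in $\ct^i_i$. For unitality I would compare, inside $\ct^j_a$, the image of the antipode pair of $a$ under the functor $\ct^a_a\to\ct^j_a$ induced by the comonoid morphism $\eta$ (giving $Y\to X$ leg $\ant\cdot\eta$) with the image of the antipode pair of $j$ under the functor $\ct^j_j\to\ct^j_a$ induced by the monoid morphism $\eta$ (giving $Y\to X$ leg $\eta^-\cdot\Upsilon_0=\eta_{a^-}$); both are inverse to $\eta$ as a morphism $X\to Y$, so $\ant\cdot\eta=\eta_{a^-}$. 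Symmetrically, comparing inside $\ct^a_i$ the image of the antipode pair of $i$ under $\ct^i_i\to\ct^a_i$ (via the comonoid morphism $\varepsilon$) with the image of the antipode pair of $a$ under $\ct^a_a\to\ct^a_i$ (via the monoid morphism $\varepsilon$) yields $\varepsilon^-\cdot\ant=\Xi_0\cdot\varepsilon$, i.e.\ $\varepsilon_{a^-}\cdot\ant=\varepsilon$.

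The main obstacle is not conceptual but a matter of careful bookkeeping: one must verify that the expressions delivered by Propositions~\ref{prop:T-bullet} and~\ref{prop:T-circ} and by the two functoriality clauses of Proposition~\ref{prop:T-functoriality} really are, after composing with the structure isomorphisms $\Upsilon,\Upsilon_0,\Xi,\Xi_0$ of the duoidal equivalence $(-)^-$, the multiplication, unit, comultiplication and counit transported to $a^-$; and that $\mu,\eta,\delta,\varepsilon$ genuinely are (co)monoid morphisms in the precise sense required to invoke functoriality. Once these identifications are pinned down, each of the four equalities is immediate from the uniqueness of two-sided inverses in the relevant transform category.
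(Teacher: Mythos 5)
Your proposal is correct and follows essentially the same route as the paper's proof: all four compatibilities are obtained by producing two inverses of the same morphism in a suitable transform category (via Propositions~\ref{prop:T-functoriality}, \ref{prop:T-bullet}, \ref{prop:T-circ} and Example~\ref{ex:trivial-antipode}) and invoking uniqueness of inverses. The only differences are cosmetic (order of the four cases and phrasing of $\epsilon_{a^-}$ versus $\Xi_0.\epsilon=\epsilon^-.\ant$).
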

\proof
By Proposition~\ref{prop:transform}, we obtain a category $\ct^j_a$, where $j$
is the  composition-comonoid $(j,\xi^0,\xi_0^0)$. In order to see that $\ant$
preserves the unit, we claim that both composites 
\begin{equation}\label{eq:antipode-unit}
\xymatrix{
j\ar[r]^-\eta &
a\ar[r]^-{\ant}&
a^-}
\quad \textrm{and} \quad
\xymatrix{
j\ar[r]^-{\Upsilon_0} &
j^-\ar[r]^-{\eta^-}&
a^-}
\end{equation}
yield the inverse to $\eta$ in $\ct^j_a$. Note first that $\eta\colon j\to a$
is a comonoid morphism, and so by Proposition~\ref{prop:T-functoriality}
induces a functor $\ct^a_a\to \ct^j_a$ sending $1_a\colon X\to Y$ to
$\eta\colon X\to Y$ and $\ant\colon Y\to X$ to the composite $\ant.\eta\colon
Y\to X$. Since $1_a$ is inverse to $\ant$ in $\ct^a_a$, and functors preserve
inverses, it follows that the first expression in \eqref{eq:antipode-unit} is
the inverse to $\eta$ in $\ct^j_a$. On the other hand, $\eta\colon j\to a$ is
a morphism of monoids, and so by Proposition~\ref{prop:T-functoriality}
induces a functor $\ct^j_j\to \ct^j_a$ sending $1_j$ to $\eta$ and sending
$\Upsilon_0$ to $\eta^-.\Upsilon_0$. Recall from
Example~\ref{ex:trivial-antipode}  that $\Upsilon_0\colon j\to j^-$ is an
antipode for the bimonoid $j$,  and so is inverse in $\ct^j_j$ to
$1_j$. Functors preserve inverses, and so also the second expression in
\eqref{eq:antipode-unit} is the inverse to $\eta$ in $\ct^j_a$. This proves
that $\ant$ is compatible with the units. 

The case of counits is similar: we prove that the composites
\begin{equation}\label{eq:antipode-counit}
\xymatrix{
a\ar[r]^-\varepsilon &
i\ar[r]^-{\Xi_0} &
i^-}
\quad \textrm{and} \quad
\xymatrix{
a\ar[r]^-\ant &
a^-\ar[r]^-{\varepsilon^-}&
i^-}
\end{equation}
are both inverse in $\ct^a_i$ to $\epsilon$, and so are equal, using the fact 
that $\Xi_0\colon i\to i^-$ is an antipode. 

Since $\bullet$ is opmonoidal with respect to $\circ$, the $\bullet$-product
of two $\circ$-comonoids is a \hbox{$\circ$-comonoid}; in particular 
$a\bullet a$ is a comonoid with comultiplication $\xi.(\delta\bullet\delta)$ 
and counit $\xi_0.(\epsilon\bullet\epsilon)$. Furthermore, $\mu\colon a\bullet 
a\to a$ is a comonoid morphism, and so induces a functor 
$\ct^a_a\to \ct^{a\bullet a}_a$, sending $1_a\colon X\to Y$ to $\mu$ and sending 
$\ant\colon Y\to X$ to the composite $\ant.\mu$ appearing on the left of
\begin{equation}\label{eq:antipode-multiplication}
\xymatrix@C=15pt{
a\bullet a\ar[r]^-\mu &
a\ar[r]^-{\ant}&
a^-}
\ \  \textrm{and} \ \ 
\xymatrix@C=15pt{
a\bullet a\ar[r]^-{\ant \bullet \ant} &
a^- \bullet a^- \ar[r]^-\Upsilon &
(a\bullet a)^- \ar[r]^-{\mu^-}&
a^-}
\end{equation}
which is therefore inverse to $\mu$ in $\ct^{a\bullet a}_a$; on the other
hand, the second  expression in \eqref{eq:antipode-multiplication}  is inverse
to $\mu$ by Proposition~\ref{prop:T-bullet}, thus the two composites are
equal. This proves compatibility with the multiplication. 
 
Finally we turn to compatibility with the comultiplication. This time we use
the monoid $(a\circ a,(\mu\circ\mu).\xi,(\eta\circ\eta).\xi^0)$, the monoid
homomorphism $\delta\colon a\to a\circ a$,  and the induced functor
$\ct^a_a\to\ct^a_{a\circ a}$. This sends the inverse $\ant$ of $1_a$ to an
inverse $\delta^-.\ant$ of $\delta$ as on the left of 
\begin{equation}\label{eq:antipode-comultiplication}
\xymatrix@C=15pt{
a\ar[r]^-\ant &
a^-\ar[r]^-{\delta^-} &
(a\circ a)^-}
\ \  \textrm{and} \ \ 
\xymatrix@C=15pt{
a\ar[r]^-\delta &
a\circ a\ar[r]^-{\ant \circ \ant}&
a^- \circ a^-\ar[r]^-{\Xi} &
(a\circ a)^- .}
\end{equation}
On the other hand, the second expression in
\eqref{eq:antipode-comultiplication}  is inverse to $\delta$ by
Proposition~\ref{prop:T-circ}, thus the two composites are equal. This proves
compatibility with the comultiplication.   
\endproof

\subsection{The Galois maps}\label{sect:Galois}

In this section we investigate the relationship between the invertibility of
the Hopf maps of Section~\ref{sec:HopfMap} and the invertibility
of the Galois maps of Sections~\ref{sec:Galois} and~\ref{sec:coGalois}. 
This seems to require another assumption on the
map-monoidale. Recall that a functor is said to be {\em conservative} when it
reflects isomorphisms.

We say that an object $M$ of the monoidal bicategory \cm is {\em well-pointed}
if there is a morphism $v\colon I\to M$ in \cm for which the functor 
$\cm(v1,M)\colon\cm(M^2,M)\to\cm(M,M)$ induced by composition with $v1\colon
M\to M^2$ is conservative.  

\begin{example}
In the situation of Example~\ref{ex:braided_bialg}, where \cm has a single
object and $M$ is the trivial map-monoidale, we may take $v$ to be the
identity (which is the unit object of the corresponding braided monoidal
category). 
\end{example}
 
\begin{example}
In the situation of Example~\ref{ex:bialgebroid}, where $\cm=\Mod$ and $M$ is
a commutative ring, seen as a monoidale in \Mod, we may take $v=u$. For then
$\cm(M^2,M)$ is the category of left $M^2$-, right $M$-modules, and $\cm(M,M)$
is the category of left $M$-, right $M$-modules, while $\cm(v1,M)$ is given by
restriction of scalars. Similarly, in the situation of
Example~\ref{ex:category}, where $\cm=\Span\co$ and $M$ is just a set,  we may 
take $v=u$: this is the unique map from  $M$ to the singleton $1$, seen as a
span from $1$ to $M$. 
\end{example}

\begin{example}
In the situation of Example~\ref{ex:weak-bialgebra}, where $\cm=\Mod$ and $M$
is $R\op R$ for a separable Frobenius ring $R$, the unit $n\colon I\to R\op R$
does not have the required property; instead, we take the unique homomorphism
of rings $I\to R\op R$ as our $v$, so that $\cm(v1,M)$ is once again given by
restriction of scalars.  
\end{example}

\begin{example}
In the situation of  Example~\ref{ex:Prof}, where $\cm=\Prof$ and $M$ is a
monoidal category with duals, we may take $v$ to be the profunctor given, as a
functor $M\op=1\x M\op\to\Set$, by $v(x)=\sum_{y\in M} M(x,y)$.
\end{example}

\begin{theorem}\label{thm:coGalois}
For a map-monoidale $(M,m,u)$ in a monoidal bicategory \cm, and a monoidal
comonad $a$ on $(M,m,u)$, consider the following assertions.
\begin{enumerate}[(a)]
\item the Hopf map $\hat{\beta}\colon m.aa\to a.m.a1$  in
  \eqref{eq:beta-hat}  is invertible;
\item the co-Galois maps $\zeta_{p,x}\colon p\bullet(x\circ a)\to (p\bullet
  x)\circ a$  in \eqref{eq:zeta}  are invertible for  every 1-cell $x$ and
  every comodule $p$;   
\item the co-Galois maps $\zeta_{p,i}\colon p\bullet a\to(p\bullet i)\circ a$
  are invertible for every comodule $p$.    
\item For any left $a$-module $(q,\gamma)$, right $a$-comodule $(p,\varrho)$,
  and 1-cell $x:M\to M$, the 2-cell 
\begin{equation}\label{eq:left_can}
\xymatrix{
p\bullet  (x\circ q) \ar[r]^-{\varrho\bullet 1} &
(p\circ a)\bullet (x\circ q) \ar[r]^-\xi &
(p\bullet x) \circ (a\bullet q) \ar[r]^-{1\circ\gamma} &
(p\bullet x)\circ q}
\end{equation}
is invertible.
\end{enumerate}
Then (d)$\Rightarrow$(b)$\Rightarrow$(c) and (a)$\Rightarrow$(b). If the
object $M$ is well-pointed, then also (c)$\Rightarrow$(a). If the
map-monoidale $(M,m,u)$ is naturally Frobenius, then (a)$\Rightarrow$(d). In 
particular, if $(M,m,u)$ is a well-pointed naturally Frobenius map-monoidale 
then all four conditions are equivalent.
\end{theorem}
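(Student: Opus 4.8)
The plan is to prove the two trivial specializations first, then the three substantive implications (a)$\Rightarrow$(b), (c)$\Rightarrow$(a) and (a)$\Rightarrow$(d), and finally to assemble the cycle (a)$\Rightarrow$(d)$\Rightarrow$(b)$\Rightarrow$(c)$\Rightarrow$(a) to obtain the equivalence of all four conditions when $M$ is a well-pointed naturally Frobenius map-monoidale. For (b)$\Rightarrow$(c) I would simply set $x=i$ in $\zeta_{p,x}$, using $i\circ a\cong a$, so that (c) is literally the $x=i$ instance of (b). For (d)$\Rightarrow$(b) I would take $q$ to be the regular left module $(a,\mu)$; comparing the canonical 2-cell \eqref{eq:left_can} with \eqref{eq:zeta} shows that the map of (d) is then exactly the co-Galois map $\zeta_{p,x}$, so its invertibility for all comodules $p$ and all $x$ is precisely assertion (b).

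For (a)$\Rightarrow$(b) I would first handle the cofree comodules. Unwinding \eqref{eq:beta-hat} as $\hat{\beta}=(a_2.a1)\circ(m.\delta a)$ and comparing with \eqref{eq:zeta}, one checks that for the cofree comodule $(a,\delta)$ the co-Galois map $\zeta_{a,x}$ is just $\hat{\beta}$ whiskered on the right by $Mx.m^*$, and more generally that the co-Galois map of any cofree comodule is obtained from $\hat{\beta}$ by whiskering; hence (a) gives invertibility of $\zeta$ on all cofree comodules. To pass to an arbitrary comodule $(p,\varrho)$ I would use that the coaction exhibits $p$ as the split, hence absolute, equalizer of the cofree comodules $p\circ a\rightrightarrows p\circ a\circ a$. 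Both functors $p\mapsto p\bullet(x\circ a)$ and $p\mapsto(p\bullet x)\circ a$ preserve this absolute equalizer, and $\zeta_{-,x}$ is a morphism of the two equalizer diagrams which is invertible on the cofree terms; therefore $\zeta_{p,x}$ is invertible as well.

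For (c)$\Rightarrow$(a) the well-pointedness hypothesis enters. Since $\hat{\beta}$ is a 2-cell in $\cm(M^2,M)$ and the functor $\cm(v1,M)\colon\cm(M^2,M)\to\cm(M,M)$ is conservative, it suffices to prove that $\cm(v1,M)(\hat{\beta})=\hat{\beta}.(v1)$ is invertible in $\cm(M,M)$. Using the explicit form of $\hat{\beta}$ and the coaction of the cofree comodule, I would identify this whiskered 2-cell with a co-Galois map $\zeta_{p,i}$ for a comodule $p$ manufactured from the point $v$; concretely, the cofree structure already yields $\zeta_{a,i}\cong\hat{\beta}.m^*$, and well-pointedness is what allows one to descend from the whiskering by $m^*$ to detection by $v1$. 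Condition (c) then makes this co-Galois map invertible, and conservativity of $\cm(v1,M)$ lifts this to invertibility of $\hat{\beta}$ itself. Reconciling the whiskering by $m^*$ with detection by $v1$, i.e. pinning down the correct comodule $p$, is the delicate point of this step.

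Finally, for (a)$\Rightarrow$(d) I would invoke the naturally Frobenius hypothesis through Theorem~\ref{thm:Hopf}: under it, invertibility of $\hat{\beta}$ is equivalent to the existence of an antipode $\ant\colon a\to a^-$. Using $\ant$ together with the coaction $\varrho$ of $p$ and the action $\gamma$ of $q$, I would write down an explicit candidate inverse $(p\bullet x)\circ q\to p\bullet(x\circ q)$ to the canonical 2-cell \eqref{eq:left_can}, generalizing the classical formula in which the inverse of the Hopf--Galois map is built from the antipode. Verifying that this is a genuine two-sided inverse is the main obstacle of the whole theorem: it requires a diagram chase combining the antipode identities of Theorem~\ref{thm:antipode}, the bimonoid axioms, and the (co)module axioms for $p$ and $q$. (Alternatively one could reduce, via the split coequalizer presenting $q$ as a colimit of free modules, to the case of a free module; but even there the Frobenius condition is needed in order to match the resulting canonical map with the co-Galois maps of (b).) With all these implications established, the cycle (a)$\Rightarrow$(d)$\Rightarrow$(b)$\Rightarrow$(c)$\Rightarrow$(a) yields the equivalence of the four conditions for a well-pointed naturally Frobenius map-monoidale, as claimed.
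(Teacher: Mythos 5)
Your treatment of (d)$\Rightarrow$(b), (b)$\Rightarrow$(c) and (a)$\Rightarrow$(b) matches the paper: the specializations $q=(a,\mu)$ and $x=i$ are exactly what is done there, and your absolute-equalizer argument for passing from cofree comodules to arbitrary ones is just a fleshed-out version of the paper's one-line reduction by naturality of $\zeta_{-,x}$. For (c)$\Rightarrow$(a), however, the point you yourself flag as ``delicate'' is a real hole that you do not fill. Your suggestion that $\zeta_{a,i}\cong\hat\beta.m^*$ and that ``well-pointedness allows one to descend from the whiskering by $m^*$ to detection by $v1$'' does not work as stated: conservativity of $\cm(v1,M)$ says nothing about whiskering by $m^*$. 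The fix is to apply (c) not to $a$ but to the cofree comodule on $y=v.u^*$; then the $u^*$ in $y$ cancels the $m^*$ coming from the convolution product via the counit isomorphism $u^*1.m^*\cong 1$, and $\zeta_{y\circ a,i}$ becomes literally $\hat\beta$ precomposed with $v1$, to which conservativity applies. Without identifying this comodule the implication is not proved.

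The implication (a)$\Rightarrow$(d) is where your proposal genuinely diverges from the paper, and where the main gap lies. Your primary route --- invoke Theorem~\ref{thm:Hopf} to obtain an antipode $\ant\colon a\to a^-$ and write down ``the classical formula'' for the inverse of \eqref{eq:left_can} --- does not obviously typecheck: $\ant$ lands in $a^-$, a different 1-cell from $a$, so one cannot simply ``apply $S$ and then act''; any candidate inverse would have to be routed through the structure maps $\phi$, $\psi$, $\Upsilon$, etc., and you give no such formula, let alone verify the two inverse laws (which you concede is ``the main obstacle of the whole theorem''). The paper avoids antipodes entirely here: by naturality of \eqref{eq:left_can} in both $q$ and $p$ it suffices to treat free modules $(a\bullet z,\mu\bullet z)$ and cofree comodules $(y\circ a,y\circ\delta)$, and for these a direct pasting computation exhibits the canonical 2-cell as the composite of the Frobenius isomorphism $\pi$ (this is exactly where the naturally Frobenius hypothesis enters --- in fact only $\pi$, not $\pi'$, is used) with a whiskering of $\hat\beta$, hence invertible under (a). Your parenthetical alternative is essentially this argument, but you neither carry out the reduction on the comodule side nor perform the computation identifying the reduced map, so as written the implication --- and with it the final equivalence of all four conditions --- is not established.
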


\proof
Substituting $(q,\gamma)=(a,\mu)$ in (d) we obtain (b), and substituting $x=i$
in (b) we get (c).

For (a)$\Rightarrow$(b), first observe that the co-Galois maps $\zeta_{p,x}$
are natural with respect to comodule morphisms $(p,\rho)\to(p',\rho')$, thus
they will be invertible for every comodule $(p,\rho)$ if and only if they are
invertible for every cofree comodule $(y\circ a,y\circ \delta)$. For a
cofree comodule the co-Galois map has the 
form   
$$\xymatrix{
M \ar[r]^-{m^*} & M^2 \ar[r]^-{yx} & 
M^2 \ar@/^2pc/[rr]^{aa}_(0.71){~}="1" \ar[r]_-{aM}  & 
M^2 \ar[r]_{aa}="2"^(0.4){~}="4" \ar[d]_-{m} & MM \ar[d]^-{m} \\
&&& M \ar[r]_{a}^{~}="3" & M 
\ar@{=>}"2";"3"^{a_2} 
\ar@{=>}"1";"4"_(0.3){\delta a}
}$$
and this will clearly be invertible if the Hopf map is invertible. Thus (a)
implies (b). 

Next we show that, under the additional assumption that $M$ is well-pointed,
(c) implies (a). Consider the case $x=i$ and $y=v.u^*$. Using the counit
isomorphism $u^*1.m^*\cong 1$, the co-Galois map $\zeta_{y\circ a,x}$
becomes
$$\xymatrix{
M \ar[r]^-{vM} & 
M^2 \ar@/^2pc/[rr]^{aa}_(0.71){~}="1" \ar[r]_-{aM}  & 
M^2 \ar[r]_{aa}="2"^(0.4){~}="4" \ar[d]_-{m} & MM \ar[d]^-{m} \\
&& M \ar[r]_{a}^{~}="3" & M 
\ar@{=>}"2";"3"^{a_2} 
\ar@{=>}"1";"4"_(0.3){\delta a}
}$$
and invertibility of this, together with the assumption that $\cm(vM,M)$ is
conservative implies invertibility of the Hopf map. 

Assume now that $(M,m,u)$ is a naturally Frobenius map-monoidale. 
Since the 2-cell \eqref{eq:left_can} is natural both in the module $q$ and
the comodule $p$, it is an isomorphism for every $q$ and $p$ if and only if it
is so for the free modules $(a\bullet z,\mu\bullet z)$ and the free comodules
$(y\circ a,y\circ\delta)$ (for arbitrary 1-cells $y,z:M\to M$). With these
choices it takes the form  
$$
\xymatrix{
M\ar[r]^-{m^*} &
M^2\ar[r]^-{yx} &
M^2 \ar[rr]^-{1m^*} \ar[d]_-{a1} &&
M^3 \ar[r]^-{aaz}_-{~}="5" \ar[d]_-{a11} &
M^3 \ar[r]^-{1m} \ar@{=}[d] &
M^2 \ar[dd]^-m\\
&&
M^2 \ar@{=}[r]_(.35){~}="1" \ar[d]_-m &
M^2 \ar[r]^-{1m^*} &
M^3 \ar[r]^-{aaz}="6"_-{~}="3" &
M^3 \ar[d]^-{m1} \\
&&
M \ar[rr]_-{m^*} \ar[ru]_-{m^*}^(.44){~}="2" &&
M^2 \ar[r]_-{az}^-{~}="4" \ar[u]^-{m^*1} &
M^2 \ar[r]_-m &
M
\ar@{=>}"1";"2"_-{\eta_m}
\ar@{=>}"3";"4"^-{\mu 1}
\ar@{=>}"5";"6"^-{\delta 11}}
$$
which is equal to 
$$
\xymatrix{
M\ar[r]^-{m^*} &
M^2\ar[r]^-{yx} &
M^2 \ar[r]^-{1m^*} \ar[d]_-{a1} &
M^3 \ar[r]^-{aaz}_-{~}="5" \ar[d]^-{a11} &
M^3 \ar[r]^-{1m} \ar[dd]^-{m1} &
M^2 \ar[dd]^-m\\
&&
M^2 \ar[r]^-{1m^*}_{~}="1" \ar[d]_-m &
M^3 \ar[d]^-{m1} \\
&&
M \ar[r]_-{m^*}^{~}="2"  &
M^2 \ar[r]_-{az}^-{~}="4" &
M^2 \ar[r]_-m &
M
\ar@{=>}"1";"2"^-{\pi}
\ar@{=>}"5";"4"^-{\hat \beta 1}}
$$
proving (a)$\Rightarrow$(d).
\endproof

Note that the above proof of the implication (a)$\Rightarrow$(d) makes
use, in fact, only of the invertibility of the 2-cell $\pi$ but not the
invertibility of $\pi'$. 

The dual result, relating invertibility of the co-Hopf map of Section~\ref{sec:HopfMap} 
 and the Galois maps  of Section~\ref{sec:Galois}, partly follows by symmetry considerations. 
We say that an object $M$ of a monoidal bicategory \cm is {\em
well-copointed} whenever it is well-pointed as an object of
$\cm\op$. That is, when there is a morphism $w\colon M\to I$ for which 
$\cm(M,w1)\colon\cm(M,M^2)\to\cm(M,M)$, the functor induced by
composition with $w1\colon M^2\to M$, is conservative. 

Our main interest is of course when $M$ underlies a naturally Frobenius 
map-monoidale $(M,m,u)$ in $\cm$, and then the object $M$ is well-pointed if 
and only if it is well-copointed. Indeed, for $v:I\to M$ the induced functor 
$\cm(v1,M):\cm(M^2,M) \to \cm(M,M)$ is conservative if and only if the functor 
$\cm(M,v^+1)\colon\cm(M,M^2)\to\cm(M,M)$ is so, for 
$$
v^+=
\xymatrix{
M \ar[r]^-{v1} &
M^2 \ar[r]^-m &
M \ar[r]^-{u^*} &
I  .}
$$
In particular, in each of Examples~\ref{ex:braided_bialg}, 
\ref{ex:bialgebroid}, \ref{ex:weak-bialgebra}, \ref{ex:category}, and 
\ref{ex:Prof}, the relevant object  $M$ is well-copointed. 

\begin{theorem}\label{thm:Galois}
For a map-monoidale $(M,m,u)$ in a monoidal bicategory \cm, and a monoidal
comonad $a$ on $(M,m,u)$, consider the following assertions. 
\begin{enumerate}[(a)]
\item the co-Hopf map $\hat{\zeta}\colon aa.m^*\to 1a.m^*.a$ of
  \eqref{eq:zeta-hat} is invertible; 
\item the Galois maps $\beta_{q,x}\colon (q\circ x)\bullet a\to
  q\circ(x\bullet a)$ of \eqref{eq:beta} are invertible for every 1-cell $x$
  and every module $q$;
\item the Galois maps $\beta_{q,j}\colon (q\circ j)\bullet a\to q\circ a$ are
  invertible for every module $q$. 
\item For any right $a$-module $(q,\gamma)$, left $a$-comodule $(p,\varrho)$,
  and 1-cell $x:M\to M$, the 2-cell
\begin{equation}\label{eq:right_can}
\xymatrix{
(q\circ x) \bullet  p \ar[r]^-{1 \bullet \varrho} &
(q\circ x)\bullet (a\circ p) \ar[r]^-\xi &
(q\bullet a) \circ (x\bullet p) \ar[r]^-{\gamma\circ1} &
q\circ(x\bullet p)}
\end{equation}
is invertible.
\end{enumerate}
Then (d)$\Rightarrow$(b)$\Rightarrow$(c). If the object $M$ is
well-copointed, then also (c)$\Rightarrow$(a). If the map-monoidale $(M,m,u)$
is naturally Frobenius, then (a)$\Rightarrow$(d). In particular, if $(M,m,u)$ 
is a well-pointed naturally Frobenius map-monoidale then all four conditions 
are equivalent.
\end{theorem}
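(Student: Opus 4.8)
The plan is to deduce this theorem from its companion Theorem~\ref{thm:coGalois} by the map-monoidale duality of Section~\ref{sec:map_monoidale}. Recall that $(M,m,u)$ determines a map-monoidale $(M,m,u)^*=(M,m^*,u^*)$ in $\cm\oprev$, which is naturally Frobenius exactly when $(M,m,u)$ is, and that $a$ may equally be viewed as a monoidal comonad on $(M,m,u)^*$. Under this passage the products $\circ$ and $\bullet$, the units $i$ and $j$, and modules and comodules all change places. In particular, as recorded in Section~\ref{sec:HopfMap}, the co-Hopf map $\hat\zeta$ of \eqref{eq:zeta-hat} is the Hopf map of $a$ on $(M,m,u)^*$; the Galois maps $\beta_{q,x}$ of \eqref{eq:beta} are the co-Galois maps of $a$ on $(M,m,u)^*$; and the 2-cell \eqref{eq:right_can} is the 2-cell \eqref{eq:left_can} for $(M,m,u)^*$. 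Thus conditions (a)--(d) here are, term by term, the conditions (a)--(d) of Theorem~\ref{thm:coGalois} read off from $(M,m,u)^*$.

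First I would dispose of the two elementary implications directly, as in Theorem~\ref{thm:coGalois}. For (d)$\Rightarrow$(b) I substitute for the left $a$-comodule $(p,\varrho)$ the comonad $a$ itself with coaction $\delta$; the 2-cell \eqref{eq:right_can} then becomes, on comparison with \eqref{eq:beta}, precisely the Galois map $\beta_{q,x}$. For (b)$\Rightarrow$(c) I specialise $x=j$ and use the unit isomorphism $j\bullet a\cong a$.

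The implications (c)$\Rightarrow$(a) and (a)$\Rightarrow$(d) I would then obtain by invoking the matching implications of Theorem~\ref{thm:coGalois} for $(M,m,u)^*$ in $\cm\oprev$. That $M$ is well-copointed in $\cm$ means, by definition, that it is well-pointed in $\cm\op$; this supplies (after the reversal of tensor implicit in passing to $\cm\oprev$) the well-pointedness hypothesis of Theorem~\ref{thm:coGalois} needed for its (c)$\Rightarrow$(a), while the natural Frobenius property of $(M,m,u)^*$ feeds its (a)$\Rightarrow$(d). For the final sentence, under the stated hypotheses one runs the cycle (a)$\Rightarrow$(d)$\Rightarrow$(b)$\Rightarrow$(c)$\Rightarrow$(a); here the well-pointedness that is assumed is converted into the well-copointedness that (c)$\Rightarrow$(a) requires by the equivalence of the two notions in the naturally Frobenius case noted before the theorem, namely $v\mapsto v^+=u^*.m.v1$.

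I expect the main obstacle to be bookkeeping rather than depth: one must verify carefully that the map-monoidale duality really carries each of \eqref{eq:zeta-hat}, \eqref{eq:beta}, and \eqref{eq:right_can} to the corresponding datum of Theorem~\ref{thm:coGalois} for $(M,m,u)^*$, including the interchange of left and right tensor factors forced by $\cm\rev$, and that the various pointedness conditions line up across $\cm\op$, $\cm\rev$, and $\cm\oprev$. Should any identification remain in doubt, the safe alternative is to re-run the explicit diagram chases of Theorem~\ref{thm:coGalois} in the present setting: for (a)$\Rightarrow$(d) one evaluates \eqref{eq:right_can} on free modules and cofree comodules and, using one of the Frobenius isomorphisms together with unitality and associativity of $m$, exhibits it as a pasting involving $\hat\zeta$; for (c)$\Rightarrow$(a) one specialises the Galois map of a suitable module and applies conservativity of $\cm(M,w1)$ to deduce invertibility of $\hat\zeta$.
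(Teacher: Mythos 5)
Your reduction to Theorem~\ref{thm:coGalois} via $(M,m,u)^*$ in $\cm\oprev$ rests on the claim that under this passage ``the products $\circ$ and $\bullet$ \dots\ and modules and comodules all change places'', so that in particular the Galois maps \eqref{eq:beta} become the co-Galois maps of $a$ on $(M,m,u)^*$. This is false, and it is the gap in your argument. Passing to $(M,m,u)^*$ in $\cm\oprev$ \emph{reverses} each of the two monoidal structures but does not \emph{interchange} them: one gets $x\circ' y=y\circ x$ and $x\bullet' y=y\bullet x$, so a convolution monoid stays a convolution monoid and a composition comonoid stays a composition comonoid (this is exactly the content of Remark~\ref{rem:symmetry_of_T}), right modules become left modules, and right comodules become left comodules. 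Consequently condition (b) of Theorem~\ref{thm:coGalois} read off from $(M,m,u)^*$ is the invertibility of the 2-cells $(a\circ x)\bullet p\to a\circ(x\bullet p)$ for all \emph{left $a$-comodules} $p$ --- i.e.\ the special case $q=(a,\mu)$ of \eqref{eq:right_can} --- and not the invertibility of $\beta_{q,x}$ for all modules $q$; similarly for condition (c). So your derivation of (c)$\Rightarrow$(a) by ``invoking the matching implication'' of Theorem~\ref{thm:coGalois} does not go through: the hypothesis you would be feeding into that implication is not condition (c) of the present theorem. (The duality that genuinely swaps $\circ$ with $\bullet$, hence Galois with co-Galois maps, is $\cd\op$ of the duoidal category, and the paper explicitly notes that this one is \emph{not} realized by a map-monoidale; this is why it says the dual result only ``partly follows by symmetry''.) The paper instead proves (c)$\Rightarrow$(a) directly, by evaluating $\beta_{q,j}$ at the free module $q=y\bullet a$ for the specific 1-cell $y=w1.m^*.u.u^*$, identifying the result up to coherence with a whiskering of $\hat\zeta$, and applying conservativity; your one-sentence fallback gestures at this but does not supply the choice of $q$ or the identification.

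The rest of your proposal is sound. Conditions (a) and (d) \emph{are} fixed by the $(M,m,u)^*$ symmetry: $\hat\zeta$ is the Hopf map of $a$ on $(M,m,u)^*$, and \eqref{eq:right_can} is exactly \eqref{eq:left_can} for $(M,m,u)^*$ (left/right roles of modules and comodules and the two tensor factors all match up after the double reversal). So your deduction of (a)$\Rightarrow$(d) from Theorem~\ref{thm:coGalois}(a)$\Rightarrow$(d) applied to $(M,m,u)^*$ is legitimate, and is in fact a different (and arguably lighter) route than the paper's, which instead transports modules and comodules along the self-duality $(-)^-$ coming from the naturally Frobenius structure and invokes \eqref{eq:beta-zeta_duality}; the only price of your route is that it consumes the invertibility of $\pi'$ where the paper's remark shows only $\pi$ is needed for the un-dualized statement. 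The implications (d)$\Rightarrow$(b)$\Rightarrow$(c) and the bookkeeping of well-pointed versus well-copointed via $v\mapsto v^+$ are exactly as in the paper.
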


\proof
Substituting $(p,\varrho)=(a,\delta)$ in (d) we obtain (b) and substituting
$x=j$ in (b) we obtain (c). 

In order to see that, under the additional assumption that $M$ is
well-copointed, (c) implies (a), consider the 1-cell 
$$
y:=\xymatrix{
M \ar[r]^-{u^*} &
I \ar[r]^-u & 
M \ar[r]^-{m^*} &
M^2 \ar[r]^-{w1} &
M}
$$
--- where $\cm(M,wM)$ is assumed to be conservative --- and make in (b) the
choices $(q,\gamma)=(y\bullet a,y\bullet \mu)$ and $x=j$. Then the Galois map
$\beta_{q,x}$ differs only by coherence isomorphisms from 
$$
\xymatrix{
M^2 \ar@/^2pc/[rr]^-{aa}_(0.35){~}="1" \ar[r]_-{aa}="3"^(0.72){~}="2" & 
M^2 \ar[r]_-{1a} & M^2 \ar[r]^-{w} & M \\
M \ar[u]^-{m^*} \ar[r]_-{a}^{~}="4" & M \ar[u]_-{m^*} 
\ar@{=>}"1";"2"_{a\delta}
\ar@{=>}"3";"4"^{a^2} 
}
$$
and invertibility of this, together with the assumption that $\cm(M,w M)$ is
conservative implies invertibility of the co-Hopf map.  

It remains to prove that, whenever $(M,m,u)$ is a naturally Frobenius
map-monoidale, (a) implies (d). Any right $a$-module $(q,\gamma)$ determines a
module  
$$
\xymatrix{
a^-\bullet q^-\ar[r]^-\Upsilon &
(q\bullet a)^- \ar[r]^-{\gamma^-} &
q^-}
$$
for the bimonoid $a^-$ in
\cm\oprev. Symmetrically, any left $a$-comodule $(p,\varrho)$ determines an
$a^-$-comodule 
$$ 
\xymatrix{
p^-\ar[r]^-{\varrho^-} &
(a\circ p)^- \ar[r]^-{\Xi^{-1}} &
p^- \circ a^-.}
$$
With these constructions, strong duoidality of the functor $(-)^-:\cm(M,M)\to$
$\cm\oprev(M,M)$ yields a commutative diagram
$$
\xymatrix{
p^-\bullet (x^- \circ q^-) \ar[d]_-{\eqref{eq:left_can}}
\ar[r]^-{1\bullet \Xi} &
p^- \bullet  (q\circ x)^-  \ar[r]^-\Upsilon &
((q\circ x)\bullet p)^- \ar[d]^-{\eqref{eq:right_can}^-} \\
(p^-\bullet x^-)\circ q^- \ar[r]_-{\Upsilon\circ1} &
 (x\bullet p)^-\circ q^- \ar[r]_-\Xi &
(q\circ(x\bullet p))^- }
$$
whose horizontal arrows are invertible.
Thus in view of Theorem \ref{thm:coGalois} and \eqref{eq:beta-zeta_duality},
we conclude that (a) implies (d). 
\endproof
 
\subsection{The case of trivial bialgebras}

For the duoidal hom-category of a naturally Frobenius map-monoidale, it
follows by Example \ref{ex:trivial-antipode} and Theorem \ref{thm:Hopf}  
that the co-Hopf morphism $\hat\zeta$ is invertible for the $\circ$-trivial
bialgebra $i$; and the Hopf morphism $\hat\beta$ is invertible for the
$\bullet$-trivial bialgebra $j$ of Example~\ref{ex:trivial}. 

In any duoidal category $\cd$ with monoidal structures $(\circ,i)$ and
$(\bullet,j)$, we write $\cd^j$ for the category of right
$j$-comodules, and $V\colon \cd^j \to \cd$ for the forgetful functor; of
course this has a right adjoint $G\colon\cd\to\cd^j$ sending an object $x$ to
$x\circ j$ with its canonical comodule structure.

Similarly, we write $\cd_i$ for the category of right $i$-modules, and $U\colon
\cd_i\to\cd$ for the forgetful functor; of course this has a left adjoint
$F\colon\cd\to\cd_i$ sending an object $x$ to $x\bullet i$ with its canonical
module structure.  

Thus we have a pair of adjunctions
\begin{equation}\label{eq:i-j_adjunction}
\xymatrix{
\cd^j \ar@<1ex>[r]^-{V} \ar@{}[r]|{\bot} & 
\cd \ar@<1ex>[l]^-{G} \ar@<1ex>[r]^-{F} \ar@{}[r]|{\bot} & 
\cd_i  \ar@<1ex>[l]^-{U} }
\end{equation}
and so a composite adjunction $FV\dashv GU$.

\begin{proposition}\label{prop:trivial-cotrivial}
If $(M,m,u)$ is a naturally Frobenius map-monoidale in a monoidal bicategory
\cm, then the composite  adjunction constructed as in \eqref{eq:i-j_adjunction} 
defines an equivalence $\cm(M,M)^j$ $\simeq \cm(M,M)_i$. 
\end{proposition}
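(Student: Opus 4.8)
The plan is to use the standard fact that an adjunction is an adjoint equivalence precisely when its unit and counit are both invertible, and to verify this for the composite adjunction $FV\dashv GU$ of \eqref{eq:i-j_adjunction}. So the first step is to make the unit and counit explicit by pasting together the (co)units of $V\dashv G$ and $F\dashv U$. Writing $\nu\colon p\cong p\bullet j\xrightarrow{1\bullet\xi^0_0}p\bullet i$ for the unit of $F\dashv U$ and $\kappa\colon q\circ j\xrightarrow{1\circ\xi^0_0}q\circ i\cong q$ for the counit of $V\dashv G$, one finds that the unit of $FV\dashv GU$ at a $j$-comodule $(p,\rho)$ is the composite $p\xrightarrow{\rho}p\circ j\xrightarrow{\nu\circ j}(p\bullet i)\circ j$, while the counit at an $i$-module $(q,\gamma)$ is $(q\circ j)\bullet i\xrightarrow{\kappa\bullet i}q\bullet i\xrightarrow{\gamma}q$. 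The whole proposition then reduces to showing that these two canonical $2$-cells are invertible for every comodule and every module.

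The second step is the invertibility itself, and this is where the naturally Frobenius hypothesis must enter. I would first treat the (co)free objects, where the situation is transparent: for a free module $q=z\bullet i$ the counit is, up to the coherence isomorphism $(z\bullet i)\circ i\cong z\bullet i$, exactly the comparison map $((z\bullet i)\circ j)\bullet i\to(z\bullet i)\circ i$ of Remark~\ref{rmk:double} (with $a=z$ and $b=i$), which is asserted there to be invertible in the duoidal category of a naturally Frobenius map-monoidale; dually, the unit at a cofree comodule $p=y\circ j$ is an instance of the mirror-image comparison map, again invertible. This simultaneously confirms the shape of the inverses and shows that they are assembled from the Frobenius isomorphisms $\pi$ and $\pi'$ (equivalently, from the self-duality of $M$). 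I would then produce, for an arbitrary (co)module, a natural candidate inverse built from $\pi$, $\pi'$, the counit $\xi^0_0$, the maps $\xi$ and $\xi_0$, and the (co)module structure $\rho$ (resp.\ $\gamma$), and verify that the two triangle composites are identities by invoking (co)associativity and (co)unitality of the (co)module together with the triangle equations for $m\dashv m^*$ and $u\dashv u^*$.

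The main obstacle I anticipate is precisely this passage from (co)free to arbitrary (co)modules, that is, the construction of a genuine two-sided inverse rather than a one-sided comparison. In a general duoidal category the relevant double-algebra axioms hold only laxly, so only unidirectional comparison maps exist (Remark~\ref{rmk:double}); it is the natural Frobenius-ness --- via the invertibility of $\pi$ and $\pi'$ --- that upgrades these to isomorphisms, and the delicate part is the bookkeeping of the pasting diagrams, keeping straight the interaction of $\circ$, $\bullet$, and the adjunction data $m,m^*,u,u^*$ with the coherence constraints of $M$. As a consistency check one may note that $i$ and $j$ are themselves Hopf, with antipodes $\Xi_0$ and $\Upsilon_0$ (Example~\ref{ex:trivial-antipode} and Theorem~\ref{thm:Hopf}), so this proposition is the fundamental theorem of Hopf modules specialised to the two trivial bialgebras, and the direct argument above is, in effect, the base case underlying the later fundamental-theorem results of the paper.
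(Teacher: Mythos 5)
Your first step agrees with the paper: the unit and counit of the composite adjunction are exactly the composites you write down, and the proof does reduce to showing that these are invertible for every comodule and every module. After that, however, your plan both misses the device that makes the paper's proof short and leaves the substantive work undone. The ``passage from (co)free to arbitrary (co)modules'' that you single out as the main obstacle is not an obstacle, and no explicit inverse for a general (co)module needs to be constructed. The paper uses that $V$ and $U$ are conservative to test invertibility of the unit and counit after applying $V$ and $U$, and then the standard absoluteness argument: every $j$-comodule is a split equalizer in $\cm(M,M)$ of cofree comodules (the splitting coming from the comonad counit), so a natural transformation invertible at all cofree comodules is invertible everywhere; dually for $i$-modules and free modules. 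Invertibility at the (co)free objects therefore already finishes the proof.

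At the (co)free objects your argument is also not complete: you identify the counit at $z\bullet i$ with the comparison map of Remark~\ref{rmk:double} and cite that remark's assertion of invertibility, but the remark is stated without proof, so nothing is thereby established; and the ``natural candidate inverse built from $\pi$, $\pi'$, \dots'' is never written down. The observation you relegate to a consistency check at the end is in fact the intended proof: the unit at a cofree comodule $x\circ j$ is, up to unit isomorphisms, the co-Galois map $\zeta_{x\circ j,i}$ of \eqref{eq:zeta} for the $\bullet$-trivial bialgebra $j$, and the counit at a free module $x\bullet i$ is the Galois map $\beta_{x\bullet i,j}$ of \eqref{eq:beta} for the $\circ$-trivial bialgebra $i$. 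Since $i$ and $j$ admit antipodes (Example~\ref{ex:trivial-antipode}), their Hopf and co-Hopf maps are invertible by Theorem~\ref{thm:Hopf}, and the implications (a)$\Rightarrow$(b) of Theorems~\ref{thm:coGalois} and~\ref{thm:Galois} --- which require only the naturally Frobenius hypothesis, not well-pointedness --- yield the needed invertibility with no further diagram chasing. So the skeleton of your proof is right, but the two load-bearing steps are precisely the ones left as intentions, and one of them dissolves under a formal reduction you did not invoke.
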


\proof
Write $n\colon 1\to UF$ and $e\colon FU\to 1$ for the unit and counit of the
adjunction $F\dashv U$, and write $h\colon 1\to GV$ and $d\colon VG\to 1$ for
the unit and counit of the adjunction $V\dashv G$. Then the composite
adjunction $FV\dashv GU$ has unit and counit given by  
$$\xymatrix @R0pc {
1 \ar[r]^-{h} & GV \ar[r]^-{GnV} & GUFV}
\quad \textrm{and}\quad
\xymatrix @R0pc {
FVGU \ar[r]^-{FdU} & FU \ar[r]^-{e} & 1. }$$
First consider the unit. Since $V$ is conservative, the unit will be
invertible if and only if the composite  
$$\xymatrix @C3pc {
V \ar[r]^-{Vh} & VGV \ar[r]^-{VGnV} & VGUFV }$$
is invertible; in other words, if for each $j$-comodule $(p,\rho)$ the
corresponding component is invertible. But this will be true for every
$j$-comodule if and only if it is true for every cofree comodule $x\circ j$. 

The component at $x\circ j$ of the unit is the composite
$$\xymatrix@C=35pt{
x\circ j \ar[r]^-{1 \circ\xi^0} & x\circ j\circ j \ar@{=}[r] & 
((x\circ j)\bullet j)\circ j \ar[r]^-{(1\bullet\xi^0_0)\circ1} &
((x\circ j)\bullet i)\circ j }$$
which, up to composition with unit isomorphisms for the duoidal category
$\cm(M,M)$, is the co-Galois morphism $\zeta_{x\circ j,i}$ for the monoidal
comonad $j$. This is invertible by Example~\ref{ex:trivial-antipode} and
Theorem~\ref{thm:coGalois}.

As for the counit, since $U$ is conservative, this will be invertible if and
only if the composite  
$$\xymatrix{
UFVGU \ar[r]^-{UFdU} & UFU \ar[r]^-{Ue} & U }$$
is invertible; in other words, if for each $i$-module $(q,\gamma)$ the
corresponding component is invertible. But this will be true for every
$i$-module if and only if it is true for every free $i$-module $x\bullet i$.  

The component at $x\bullet i$ of the counit is the composite 
$$
\xymatrix@C=40pt{
((x\bullet i)\circ j) \bullet i 
\ar[r]^-{(1\circ\xi^0_0)\bullet1} &
((x\bullet i)\circ i) \bullet i \ar@{=}[r] & 
x\bullet i\bullet i \ar[r]^-{1 \bullet \xi_0} & x\bullet i }
$$
which, up to composition with unit isomorphisms for the duoidal category
$\cm(M,M)$, is the Galois morphism $\beta_{x\bullet i,j}$ for the monoidal
comonad $i$. This is invertible by  Example~\ref{ex:trivial-antipode}, 
Theorem \ref{thm:Hopf}, and Theorem~\ref{thm:Galois}.  
\endproof

\begin{proposition}
For a naturally Frobenius map-monoidale $(M,m,u)$ in a monoidal bicategory
\cm, the equivalence in Proposition \ref{prop:trivial-cotrivial} is a strong
monoidal equivalence $\cm(M,M)^j \simeq \cm(M,M)_i$.
\end{proposition}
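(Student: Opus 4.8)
The plan is to equip the equivalence $GU\colon\cm(M,M)_i\to\cm(M,M)^j$ of Proposition~\ref{prop:trivial-cotrivial} with a strong monoidal structure; since an inverse of a strong monoidal equivalence is again strong monoidal, the quasi-inverse $FV$ will then inherit one. Write $\cd=\cm(M,M)$. By Section~\ref{sec:coGalois} the category $\cd^j$ is monoidal under $\bullet$ with unit the $\bullet$-trivial bialgebra $(j,\xi^0)$ of Example~\ref{ex:trivial}, and by Section~\ref{sec:Galois} the category $\cd_i$ is monoidal under $\circ$ with unit the $\circ$-trivial bialgebra $(i,\xi_0)$. The forgetful functor $V\colon\cd^j\to\cd$ is strict monoidal for $\bullet$ and, being conservative (as already used in Proposition~\ref{prop:trivial-cotrivial}), it will detect invertibility of the comparison 2-cells. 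The unit comparison $j\to GU(i)=i\circ j$ is the inverse of the unit isomorphism $i\circ j\cong j$ of the duoidal category, hence invertible; so everything reduces to the binary comparison.

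Recalling that $GU(q)=q\circ j$ carries the cofree $j$-coaction, I would take as binary comparison the natural 2-cell
$$
GU(q)\bullet GU(q')=(q\circ j)\bullet(q'\circ j)\longrightarrow(q\circ q')\circ j=GU(q\circ q'),
$$
assembled from the interchange $\xi$, the map $\xi_0$, the two module actions, and the unit isomorphisms of $\cd$, in such a way that on free modules it reduces to the canonical comparison of Remark~\ref{rmk:double}. That this 2-cell is a morphism of $j$-comodules, and that the resulting data obey the associativity and unit axioms of a lax monoidal functor, are routine consequences of the coherence of the duoidal structure together with naturality and pseudo-naturality of $\xi$; I would not write these out.

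The substantive point is invertibility of the binary comparison, where the naturally Frobenius hypothesis enters exactly as in Remark~\ref{rmk:double}. On free modules $q=x\bullet i$ and $q'=x'\bullet i$ the comparison becomes, on underlying 1-cells, the canonical map
$$
\big((x\bullet i)\circ j\big)\bullet\big((x'\bullet i)\circ j\big)\longrightarrow(x\bullet i)\circ\big((x'\bullet i)\circ j\big)
$$
of Remark~\ref{rmk:double} (with $a=x$ and $b=(x'\bullet i)\circ j$), which is invertible precisely because $(M,m,u)$ is naturally Frobenius. To pass to arbitrary modules without assuming that any colimits exist in \cm, I would use that every $i$-module is the canonical coequalizer of free modules, a coequalizer that becomes split, hence absolute, on applying the forgetful functor $U$. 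Composing either side of the comparison with the conservative $V$ yields a functor that factors as $U$ followed by an endofunctor of $\cd$; since $U$ turns this presentation into a split coequalizer and endofunctors preserve split coequalizers, both composites carry the presentation to a coequalizer in $\cd$. As the comparison is invertible on free modules and $V$ is conservative, reducing each variable in turn shows it invertible for all modules. Therefore $GU$ is strong monoidal, and the equivalence of Proposition~\ref{prop:trivial-cotrivial} is a strong monoidal equivalence. The main obstacle is the bookkeeping that identifies the abstractly defined comparison with the map of Remark~\ref{rmk:double} on free modules; once this is done, both the invertibility and the reduction to all modules follow at once.
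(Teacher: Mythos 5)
Your proof is correct, but it runs in the mirror-image direction to the paper's and justifies invertibility differently. The paper equips $FV\colon\cm(M,M)^j\to\cm(M,M)_i$ with the strong monoidal structure whose binary part is the instance of the 2-cell \eqref{eq:left_can} for the $\bullet$-trivial bimonoid $j$ (with $x=i$, $q=p'\bullet i$), and gets invertibility for \emph{arbitrary} comodules in one stroke from Example~\ref{ex:trivial-antipode} and Theorem~\ref{thm:coGalois}; the coherence and $i$-module-morphism checks are then done explicitly (Figure~\ref{fig:monoidal_equivalence}). You instead put the structure on $GU$, whose binary comparison $(q\circ j)\bullet(q'\circ j)\to (q\circ q')\circ j$ is precisely the instance of \eqref{eq:right_can} for the $\circ$-trivial bimonoid $i$ (take the $i$-module $q$, the left $i$-comodule $p=q'\circ j$ with trivial coaction, and $x=j$); had you cited Theorem~\ref{thm:Galois}(a)$\Rightarrow$(d) together with Example~\ref{ex:trivial-antipode} and Theorem~\ref{thm:Hopf}, you would get invertibility for all modules directly and could drop the free-module reduction entirely. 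As written, you lean on Remark~\ref{rmk:double}, which the paper asserts without proof and which only covers free modules, and then extend by the canonical $U$-split-coequalizer presentation; that argument is sound (split coequalizers are absolute, both sides of the comparison factor through $U$, and $V$ is conservative), but it is a detour, and it makes your proof depend on an unproved remark rather than on the theorems the paper has actually established. The transfer of the strong monoidal structure from $GU$ to its quasi-inverse $FV$ is standard and fine, and deferring the comodule-morphism and coherence verifications is reasonable, since they are exactly the routine diagrams the paper chooses to display.
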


\begin{proof}
We claim that a strong monoidal structure on the functor $\cm(M,M)^j \to
\cm(M,M)_i$ in Proposition \ref{prop:trivial-cotrivial} is given by the
evident nullary part $j\bullet i =i$ and the binary part obtained from the
2-cell \eqref{eq:left_can} for the $\bullet$-trivial bicomonad $j$,
substituting $x=i$ and $q=p'\bullet i$ for any $j$-comodule $p'$
(with trivial $j$-action $j\bullet p'\bullet i=p'\bullet i$). The resulting
2-cell
$$
\xymatrix{
p\bullet p'\bullet i \ar[r]^-{\rho\bullet1\bullet1} & 
(p\circ j)\bullet p'\bullet i \ar@{=}[r] &
(p\circ j)\bullet(i\circ(p'\bullet i)) \ar[r]^-{\xi} & 
(p\bullet i)\circ(p'\bullet i) }
$$
is clearly natural in both $j$-comodules $p$ and $p'$. It is an isomorphism by
Example~\ref{ex:trivial-antipode} and Theorem~\ref{thm:coGalois}.
It is a morphism of $i$-modules by associativity and naturality of
$\xi$, by functoriality of $\bullet$, and by counitality of $\xi_0$; see the
first diagram of Figure~\ref{fig:monoidal_equivalence}. Unitality of the
monoidal structure holds since both
$$
\xymatrix@C=18pt{
p\bullet i =
j\bullet p\bullet i \ar[r]^-{\xi^0 \bullet 1 \bullet 1} &
(j\circ j) \bullet p\bullet i =
(j\circ j) \bullet (i\circ(p\bullet i)) \ar[r]^-\xi &
(j\bullet i)\circ (j \bullet p\bullet i)  =
p\bullet i}
$$
and
$$
\xymatrix@C=18pt{
p\bullet i \ar[r]^-{\varrho \bullet 1} &
(p\circ j)\bullet i  \ar@/_2pc/[rrr]_-{(1\circ \xi^0_0)\bullet 1}
\ar@{=}[r] &
(p\circ j)\bullet (i \circ i) \ar[r]^-\xi &
(p\bullet i)\circ (j\bullet i) \ar@{=}[r] &
(p\circ i)\bullet i \ar@{=}[r] & p\bullet i}
$$
are equal to the identity 2-cell by the unitality of the monoidal structure
$(\xi,\xi^0)$ and by the counitality of $\varrho$, respectively.
Associativity of the monoidal structure follows by commutativity of the
second diagram of Figure \ref{fig:monoidal_equivalence}.
Its various regions commute by the (co)associativity and (co)unitality
properties of $\xi$ and its naturality, and by the coassociativity of
$\varrho$. 
\begin{figure}
\centering
\begin{sideways}
\scalebox{.75}{ 
\xymatrix{
p\bullet p'\bullet i\bullet i \ar@{=}[r] \ar@{=}[dd] & 
p\bullet(i\circ(p'\bullet i))\bullet i \ar@{=}[d] \ar[r]^{\rho\bullet1\bullet 1} & 
(p\circ j)\bullet(i\circ(p'\bullet i))\bullet i \ar@{=}[d] \ar[r]^{\xi\bullet1} & 
((p\bullet i)\circ(p'\bullet i))\bullet i \ar@{=}[d] \\
& 
p\bullet (i\circ(p'\bullet i))\bullet ( i\circ i) 
\ar[r]^{\rho\bullet1\bullet1} \ar[d]_{1\bullet\xi} & 
(p\circ j)\bullet(i\circ(p'\bullet i))\bullet(i\circ i) \ar[r]^{\xi\bullet 1} 
\ar[d]_{1\bullet\xi} & 
((p\bullet i)\circ(p'\bullet i))\bullet(i\circ i) \ar[d]^{\xi} \\
p\bullet p'\bullet i\bullet i \ar[d]_{1\bullet 1\bullet \xi_0} &  
p\bullet((i\bullet i)\circ(p'\bullet i\bullet i)) 
\ar[l]_-{1\bullet(\xi_0\circ1)} \ar[d]_{1\bullet(\xi_0\circ(1\bullet\xi_0))} & 
(p\circ j)\bullet((i\bullet i)\circ(p'\bullet i\bullet i)) \ar[r]^{\xi} 
\ar[d]_{1\bullet(\xi_0\circ(1\bullet\xi_0))} & 
(p\bullet i\bullet i)\circ(p'\bullet i\bullet i) 
\ar[d]^{(1\bullet\xi_0)\circ(1\bullet\xi_0)} \\
p\bullet p'\bullet i \ar@{=}[r] & p\bullet(i\circ(p'\bullet i)) 
\ar[r]^{\rho\bullet1} & 
(p\circ j)\bullet(i\circ(p'\bullet i)) \ar[r]^{\xi} & 
(p\bullet i)\circ(p'\bullet i) } }
\end{sideways}\quad\quad 
\begin{sideways}
\scalebox{0.75}{
\xymatrix{
p\bullet p' \bullet p'' \bullet i \ar[r]^-{\varrho\bullet 1 \bullet 1\bullet 1}
\ar[d]_-{1\bullet \varrho'\bullet 1\bullet 1} &
(p\circ j)\bullet p' \bullet p'' \bullet i\ar[rr]^-\xi
\ar[d]^-{1\bullet \varrho'\bullet 1\bullet 1} &&
(p\bullet i) \circ (p' \bullet p'' \bullet i ) 
\ar[d]^-{1\circ (\varrho'\bullet 1\bullet 1)}\\
p\bullet (p'\circ j)\bullet p'' \bullet i 
\ar[r]^-{\varrho\bullet 1 \bullet 1\bullet 1}
\ar[d]_-{\varrho\bullet 1 \bullet 1\bullet 1} &
(p\circ j)\bullet (p'\circ j)\bullet p'' \bullet i \ar[rr]^-\xi
\ar[rd]^-{1\bullet \xi} \ar[d]^-{(1\circ \xi^0) \bullet 1 \bullet 1\bullet 1}
&&
(p\bullet i)\circ ((p'\circ j)\bullet p'' \bullet i )
\ar[d]^-{1 \circ \xi}\\
(p\circ j)\bullet (p'\circ j)\bullet p'' \bullet i 
\ar[r]^-{(\varrho\circ1)\bullet 1 \bullet 1\bullet 1}
\ar[d]_-{\xi \bullet 1 \bullet 1} &
(p\circ j\circ j)\bullet (p'\circ j)\bullet (p'' \bullet i)
\ar[rd]^-{1\bullet \xi} \ar[d]^-{\xi\bullet 1} &
(p\circ j)\bullet((p'\bullet i) \circ (p''\bullet i)) \ar[r]^-\xi
\ar[d]^-{(1\circ \xi^0) \bullet 1} &
(p \bullet i )\circ (p'\bullet i) \circ (p''\bullet i)
\ar[d]^-{1\circ(\xi^0 \bullet 1)}\ar@{=}@/^8pc/[dd]\\
((p\bullet p')\circ j)\bullet p'' \bullet i \ar[d]_-\xi &
(((p\circ j)\bullet p')\circ j)\bullet(p''\bullet i) \ar[d]^-\xi & 
(p\circ j\circ j)\bullet ((p' \bullet i)\circ (p''\bullet i)) \ar[r]^-\xi 
\ar[d]^-\xi &
(p\bullet i) \circ ((j\circ j)\bullet ((p' \bullet i)\circ (p''\bullet i))) 
\ar[d]^-{1\circ\xi}\\
(p\bullet p' \bullet i)\circ (p''\bullet i)
\ar[r]_-{(\varrho\bullet 1 \bullet 1)\circ1} &
((p\circ j)\bullet p'\bullet i)\circ(p''\bullet i) \ar@{=}[r] & 
((p\circ j)\bullet p'\bullet i)\circ(p''\bullet i) \ar[r]_-{\xi\circ 1} &
(p \bullet i)\circ (p'\bullet i)\circ (p''\bullet i)}}
\end{sideways}
\caption{Diagrams for the proof of the monoidality of the equivalence
  $\cm(M,M)^j\simeq \cm(M,M)_i\rev$ }
\label{fig:monoidal_equivalence}
\end{figure}
\end{proof}

\subsection{Fundamental theorem of Hopf modules}

In \cite{BohmChenZhang}, a formulation of the fundamental theorem of Hopf 
modules was given in the context of duoidal categories. If \cd is a duoidal
category, and $a$ is a bimonoid in \cd, then the category $\cd^a$ of
(say, right) $a$-comodules is monoidal with respect to $\bullet$,
and the cofree comodule $(a,\delta)$ is a monoid in the monoidal category
$\cd^a$; thus we may consider the category $\cd^a_a$ of  (say, right)
$a$-modules in $\cd^a$. There is a canonical comparison 
functor $K\colon \cd^j\to \cd^a_a$ sending a $j$-comodule $(p,\rho\colon p\to
p\circ j)$ to the free $a$-module $p\bullet a$, equipped with $a$-comodule
structure 
$$ 
\xymatrix{
p\bullet a \ar[r]^-{\rho\bullet\delta} & 
(p\circ j)\bullet(a\circ a) \ar[r]^-{\xi} & 
(p\bullet a)\circ(j\bullet a) \ar@{=}[r] & (p\bullet a)\circ a }
$$
with the obvious action on morphisms.  {\em The fundamental theorem of Hopf
modules for $a$} is the assertion that this functor $K$ is an equivalence of
categories.

Theorem~3.11 of \cite{BohmChenZhang} includes the assertion that if 
idempotent morphisms in $\mathcal D$ split and  the functor $FV\colon
\cd^j\to\cd_i$ of \eqref{eq:i-j_adjunction}  is fully faithful, then the
fundamental theorem for $a$ holds if and only if the Galois map
$\beta_{q,j}\colon(q\circ j)\bullet a\to  q\circ a$ in \eqref{eq:beta} is
invertible for every $a$-module $q$.

Since we saw in Proposition~\ref{prop:trivial-cotrivial} that for a duoidal
category $\cm(M,M)$ arising from a naturally Frobenius map-monoidale
$(M,m,u)$, the adjunction $FV\dashv GU$ is in fact an equivalence, we may
apply \cite[Theorem~3.11]{BohmChenZhang} to deduce: 

\begin{theorem}\label{thm:fthm}
If $(M,m,u)$ is a naturally Frobenius map-monoidale in a monoidal bicategory
\cm  in which idempotent 2-cells split, and $a$ is a
monoidal comonad on $(M,m,u)$, then the following conditions are equivalent: 
\begin{enumerate}[(a)]
\item the Galois maps $\beta_{q,j}\colon (q\circ j)\bullet a\to q\circ a$ of
  \eqref{eq:beta} are invertible for every $a$-module $q$;  
\item the fundamental theorem of Hopf modules for $a$ holds, in the sense that
  the functor $K\colon\cm(M,M)^j\to\cm(M,M)^a_a$ is an equivalence. 
\end{enumerate}
\end{theorem}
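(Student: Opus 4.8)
The plan is to apply \cite[Theorem~3.11]{BohmChenZhang} directly to the duoidal category $\cd=\cm(M,M)$, so the substantive work is simply to verify that the two hypotheses of that external result hold in our setting. That result requires, first, that idempotent morphisms in $\cd$ split and, second, that the functor $FV\colon\cd^j\to\cd_i$ of \eqref{eq:i-j_adjunction} be fully faithful; under these hypotheses it asserts precisely the equivalence of our conditions (a) and (b).

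The first hypothesis I would dispose of immediately. The morphisms of $\cd=\cm(M,M)$ are by definition the 2-cells of \cm between 1-cells $M\to M$, so an idempotent morphism in $\cd$ is the same thing as an idempotent 2-cell in \cm. The standing assumption of the theorem --- that idempotent 2-cells in \cm split --- is therefore exactly the statement that idempotent morphisms in $\cd$ split.

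For the second hypothesis I would invoke Proposition~\ref{prop:trivial-cotrivial}: since $(M,m,u)$ is naturally Frobenius, the composite adjunction $FV\dashv GU$ assembled from \eqref{eq:i-j_adjunction} is an equivalence of categories $\cm(M,M)^j\simeq\cm(M,M)_i$. In particular $FV$ is fully faithful, which is all that \cite[Theorem~3.11]{BohmChenZhang} demands. With both hypotheses in place, the cited theorem yields directly that invertibility of the Galois maps $\beta_{q,j}$ for every $a$-module $q$ is equivalent to $K\colon\cm(M,M)^j\to\cm(M,M)^a_a$ being an equivalence, which is the assertion to be proved.

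The only point demanding care --- rather than a genuine obstacle --- is the bookkeeping needed to confirm that the objects named in \cite{BohmChenZhang} coincide with ours: that the $j$-comodules, $i$-modules, the Galois map $\beta_{q,j}$, and the comparison functor $K$ appearing there are precisely the instances of the constructions of Sections~\ref{sec:Galois}--\ref{sec:coGalois} specialised to $\cd=\cm(M,M)$. These identifications have all been set up in the discussion preceding the statement, so no further computation is required and the proof reduces to the two verifications above.
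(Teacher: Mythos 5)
Your proposal is correct and follows exactly the paper's own argument: both reduce the statement to \cite[Theorem~3.11]{BohmChenZhang}, checking that idempotents in $\cm(M,M)$ split because idempotent 2-cells in \cm do, and that $FV$ is fully faithful because Proposition~\ref{prop:trivial-cotrivial} shows $FV\dashv GU$ is an equivalence. The only (harmless) overstatement is calling the splitting hypotheses ``exactly'' the same --- idempotent morphisms of $\cm(M,M)$ are a special case of idempotent 2-cells of \cm, so the standing assumption implies, rather than equals, what is needed.
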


Since the authors of \cite{BohmChenZhang} work with general duoidal
categories, they have an extra duality which is not available to us: the
opposite $\cd\op$ of a duoidal category \cd is also duoidal, with the roles of
$\circ$ and $\bullet$ interchanged, but the notion of bimonoid unchanged. The
{\em dual fundamental theorem of Hopf modules for $a$} is the assertion that
the comparison  functor $K'\colon \cd_i\to\cd^a_a$, sending an
$i$-module $(q,\gamma)$ to the cofree $a$-comodule $q\circ a$, equipped with
$a$-module structure
$$
\xymatrix{
(q\circ a)\bullet a \ar@{=}[r] & 
(q\circ a)\bullet(i\circ a) \ar[r]^-{\xi} & 
(q\bullet i)\circ(a \bullet a) \ar[r]^-{\gamma\circ\mu} & 
q\circ a, }
$$
is an equivalence of categories. The dual of
\cite[Theorem~3.11]{BohmChenZhang}, formulated explicitly as their 
Theorem~3.14, states that if  idempotent morphisms in $\mathcal D$ split
and $GU\colon\cd_i\to\cd^j$ is fully faithful, then
the dual fundamental theorem for $a$ holds if and only if the co-Galois map
$\zeta_{p,i}\colon p\bullet a\to (p\bullet i)\circ a$ of \eqref{eq:zeta} is
invertible for every comodule $p$.  

Once again, in our context $GU$ is an equivalence by
Proposition~\ref{prop:trivial-cotrivial}, and so we deduce: 

\begin{theorem}\label{thm:dualfthm}
If $(M,m,u)$ is a naturally Frobenius map-monoidale in a monoidal bicategory
\cm  in which idempotent 2-cells split, and $a$ is a
monoidal comonad on $(M,m,u)$, then the following conditions are equivalent: 
\begin{enumerate}[(a)]
\item the co-Galois maps $\zeta_{p,i}\colon p\bullet a\to
(p\bullet i)\circ a$ of \eqref{eq:zeta} are invertible for every $a$-comodule
  $p$;
\item the dual fundamental theorem of Hopf modules for $a$ holds, in the sense
  that the functor $K'\colon\cm(M,M)_i\to\cm(M,M)^a_a$ is an equivalence. 
\end{enumerate}
\end{theorem}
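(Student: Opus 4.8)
The plan is to deduce this exactly as Theorem~\ref{thm:fthm} was deduced, but from the dual fundamental theorem of Hopf modules for duoidal categories, \cite[Theorem~3.14]{BohmChenZhang}. That result asserts, for an arbitrary duoidal category \cd and bimonoid $a$ in it, the equivalence of conditions (a) and (b) above, under two standing hypotheses: that idempotent morphisms in \cd split, and that the composite functor $GU\colon\cd_i\to\cd^j$ of \eqref{eq:i-j_adjunction} is fully faithful.

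The first step is to specialize to $\cd=\cm(M,M)$ and check these two hypotheses. Since \cd is a hom-category of \cm, an idempotent morphism in \cd is precisely an idempotent 2-cell in \cm, and these split by assumption. For the full faithfulness of $GU$, I would appeal to Proposition~\ref{prop:trivial-cotrivial}, which shows that for a naturally Frobenius map-monoidale the composite adjunction $FV\dashv GU$ of \eqref{eq:i-j_adjunction} is an equivalence; in particular $GU$ is fully faithful. With both hypotheses in hand, \cite[Theorem~3.14]{BohmChenZhang} gives the desired equivalence of (a) and (b) directly.

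I expect no substantive obstacle: the whole argument reduces to verifying the two hypotheses of the cited theorem, and the more delicate of these---full faithfulness of $GU$---has already been established in Proposition~\ref{prop:trivial-cotrivial}. The only point requiring a moment's attention is to confirm that the comparison functor $K'$ and the cofree-comodule $a$-module structure on $q\circ a$ displayed before the statement match the formulations in \cite{BohmChenZhang} under the self-duality $\cd\mapsto\cd\op$ that interchanges $\circ$ and $\bullet$ while leaving the notion of bimonoid unchanged; this is a routine comparison of the explicit formulas for $K'$ and for the co-Galois map $\zeta_{p,i}$ of \eqref{eq:zeta}.
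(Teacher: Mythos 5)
Your proposal is correct and matches the paper's own argument exactly: the theorem is deduced by applying \cite[Theorem~3.14]{BohmChenZhang}, with the idempotent-splitting hypothesis holding by assumption and the full faithfulness of $GU$ following from the equivalence established in Proposition~\ref{prop:trivial-cotrivial}. No further comment is needed.
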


\subsection{Summary}

In this brief section we combine all the various results about Hopf-like
conditions into a single statement. 

\begin{theorem}\label{thm:summary}
Let $(M,m,u)$ be a naturally Frobenius map-monoidale in monoidal
bicategory \cm in which idempotent 2-cells split, and let $a$ be a monoidal
comonad on $(M,m,u)$. If the object $M$ is well-pointed (equivalently,
well-copointed) then the following conditions are equivalent:   
\begin{enumerate}[(a)]
\item $a$ has an antipode $\ant\colon a\to a^-$ in the sense of
  Theorem~\ref{thm:antipode}; 
\item the Hopf map $\hat{\beta}$ of \eqref{eq:beta-hat} is invertible; 
\item the co-Hopf map $\hat{\zeta}$ of \eqref{eq:zeta-hat} is invertible; 
\item the Galois maps $\beta_{q,x}\colon (q\circ x)\bullet a\to q\circ (x\bullet
  a)$ of \eqref{eq:beta} are invertible for every 1-cell $x$ and every
  module $q$;
\item the Galois maps $\beta_{q,j}\colon (q\circ j)\bullet a\to q\circ a$ are
  invertible for every module $q$; 
\item the co-Galois maps $\zeta_{p,x}\colon p\bullet(x\circ a)\to
  (p\bullet x)\circ a$ of \eqref{eq:zeta} are invertible for every 1-cell $x$
  and every comodule $p$;
\item the co-Galois maps $\zeta_{p,i}\colon p\bullet a\to
  (p\bullet i)\circ a$ are invertible for every comodule $p$;
\item the fundamental theorem for Hopf modules holds for $a$, in the sense of
  Theorem~\ref{thm:fthm}; 
\item the dual fundamental theorem for Hopf modules holds for $a$, in the
  sense of Theorem~\ref{thm:dualfthm}. 
\end{enumerate}
\end{theorem}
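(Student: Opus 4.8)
The plan is to treat this as an assembly result: every one of the required equivalences has already been proved in one of the earlier theorems, so the task reduces to recording which theorem supplies which link and checking that the links chain together into a single connected web on the nine conditions (a)--(i), under the standing hypotheses. Concretely, I would read off the following. Theorem~\ref{thm:Hopf} supplies the equivalence of the existence of an antipode (a), the invertibility of the Hopf map $\hat\beta$ of \eqref{eq:beta-hat} (b), and the invertibility of the co-Hopf map $\hat\zeta$ of \eqref{eq:zeta-hat} (c). Assuming well-pointedness, Theorem~\ref{thm:coGalois} equates (b) with the two co-Galois conditions (f) and (g), while Theorem~\ref{thm:Galois} equates (c) with the two Galois conditions (d) and (e). Finally, assuming that idempotent 2-cells split, Theorem~\ref{thm:fthm} equates the Galois condition (e) with the fundamental theorem (h), and Theorem~\ref{thm:dualfthm} equates the co-Galois condition (g) with the dual fundamental theorem (i).

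Putting these together, I obtain, on the one side,
$$\text{(h)} \Leftrightarrow \text{(e)} \Leftrightarrow \text{(d)} \Leftrightarrow \text{(c)} \Leftrightarrow \text{(b)} \Leftrightarrow \text{(a)}$$
and, on the other side,
$$\text{(b)} \Leftrightarrow \text{(f)} \Leftrightarrow \text{(g)} \Leftrightarrow \text{(i)}.$$
Since condition (b) occurs in both displays, the two halves are joined at (b), and every one of the nine conditions is thereby linked to (b); hence all nine are equivalent. I would stress that the genuine structural content here is that Theorem~\ref{thm:Hopf} is the \emph{hinge}: the Galois family naturally clusters around $\hat\zeta$ and the co-Galois family around $\hat\beta$, and it is precisely the equivalence (b)$\Leftrightarrow$(c) of Theorem~\ref{thm:Hopf} (invertibility of $\hat\beta$ iff invertibility of $\hat\zeta$) that welds these two otherwise-disjoint clusters into a single equivalence class. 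Without that hinge the nine conditions would fall into two unconnected components.

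The only point requiring real care, though it is not hard, is the reconciliation of hypotheses. The present statement assumes $M$ is well-pointed, whereas the implication that closes the loop in Theorem~\ref{thm:Galois} was phrased in terms of well-copointedness; I would invoke the observation recorded just after Theorem~\ref{thm:coGalois} that, for a naturally Frobenius map-monoidale, well-pointedness and well-copointedness coincide (via the passage $v\mapsto v^+$), so the two formulations agree and either may be used. I would likewise check that the remaining hypotheses are covered: Theorem~\ref{thm:Hopf} needs only the naturally Frobenius condition; Theorems~\ref{thm:coGalois} and~\ref{thm:Galois} additionally require well-pointedness for their \emph{full} equivalences; and Theorems~\ref{thm:fthm} and~\ref{thm:dualfthm} additionally require the splitting of idempotent 2-cells — all of which are among the standing assumptions. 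The ``main obstacle'' is therefore purely organisational rather than mathematical: verifying that the hypotheses of each cited theorem are met and that the hinge (b)$\Leftrightarrow$(c) is correctly deployed to connect the two families.
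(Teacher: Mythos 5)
Your proposal is correct and is exactly the argument the paper intends: the summary theorem is proved by assembling Theorems~\ref{thm:Hopf}, \ref{thm:coGalois}, \ref{thm:Galois}, \ref{thm:fthm}, and \ref{thm:dualfthm}, with the equivalence of well-pointedness and well-copointedness for a naturally Frobenius map-monoidale reconciling the hypotheses, just as you describe. Your bookkeeping of which earlier theorem supplies which link, and the observation that the two clusters are welded together by the equivalence of (b) and (c) from Theorem~\ref{thm:Hopf}, matches the paper's structure.
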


\section{Back to the examples}

The aim of this final section is to draw conclusions from Theorem
\ref{thm:summary} in the examples of Section \ref{sec:examples}.

\subsection{Hopf algebras in braided monoidal categories}

Applying Theorem \ref{thm:summary} to a monoidal comonad in Example
\ref{ex:braided_bialg}; that is, to a bialgebra in a braided monoidal
category, we obtain a variant of Theorem 3.6 in \cite{Vercruysse}. 

\subsection{Groupoids}
Let us apply Theorem \ref{thm:summary} to the monoidal comonad in Example
\ref{ex:category}; that is, to a small category $a$. Then $a^-$ is the
opposite category $a^{\op}$ and the `antipode' in part (a) of Theorem
\ref{thm:summary} is the same as the inverse operation $a\to a^{\op}$. That is
to say, part (a) of Theorem \ref{thm:summary} asserts that $a$ is a {\em
groupoid}. Thus Theorem \ref{thm:summary} provides a generalization, and an
alternative proof, of Corollary 4.6 in 
\cite{BohmChenZhang}. 

\subsection{Hopf algebroids}
Next we apply Theorem \ref{thm:summary} to the monoidal comonad in
Example \ref{ex:bialgebroid}; that is, to a bialgebroid $a$ over a commutative
algebra $R$ (such that the source and target maps land in the center of
$a$). Then $a^-$ is the opposite $R$-bimodule (whose actions are
obtained interchanging  the left and right actions on $a$), and the
`antipode' in part (a) of Theorem \ref{thm:summary} is the same as the
antipode in the sense of \cite{Ravenel} (see also \cite{Bohm:HoA} and the
references therein). That is to say, part (a) of Theorem \ref{thm:summary}
asserts that $a$ is a {\em Hopf algebroid}. Thus Theorem \ref{thm:summary}
provides a generalization, and an alternative proof, of Corollary 4.10 in
\cite{BohmChenZhang}. 

\subsection{Weak Hopf algebras} 
Next, we apply Theorem \ref{thm:summary} to the monoidal comonad in 
Example \ref{ex:weak-bialgebra}; that is, to a weak bialgebra $a$ with
separable Frobenius base algebra $R$. Then the $R^{\mathrm{op}} \ox
R$-bimodule $a^-$ lives on the same vector space $a$ but the actions are
twisted (with the help of the Nakayama automorphism of $R$). The `antipode' in
part (a) of Theorem \ref{thm:summary} is the same as the antipode in the sense
of \cite{BNSz:WHAI}. However, the Hopf modules appearing in parts (h) and (i)
of Theorem  \ref{thm:summary} are different from the Hopf modules discussed in
\cite{BNSz:WHAI}. Hence the characterizations of weak Hopf algebras given  
in Theorem~\ref{thm:summary} (h) and (i) are not literally the same as
those in \cite{BNSz:WHAI}, although each can be deduced from the other.  

\subsection{Brugui\`eres-Virelizier antipode}

We are grateful to Ignacio L\'opez-Franco for the suggestion that we compare 
our antipodes to those introduced by Brugui\`eres and Virelizier in 
\cite[Section~3.3]{BV}. This section is the result of that suggestion. 

Let $(M,\ox,i)$ be a monoidal category with left and right
duals. For an object $x$, we write $\ld{x}$ for the left dual of $x$, which
includes morphisms $e_x\colon\ld{x}\ox x\to i$ and $d_x\colon i\to x\ox
\ld{x}$ satisfying the triangle equations. 

For a functor $f\colon M\to M$, seen as a profunctor, the mate
$f^-$ of Section~\ref{sec:duality&map-monoidales}  is given by  
\begin{align*}
f^-(y,x) &\cong \int^{z \in M} 
M(z \otimes y ,i) \x M(i, x \otimes f(z )  )  \\ 
&\cong \int^{z \in M} 
M(z,\ld{y})\x M(\ld{x},f(z)) 
\cong M(\ld{x},f(\ld{y}))
\end{align*}
and so to give a 2-cell $f\to f^-$  in \Prof  is to
give maps $M(y,f (  x ))\to M(\ld{x},f(\ld{y}))$,
natural in $x$ and $y$; or equivalently to give morphisms 
$\ld{x}\to f(\ld{f(x)})$ natural in $x$.

Now suppose that $a$ is a monoidal comonad on $M$, seen as a bimonoid in
$\Prof(M,M)$, and that $s\colon\ld{x}\to a(\ld{a(x)})$ determines a 
2-cell  $\ant\colon a\to a^-$  in \Prof. The two axioms for
$\ant$ to be an antipode say that the diagrams 
\begin{equation}\label{eq:BV_antipode}
\xymatrix{
 i \ar[r]^-{d_{a(x)}} \ar[d]_-{a_0} & a(x)\ox\ld{a(x)} \ar[r]^-{1\ox s_{a(x)}} & 
 a(x)\ox a(\ld{a^2(x)}) \ar[r]^-{1\ox a(\ld{{\delta_{x}}})} & 
 a(x)\ox a(\ld{a(x)}) \ar[d]^-{a_2} \\
 a(i)   \ar[r]_-{a(d_x)} & a(x\ox\ld{x}) 
 \ar[rr]_-{a(1\ox\ld{{\epsilon_{x}}})} && a(x\ox\ld{a(x)}) 
\\
a(x) \ar[r]^-{d_x\ox 1} \ar[d]_-{\epsilon_{x}} & 
x \ox  \ld{x}  \ox a(x) 
\ar[r]^-{1  \ox s_{x} \ox \delta_{ x}} &  
x \ox  a(\ld{a(x)}) \ox a^2(x) 
\ar[r]^-{1 \ox a_2} & 
x \ox a(\ld{a(x)} \ox a(x)) 
\ar[d]^-{1 \ox a(e_{a(x)})} \\
x \ar[r]_-{ \cong} & x \ox i \ar[rr]_-{1 \ox a_0} &&  
x \ox  a(i) 
}
\end{equation}
commute. The second of these is equivalent, via the duality $\ld{x}\dashv x$,
to commutativity of the diagram  
$$\xymatrix{
\ld{x} \ox a(x) 
\ar[r]^-{s_{ x} \ox \delta_{ x}} 
\ar[d]_-{1 \ox \epsilon_{ x}} & 
a(\ld{a(x)}) \ox a^2(x) \ar[r]^-{a_2} & 
a(\ld{a(x)} \ox a(x)) \ar[d]^-{a(e_{a(x)})} \\
\ld{x} \ox x \ar[r]_-{e_x}  & i \ar[r]_-{a_0} & a(i). }$$

Now a monoidal comonad on $M$ can be seen as an opmonoidal monad on $M\op$ or
$M\oprev$, and the first axiom in \eqref{eq:BV_antipode} together with
the reformulation of the second axiom given above shows that $a$ has
an antipode as in part (a) of Theorem \ref{thm:summary} if and only if the 
corresponding opmonoidal monad on $M\oprev$ has a left antipode in the sense 
of Brugui\`eres and Virelizier. Thus our Theorem~\ref{thm:antipode} 
is a generalization of \cite[Theorem~3.10]{BLV:HopfMonad}.

Brugui\`eres and Virelizier also prove a fundamental theorem of Hopf modules
\cite[Theorem~4.6]{BV}, but this seems to be a different theorem. 
\appendix
\section{Duoidal structure of the duality functor} 
\label{app:Xi-Upsilon}

For the convenience of the reader, in this appendix we record the duoidal
structure of the functor $(-)^-:\cm(M,M) \to \cm\oprev (M,M)$ for a naturally
Frobenius map-monoidale $M$ in a monoidal bicategory \cm; see
Section~\ref{sec:duality&map-monoidales}.

In the case of the monoidal structure involving $\circ$, we write
$\Xi=\Xi_{g,f}\colon f^-\circ g^-\cong (g \circ f)^-$ and $\Xi_0\colon i\cong
i^-$ for the structure maps. Explicitly, they are given by the pasting
composites  
$$\xymatrix{
M \ar[r]^{1u} \ar[d]_{1u} & 
M^2 \ar[r]^{1 m^*} & 
M^3 \ar[r]^{1f1} \ar[d]_{111u}& 
M^3 \ar[r]^{m1} & 
M^2 \ar[r]^{u^*1} & 
M \ar[d]^{1u}  \\
M^2 \ar[d]_{1m^*} && 
M^4 \ar[d]_{111 m^*} &&& 
M^2 \ar[d]^{1 m^*} \\
M^3 \ar[d]_{1g1} && 
M^5 \ar[d]_{111g1} &&& 
M^3 \ar[d]^{1g1} \\
M^3 \ar[r]^{1u11} \ar@{=}[dr] & 
M^4 \ar[d]^{1m1} \ar[r]^{1m^*11}_{~}="1" & 
M^5 \ar[r]^{1f111} \ar[d]^{11m1} & 
M^5 \ar[r]^{m111} & 
M^4 \ar[r]^{u^* 111} & 
M^3 \ar[d]^{m1} \\
& M^3 \ar[r]_{1m^*1}^{~}="2" \ar@{=}[dr] & M^4 
\ar[d]^{11u^*1} &  
& 
& M^2 \ar[d]^{u^* 1} \\
&& M^3 \ar[r]_{1f1} & 
M^3 \ar[r]_{m1} & 
M^2 \ar[r]_{u^*1} & M 
\ar@{=>}"1";"2"^{\pi'}  
}
\xymatrix{
M \ar[r]^{1u} \ar@{=}[dr] & M^2 \ar[r]^{1m^*}_{~}="1" \ar[d]^{m} & 
M^3 \ar[d]^{m1} \\
& M \ar[r]_{m^*}^{~}="2" \ar@{=}[dr] & M^2 \ar[d]^{u^*1} \\
&& M 
\ar@{=>}"2";"1"_{\pi^{-1}}
}$$
in which all the larger regions contain pseudonaturality isomorphisms, the
triangles contain unit/counit isomorphisms, and the squares a Frobenius
isomorphism $\pi'$ or $\pi^{-1}$. 

For the $\bullet$-product there is an isomorphism
$\Upsilon=\Upsilon_{f,g}\colon g^-\bullet f^-\cong(f\bullet g)^-$ which can be
constructed from the isomorphisms  
$$
\xymatrix{
M \ar[rr]^{m^*} \ar[d]_{1u} && 
M^2 \ar[rr]^{1u1u} && 
M^4 \ar[d]^{1m^*1m^*} \\
M^2 \ar[r]_{1m^*} &
M^3 \ar[r]^{m^*11} \ar[d]_{1f1} & 
M^4 \ar[r]^{1u111} & 
M^5 \ar[r]^{1m^*111} &
M^6  \ar[d]_{1111f1} \ar[drr]^{1g11f1} \\
&
M^3 \ar[r]^{m^*11}_{~}="1" \ar[d]_{m1} &
M^4 \ar[r]^{1u111} \ar[d]^{1m1} & 
M^5 \ar[r]^{1m^*111} & 
M^6 \ar[rr]^{1g1111} \ar[d]^{111m1} &&
M^6 \ar[dr]^{m1m1} \ar[dl]^{111m1} \\
&
M^2 \ar[r]^{~}="2"_{m^*1} \ar@{=}[dr] & 
M^3 \ar[d]^{1u^*1} && 
M^5 \ar[d]^{111u^*1}  & 
M^5 \ar[d]^{111u^*1} && 
M^4 \ar[d]^{u^*1u^*1} \\
&& M^2 \ar[r]^{1u1}\ar@{=}[dr]& 
M^3 \ar[r]^{1m^*1}_{~}="3" \ar[d]^{1m}  & 
M^4 \ar[d]^{11m} & 
M^4 \ar[d]^{11m} && 
M^2 \ar[d]^{m} \\
&&& M^2 \ar[r]_{1m^*}^{~}="4" & 
M^3 \ar[r]_{1g1} & 
M^3 \ar[r]_{m1}& 
M^2 \ar[r]_{u^*1} & M 
\ar@{=>}"1";"2"^{\pi'1}
\ar@{=>}"3";"4"^{1\pi'} }
$$
and 
$$
\xymatrix{
& M^3 \ar[r]^{1m^*1} & M^4\ar[r]_{1f11} \ar@/^1pc/[rr]^{1fg1} & 
M^4 \ar[r]_{11g1} \ar[dr]_{m11} & 
M^4 \ar[r]^{1m1} \ar[dr]_{m11}  & 
M^3 \ar[dr]^{m1} \\
M^2 \ar[ur]^{1m^*} \ar[r]_{1m^*} & 
M^3 \ar[r]_{1f1} \ar[ur]_{11m^*} & 
M^3 \ar[ur]_{11m^*} \ar[dr]_{m1} && 
M^3 \ar[r]_{1g1} & M^3 \ar[r]_{m1} & 
M^2 \ar[d]^{u^*1} \\
M \ar[u]^{1u} &&& 
M^2 \ar[ur]_{1m^*} &&& 
M }
$$
while the isomorphism $\Upsilon_0\colon j^{-}\cong j$ may be constructed as in
the diagram 
$$\xymatrix @R1pc {
&& M^3 \ar[dr]^{1u^*1} && 
M^3 \ar[dr]^{m1} \\
M \ar[r]^{1u} \ar@/_0.5pc/[drrr]_{u^*} & 
M^2 \ar[ur]^{1m^*} \ar@{=}[rr] && 
M^2 \ar[ur]^{1u1} \ar@{=}[rr] && 
M^2 \ar[r]^{u^*1} & M \\
&&& I \ar@/_0.5pc/[urrr]_{u} }
$$
using unit and counit isomorphisms for $M$ and pseudofunctoriality of the
tensor product in $\cm$.

\bibliographystyle{plain}

\end{document}